\documentclass[12pt, reqno]{amsart}
\pdfoutput=1

\usepackage[margin=2cm]{geometry}
\usepackage{amsmath, amssymb, amsfonts, amstext, verbatim, amsthm, mathrsfs}
\usepackage{microtype}
\usepackage[all,arc,knot,poly,cmtip]{xy}
\usepackage{mathtools}
\usepackage[colorlinks=true,linkcolor=blue,citecolor=blue,urlcolor=blue,citebordercolor={0 0 1},urlbordercolor={0 0 1},linkbordercolor={0 0 1}]{hyperref} 
\usepackage{enumerate}
\usepackage{stmaryrd}
\usepackage{subcaption}
\usepackage{tikz}
\usepackage{tikz-cd}
\usepackage{setspace}
\usepackage{graphicx}
\setstretch{1.4}

\theoremstyle{plain}
\newtheorem{theorem}{Theorem}[section]
\newtheorem{proposition}[theorem]{Proposition}
\newtheorem{lemma}[theorem]{Lemma}

\theoremstyle{definition}
\newtheorem{definition}[theorem]{Definition}

\DeclareMathOperator{\Hom}{\operatorname{Hom}}
\DeclareMathOperator{\Tr}{\operatorname{Tr}}
\DeclareMathOperator{\Sk}{\operatorname{Sk}}
\DeclareMathOperator{\Gr}{\operatorname{Gr}}
\DeclareMathOperator{\diag}{\operatorname{diag}}
\DeclareMathOperator{\End}{\operatorname{End}}
\DeclareMathOperator{\Mod}{\operatorname{Mod}}

\captionsetup[subfloat]{labelfont=normalfont}

\begin{document}

\title[Dualities of $K$-theoretic Coulomb branches]{Dualities of $K$-theoretic Coulomb branches \protect\\[0.4\baselineskip] from a once-punctured torus}

\author{Dylan G.L. Allegretti}
\author{Peng Shan}
\address{Yau Mathematical Sciences Center, Department of Mathematical Sciences, Tsinghua~University, 100084, Beijing, China}
\email{dylanallegretti@tsinghua.edu.cn}
\email{pengshan@tsinghua.edu.cn}

\date{}

\maketitle

\begin{abstract}
We consider the quantized $\mathrm{SL}_2$-character variety of a once-punctured torus. We show that this quantized algebra has three $\mathbb{Z}_2$-invariant subalgebras that are isomorphic to quantized $K$-theoretic Coulomb branches in the sense of Braverman, Finkelberg, and Nakajima. These subalgebras are permuted by the $\mathrm{SL}_2(\mathbb{Z})$ mapping class group action. Our results confirm various predictions from the physics literature about 4d~$\mathcal{N}=2^*$ theories and their dualities.
\end{abstract}

\setcounter{tocdepth}{1}
\tableofcontents

\section{Introduction}

Quantum field theory has long served as a source of new ideas and conjectures in mathematics. In the past decade in particular, the work of Gaiotto, Moore, and Neitzke~\cite{GMN13a} has inspired a number of exciting mathematical developments. Building on the seminal work of Gaiotto~\cite{G12}, these authors studied a class of four-dimensional quantum field theories, called $\mathcal{N}=2$ theories of class~S, whose properties are encoded in the geometry of a Riemann surface. Their work points to fascinating connections between Hitchin systems, cluster algebras, and the wall-crossing properties of Donaldson--Thomas invariants.

Associated to any $\mathcal{N}=2$ field theory is a space called the Coulomb branch of the theory on $\mathbb{R}^3\times S^1$. This space was given a precise mathematical definition in the work of Braverman, Finkelberg, and Nakajima~\cite{BFN18}. An interesting prediction of Gaiotto, Moore, and Neitzke is that, in the case of a class~S theory, this Coulomb branch should be related to a character variety of the surface that defines the theory. In previous work~\cite{AS24}, we considered the quantized $\mathrm{SL}_2$-character variety of a punctured surface. We formulated a precise relationship between this quantized character variety and a quantized Coulomb branch, confirming the prediction of Gaiotto, Moore, and Neitzke in some cases.

In the present paper, we consider the quantized $\mathrm{SL}_2$-character variety of a once-punctured torus. We describe three $\mathbb{Z}_2$-invariant subalgebras of this quantized algebra. Building on our earlier results~\cite{AS24}, we show that each of these subalgebras is isomorphic to a quantized Coulomb branch in the sense of Braverman, Finkelberg, and Nakajima~\cite{BFN18}. They are permuted by the $\mathrm{SL}_2(\mathbb{Z})$ mapping class group action. In physical language, these subalgebras are quantized Coulomb branches of the 4d~$\mathcal{N}=2^*$ theories from~\cite{DW96}. Our results confirm various predictions from the physics literature about these theories and their dualities~\cite{AST13,GMN13b,GKNPS23}.

\subsection{The quantized $\mathrm{SL}_2$-character variety of~$S_{1,1}$}

Let $S=S_{g,n}$ be the surface obtained from a closed surface~$\bar{S}$ of genus~$g$ by removing $n$ distinct points $\{p_1,\dots,p_n\}\subset\bar{S}$. The $\mathrm{SL}_2$-character variety of~$S$ is a moduli space parametrizing principal $\mathrm{SL}_2$-bundles with flat connection on~$S$. Concretely, it can be defined by considering representations $\pi_1(S)\rightarrow\mathrm{SL}_2$ of the fundamental group of~$S$ into $\mathrm{SL}_2$. The set $\Hom(\pi_1(S),\mathrm{SL}_2)$ of such representations has the structure of an affine variety, and the $\mathrm{SL}_2$-character variety of $S$ is the affine GIT~quotient 
\[
\mathcal{M}_{\text{flat}}(S,\mathrm{SL}_2)=\Hom(\pi_1(S),\mathrm{SL}_2)\sslash\mathrm{SL}_2
\]
of this variety by the conjugation action of~$\mathrm{SL}_2$. Note that any closed loop $\gamma\subset S$ defines a regular function $\Tr_\gamma$ on the character variety by the rule $\Tr_\gamma(\rho)=\Tr\rho(\gamma)$.

We will be interested in the subspace of $\mathcal{M}_{\text{flat}}(S,\mathrm{SL}_2)$ obtained by prescribing the conjugacy class of the monodromy of the flat connection around each puncture. More precisely, let $\gamma_1,\dots,\gamma_n\subset S$ be simple closed curves where $\gamma_i$ surrounds the point $p_i\in\bar{S}$, and let $\lambda=(\lambda_1,\dots,\lambda_n)\in(\mathbb{C}^*)^n$. Then the relative character variety $\mathcal{M}_{\text{flat}}^\lambda(S,\mathrm{SL}_2)\subset\mathcal{M}_{\text{flat}}(S,\mathrm{SL}_2)$ is the closed subscheme of the character variety cut out by the equations 
\[
\Tr_{\gamma_i}=\lambda_i+\lambda_i^{-1}
\]
for $i=1,\dots,n$.

We will consider the special case where $S=S_{1,1}$ is a once-punctured torus. Let $\alpha$,~$\beta$ be two loops in~$S_{1,1}$ generating the fundamental group $\pi_1(S_{1,1})\cong\mathbb{Z}*\mathbb{Z}$. A classical result going back to Fricke~\cite{F96} and Vogt~\cite{V89} (see also~\cite{G09}) says that, for $\lambda\in\mathbb{C}^*$, the relative $\mathrm{SL}_2$-character variety $\mathcal{M}_{\text{flat}}^\lambda(S_{1,1},\mathrm{SL}_2)$ is an affine cubic surface 
\[
\mathcal{M}_{\text{flat}}^\lambda(S_{1,1},\mathrm{SL}_2(\mathbb{C}))\cong\{(a,b,c)\in\mathbb{C}^3:a^2+b^2+c^2+abc=2+\lambda+\lambda^{-1}\}
\]
where the isomorphism is given by 
\[
\rho\mapsto(-\Tr\rho(\alpha),-\Tr\rho(\beta),-\Tr\rho(\alpha\beta)).
\]
In particular, this result implies that $\Tr_\alpha$, $\Tr_\beta$, and $\Tr_{\alpha\beta}$ generate the full algebra of regular functions on the relative character variety.

Associated to any surface $S=S_{g,n}$ is a noncommutative algebra $\Sk_A(S)$, called the Kauffman~bracket skein algebra, which is a quantization of the $\mathrm{SL}_2$-character variety of~$S$. As we will explain below, it is a $\mathbb{C}[A^{\pm1}]$-algebra generated by isotopy classes of framed links in the three-manifold $S\times[0,1]$, and its specialization $\Sk_A(S)\otimes_{\mathbb{C}[A^{\pm1}]}(\mathbb{C}[A^{\pm1}]/(A+1))$ at $A=-1$ is identified with the algebra of regular functions on~$\mathcal{M}_{\text{flat}}(S,\mathrm{SL}_2)$. Following~\cite{AS24}, we define a variant of this algebra, called the relative skein algebra and denoted $\Sk_{A,\lambda}(S)$. It is a noncommutative $\mathbb{C}[A^{\pm1},\lambda_1^{\pm1},\dots,\lambda_n^{\pm1}]$-algebra such that we recover the algebra of regular functions on~$\mathcal{M}_{\text{flat}}^\lambda(S,\mathrm{SL}_2)$ by specializing $A$ to~$-1$ and specializing the variables $\lambda_i$ to nonzero complex numbers.

When $S=S_{1,1}$ is a once-punctured torus, we can give an explicit description of the relative skein algebra $\Sk_{A,\lambda}(S_{1,1})$. Namely, it is the $\mathbb{C}[A^{\pm1},\lambda^{\pm1}]$-algebra generated by elements $\alpha$, $\beta$, and~$\gamma$, satisfying the relations 
\[
[\alpha,\beta]_A=(A^{-2}-A^2)\gamma, \quad [\beta,\gamma]_A=(A^{-2}-A^2)\alpha, \quad [\gamma,\alpha]_A=(A^{-2}-A^2)\beta,
\]
where $[x,y]_A\coloneqq A^{-1}xy-Ayx$, and the relation 
\[
A^{-2}\alpha^2+A^2\beta^2+A^{-2}\gamma^2-A^{-1}\alpha\beta\gamma=A^2+A^{-2}+\lambda+\lambda^{-1}.
\]
As we review below, this algebra is closely related to the spherical double affine Hecke algebra of type~$A_1$~\cite{C05}. Note that when we specialize to $A=-1$, the first three relations express the commutativity of the elements $\alpha$, $\beta$, and~$\gamma$, while the last relation becomes the defining equation of the cubic surface $\mathcal{M}_{\text{flat}}^\lambda(S_{1,1},\mathrm{SL}_2)$.

\subsection{Quantized $K$-theoretic Coulomb branches}

To define the Coulomb branch mathematically following the approach of Braverman, Finkelberg, and Nakajima~\cite{BFN18}, we start with a group~$G$ called the gauge group and a representation $N$ of~$G$. We assume the action of~$G$ on~$N$ extends to an action of $\widetilde{G}=G\times F$ for some group $F$ called a flavor symmetry group. For the theories studied in this paper, we take $N=\Hom_{\mathbb{C}}(\mathbb{C}^2,\mathbb{C}^2)$ to be the space of complex $2\times2$~matrices, we let $G=\mathrm{SL}_2$ or~$\mathrm{PGL}_2$ act on~$N$ by conjugation, and we let $F=\mathbb{C}^*$ act on~$N$ by scalar multiplication.

For general $G$ and~$N$, Braverman, Finkelberg, and Nakajima define a remarkable geometric object $\mathcal{R}_{G,N}$ called the variety of triples. It is an ind-scheme generalizing the affine Grassmannian of~$G$. The variety of triples is equipped with a natural action of $\widetilde{G}_{\mathcal{O}}$, the group of $\mathcal{O}$-valued points of~$\widetilde{G}$ where $\mathcal{O}=\mathbb{C}\llbracket z\rrbracket$. There is a further action of~$\mathbb{C}^*$ on~$\mathcal{R}_{G,N}$ by loop rotation. Using standard methods of geometric representation theory~\cite{BFN18,CG97,VV10}, one can define the $\widetilde{G}_{\mathcal{O}}$-equivariant $K$-theory $K^{\widetilde{G}_{\mathcal{O}}}(\mathcal{R}_{G,N})$ and the $\widetilde{G}_{\mathcal{O}}\rtimes\mathbb{C}^*$-equivariant $K$-theory $K^{\widetilde{G}_{\mathcal{O}}\rtimes\mathbb{C}^*}(\mathcal{R}_{G,N})$ of the variety of triples.

Braverman, Finkelberg, and Nakajima showed in~\cite{BFN18} that $K^{\widetilde{G}_{\mathcal{O}}}(\mathcal{R}_{G,N})$ is equipped with a natural convolution product, making it into a commutative $\mathbb{C}$-algebra, and they defined the $K$-theoretic Coulomb branch to be the spectrum of this algebra. Braverman, Finkelberg, and Nakajima also showed that $K^{\widetilde{G}_{\mathcal{O}}\rtimes\mathbb{C}^*}(\mathcal{R}_{G,N})$ has a convolution product, making it into a noncommutative algebra over $K^{\mathbb{C}^*}(\mathrm{pt})\cong\mathbb{C}[q^{\pm1}]$. The variable~$q$ can be seen as a quantization parameter, and one recovers the commutative algebra $K^{\widetilde{G}_{\mathcal{O}}}(\mathcal{R}_{G,N})$ by specializing to $q=1$. In the following, we will replace the group $\mathbb{C}^*$ by its double cover and thereby extend scalars to $\mathbb{C}[q^{\pm\frac{1}{2}}]$.

\subsection{Statement of the main result}
\label{sec:StatementOfTheMainResult}

To state the main result of this paper, we consider three involutions $\xi_1$, $\xi_2$, $\xi_3$ of the relative skein algebra $\Sk_{A,\lambda}(S_{1,1})$. In terms of the presentation given above, we have 
\begin{alignat*}{4}
\xi_1&: \quad &&\alpha\mapsto-\alpha, \quad &&\beta\mapsto\beta, \quad &&\gamma\mapsto-\gamma, \\
\xi_2&: \quad &&\alpha\mapsto\alpha, \quad &&\beta\mapsto-\beta, \quad &&\gamma\mapsto-\gamma, \\
\xi_3&: \quad &&\alpha\mapsto-\alpha, \quad &&\beta\mapsto-\beta, \quad &&\gamma\mapsto\gamma.
\end{alignat*}
The involution $\xi_2$ appeared in our previous paper~\cite{AS24}. There we showed that the $\xi_2$-invariant subalgebra of $\Sk_{A,\lambda}(S_{1,1})$ is isomorphic to a quantized $K$-theoretic Coulomb branch associated to the group $\mathrm{SL}_2$ and representation $N=\Hom_{\mathbb{C}}(\mathbb{C}^2,\mathbb{C}^2)$.

\begin{theorem}[\cite{AS24}, Theorem~1.3]
There is a $\mathbb{C}$-algebra isomorphism 
\[
\Sk_{A,\lambda}(S_{1,1})^{\xi_2}\cong K^{(\mathrm{SL}_2\times\mathbb{C}^*)_{\mathcal{O}}\rtimes\mathbb{C}^*}(\mathcal{R}_{\mathrm{SL}_2,N}).
\]
\end{theorem}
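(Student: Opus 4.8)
The plan is to give an explicit presentation of the quantized $K$-theoretic Coulomb branch $K^{(\mathrm{SL}_2\times\mathbb{C}^*)_{\mathcal{O}}\rtimes\mathbb{C}^*}(\mathcal{R}_{\mathrm{SL}_2,N})$ by generators and relations, and then exhibit a $\mathbb{C}$-algebra homomorphism from the $\xi_2$-invariant subalgebra $\Sk_{A,\lambda}(S_{1,1})^{\xi_2}$ onto it, checking that it is bijective. First I would identify $\Sk_{A,\lambda}(S_{1,1})^{\xi_2}$ concretely: since $\xi_2$ sends $\alpha\mapsto\alpha$, $\beta\mapsto-\beta$, $\gamma\mapsto-\gamma$, the invariant subalgebra is generated over $\mathbb{C}[A^{\pm1},\lambda^{\pm1}]$ by the even-degree elements $\alpha$, $\beta^2$, $\gamma^2$, $\beta\gamma$, and possibly $\alpha$ together with these quadratic monomials; one uses the cubic relation and the $A$-commutation relations to reduce to a minimal generating set and to compute all relations among these invariants. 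This produces a presentation of the left-hand side.

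Next I would compute the right-hand side. Using the localization theorem in equivariant $K$-theory, $K^{(\mathrm{SL}_2\times\mathbb{C}^*)_{\mathcal{O}}\rtimes\mathbb{C}^*}(\mathcal{R}_{\mathrm{SL}_2,N})$ embeds into a localized algebra of difference operators on the maximal torus $T\subset\mathrm{SL}_2$, following the general description of $K$-theoretic Coulomb branches for $G$ of rank one in terms of the "abelianized" algebra and explicit generating functions $E_\pm$ built from the monopole formula with the matter contribution coming from $N=\Hom_\mathbb{C}(\mathbb{C}^2,\mathbb{C}^2)$ with its conjugation action. Concretely, with $t$ the coordinate on $T$, the flavor parameter $\lambda$ and the loop-rotation parameter $q$, the Coulomb branch is generated by the Weyl-invariant combination of line bundles together with the monopole operators of minimal coweight, subject to the relation packaged by the product $E_+E_-$. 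I would extract from this a finite presentation by generators and relations in variables matching $\alpha$, $\beta^2$, $\gamma^2$, $\beta\gamma$, $\lambda$, and $q$, the correspondence $q\leftrightarrow A$ being fixed as in~\cite{AS24}.

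With presentations of both sides in hand, I would define the homomorphism on generators by matching the skein generators of the $\xi_2$-invariant subalgebra with the $K$-theory generators (the Weyl-invariant $K$-class corresponding to the Cartan part, and the two monopole operators corresponding to the $\beta$- and $\gamma$-directions), verify that all defining relations of $\Sk_{A,\lambda}(S_{1,1})^{\xi_2}$ are satisfied in the Coulomb branch algebra, so that the map is well-defined, and then verify surjectivity (the generators of the Coulomb branch are all hit) and injectivity (by comparing the two presentations directly, or by a PBW-type filtration/associated-graded argument showing the map is an isomorphism at the classical level $A=q=-1$ and that both sides are free over $\mathbb{C}[A^{\pm1}]$, hence flat, so bijectivity at $A=-1$ lifts).

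The main obstacle will be the faithful bookkeeping of the $K$-theoretic Coulomb branch side: getting the monopole formula right for $N=\Hom_\mathbb{C}(\mathbb{C}^2,\mathbb{C}^2)$ under conjugation (which includes an adjoint-type summand and a trivial summand, affecting the precise rational prefactors in $E_\pm$), handling the passage from $\mathrm{GL}_2$- or $\mathrm{PGL}_2$-style computations to $\mathrm{SL}_2$, and matching the normalizations of $q$, $\lambda$, and the half-integer shift so that the cubic relation of the skein algebra emerges exactly from the operator identity $E_+E_- = (\text{explicit Laurent polynomial in } t,\lambda,q)$. Once the dictionary is pinned down, checking the relations is a finite, if intricate, computation; establishing injectivity via flatness over $\mathbb{C}[A^{\pm1}]$ and the known classical isomorphism is then routine.
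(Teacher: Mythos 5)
Your overall strategy---presentation of both sides by generators and relations, then matching---is not the route taken in \cite{AS24} (nor in the present paper for the $\xi_1,\xi_3$ analogues), and it contains two genuine gaps. First, the step ``extract from this a finite presentation by generators and relations'' of the quantized Coulomb branch is precisely the hard part, and you give no mechanism for proving that your candidate list of relations is \emph{complete}. The actual argument sidesteps this entirely: one embeds the skein algebra into spherical DAHA and then, via the faithful polynomial representation, into $\End\mathbb{C}_{q,t}[X^{\pm1}]^{\mathbb{Z}_2}$ (Propositions~\ref{prop:skeinDAHA} and~\ref{prop:generatingoperators}); one separately embeds the Coulomb branch into the localized difference-operator algebra $\mathcal{D}_{q,z}$ via $\mathbf{z}^*(\iota_*)^{-1}$ and identifies its image with the algebra generated by dressed minuscule monopole operators (Proposition~\ref{prop:monopolegenerate}); and then one shows the two \emph{images} inside the common operator algebra coincide by explicit computation of the operators attached to the curves $\gamma_k$ and to $E_n[f]$, $F_n[f]$ (compare Lemmas~\ref{lem:curveoperators} and~\ref{lem:Coulombrep}). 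Injectivity on both sides is then automatic from faithfulness of the representations, and no presentation of either algebra as an abstract quotient is ever needed. You do invoke the localization embedding, but you then abandon it in favor of a presentation, which reintroduces the completeness problem.

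Second, your injectivity argument is incorrect as stated: a map of free $\mathbb{C}[A^{\pm1}]$-modules that is bijective at the single specialization $A=-1$ need not be bijective (multiplication by $A-1$ on $\mathbb{C}[A^{\pm1}]$ is already a counterexample). To make a specialization argument work you would need bijectivity at a Zariski-dense set of parameters together with finite-rank graded or filtered pieces on both sides, none of which you set up. The remaining issues you flag (the matter content of $N=\Hom_{\mathbb{C}}(\mathbb{C}^2,\mathbb{C}^2)$, the $\mathrm{GL}_2$-to-$\mathrm{SL}_2$ passage, the normalization of $q$, $\lambda$, and the half-integer shifts) are real but are bookkeeping; the two points above are the substantive gaps.
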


In the present paper, we consider the invariant subalgebras corresponding to the involutions~$\xi_1$ and~$\xi_3$ and show that these are naturally isomorphic to quantized Coulomb branches for the Langlands dual gauge group~$\mathrm{PGL}_2$.

\begin{theorem}
\label{thm:intromain}
There are $\mathbb{C}$-algebra isomorphisms 
\[
\Sk_{A,\lambda}(S_{1,1})^{\xi_1}\cong K^{(\mathrm{PGL}_2\times\mathbb{C}^*)_{\mathcal{O}}\rtimes\mathbb{C}^*}(\mathcal{R}_{\mathrm{PGL}_2,N})\cong\Sk_{A,\lambda}(S_{1,1})^{\xi_3}.
\]
\end{theorem}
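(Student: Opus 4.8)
The plan is to leverage Theorem~1.3 of \cite{AS24} together with an understanding of how the Coulomb branch changes when the gauge group $\mathrm{SL}_2$ is replaced by its Langlands dual $\mathrm{PGL}_2$. On the skein side, the three involutions $\xi_1,\xi_2,\xi_3$ are conjugate under the $\mathrm{SL}_2(\mathbb{Z})$ mapping class group action: the standard generators $S$ and $T$ of $\mathrm{SL}_2(\mathbb{Z})$ act on the presentation by permuting $\alpha,\beta,\gamma$ (up to signs), and one checks directly that they conjugate $\xi_2$ to $\xi_1$ and to $\xi_3$. This immediately yields a $\mathbb{C}[A^{\pm 1},\lambda^{\pm 1}]$-algebra isomorphism $\Sk_{A,\lambda}(S_{1,1})^{\xi_1}\cong\Sk_{A,\lambda}(S_{1,1})^{\xi_2}\cong\Sk_{A,\lambda}(S_{1,1})^{\xi_3}$, so by Theorem~1.3 all three are isomorphic to $K^{(\mathrm{SL}_2\times\mathbb{C}^*)_{\mathcal{O}}\rtimes\mathbb{C}^*}(\mathcal{R}_{\mathrm{SL}_2,N})$. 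The remaining task is therefore purely on the Coulomb-branch side: to prove
\[
K^{(\mathrm{SL}_2\times\mathbb{C}^*)_{\mathcal{O}}\rtimes\mathbb{C}^*}(\mathcal{R}_{\mathrm{SL}_2,N})\cong K^{(\mathrm{PGL}_2\times\mathbb{C}^*)_{\mathcal{O}}\rtimes\mathbb{C}^*}(\mathcal{R}_{\mathrm{PGL}_2,N}).
\]

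To establish this isomorphism I would use the abelianized/fixed-point description of the quantized $K$-theoretic Coulomb branch from \cite{BFN18}. After localization, $K^{(\widetilde G)_{\mathcal{O}}\rtimes\mathbb{C}^*}(\mathcal{R}_{G,N})$ embeds into a ring of difference operators on the maximal torus $T$ of $G$ twisted by the flavor and loop-rotation parameters: concretely, it is generated by the representation ring $\mathbb{C}[q^{\pm\frac12}]\otimes R(T)^{W}$ of functions together with shift operators $\mathsf{u}_\lambda$ indexed by the coweight lattice of $G$, with explicit rational coefficients built from the $\widetilde G$-weights of $N$ and their images under the loop coordinate. For $G=\mathrm{SL}_2$ the coweight lattice is the root lattice $\mathbb{Z}\alpha^\vee$, while for $G=\mathrm{PGL}_2$ it is the full coweight lattice $\tfrac12\mathbb{Z}\alpha^\vee$; correspondingly the character lattice $R(T)$ switches from $\mathbb{Z}\varpi$ (fundamental weight) to $\mathbb{Z}\alpha$ (root lattice). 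One then writes down generators and relations for each side explicitly — this is feasible because $N=\Hom_{\mathbb{C}}(\mathbb{C}^2,\mathbb{C}^2)$ has a completely transparent weight decomposition under a maximal torus of $\mathrm{SL}_2$, namely a two-dimensional zero-weight space and one copy each of the weights $\pm\alpha$, all twisted by the flavor $\mathbb{C}^*$ — and matches the presentations, identifying the generators of $\Sk_{A,\lambda}(S_{1,1})^{\xi_1}$ (and $^{\xi_3}$) with the degree-$\pm 1$ monopole operators for $\mathrm{PGL}_2$ while the $\xi_2$-invariants correspond to the degree-$\pm 1$ monopole operators for $\mathrm{SL}_2$, which are the ``squares'' (minimal-length in the $\mathrm{SL}_2$ coweight lattice) of the $\mathrm{PGL}_2$ ones. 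The upshot is that both Coulomb-branch algebras coincide with the same spherical DAHA of type $A_1$ up to a relabeling of parameters, consistent with the $S$-duality $\mathrm{SL}_2\leftrightarrow\mathrm{PGL}_2$ of the 4d $\mathcal{N}=2^*$ theory.

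Concretely, the sequence of steps is: (i) recall from \cite{C05} and \cite{AS24} the presentation of the spherical DAHA $S\ddot H_{q,t}$ of type $A_1$ and the identification of $\Sk_{A,\lambda}(S_{1,1})$ with a form of it, together with the description of each $\xi_i$-invariant subalgebra in DAHA terms; (ii) check that $S,T\in\mathrm{SL}_2(\mathbb{Z})$ act on $\Sk_{A,\lambda}(S_{1,1})$ permuting $\{\xi_1,\xi_2,\xi_3\}$, giving the outer isomorphisms between the three invariant subalgebras; (iii) compute $K^{(\mathrm{PGL}_2\times\mathbb{C}^*)_{\mathcal{O}}\rtimes\mathbb{C}^*}(\mathcal{R}_{\mathrm{PGL}_2,N})$ via its localized/abelianized presentation, producing explicit generators (the classes $[\mathcal{O}(\pm\varpi^\vee_{\mathrm{PGL}_2})]$ of minuscule monopole operators and the Cartan generators) and relations; (iv) exhibit the isomorphism with $\Sk_{A,\lambda}(S_{1,1})^{\xi_1}$ by matching generators and relations, being careful to track how the flavor parameter and the half-integer powers of $q$ enter; (v) invoke step (ii) to get the isomorphism with $\Sk_{A,\lambda}(S_{1,1})^{\xi_3}$ for free. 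The main obstacle I expect is step (iii)–(iv): controlling the $\mathrm{PGL}_2$ affine Grassmannian, which has $\pi_0=\mathbb{Z}/2$ and hence an extra connected component compared to $\mathrm{SL}_2$, and verifying that the convolution relations among the monopole operators in this disconnected setting reproduce exactly the skein relations $[\alpha,\beta]_A=(A^{-2}-A^2)\gamma$ etc.\ and the cubic relation restricted to the $\xi_1$-invariants — in particular getting the precise coefficients and the correct dependence on $\lambda$ and $q^{\pm1/2}$ right, rather than merely an abstract isomorphism. This is where the computation in \cite{BFN18} of the quantized Coulomb branch for $\mathrm{PGL}_2$ with the adjoint-plus-trivial representation, and its comparison with the DAHA, must be carried out or cited with care.
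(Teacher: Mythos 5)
Your step (ii) is correct and is exactly Proposition~\ref{prop:skeindualities} of the paper: one checks on generators that $\tau_+\circ\xi_1=\xi_3\circ\tau_+$ and $\sigma\circ\xi_1=\xi_2\circ\sigma$ (using $\gamma_{-1}=\frac{\alpha\beta+\beta\alpha}{A+A^{-1}}-\gamma$), so the three invariant subalgebras are permuted by the mapping class group. Your concrete plan (iii)--(v) is also essentially the paper's route: compute $K^{(\mathrm{PGL}_2\times\mathbb{C}^*)_{\mathcal{O}}\rtimes\mathbb{C}^*}(\mathcal{R}_{\mathrm{PGL}_2,N})$ through an abelianized presentation, match it with $\Sk_{A,\lambda}(S_{1,1})^{\xi_1}$ inside the DAHA polynomial representation, and obtain the $\xi_3$ case by twisting with $\tau_+$. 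The paper executes (iii) by passing through $\mathrm{GL}_2$ rather than working with the $\mathrm{PGL}_2$ torus directly: $\mathcal{R}_{\mathrm{PGL}_2,N}\cong\mathcal{R}_{\mathrm{GL}_2,N}/\mathbb{Z}$ (Lemma~\ref{lem:Zaction}), which yields $K^{(\mathrm{PGL}_2\times\mathbb{C}^*)_{\mathcal{O}}\rtimes\mathbb{C}^*}(\mathcal{R}_{\mathrm{PGL}_2,N})\cong\mathcal{A}_{q,z}^{\mathbb{C}^*}/(D_1D_2-1)$ and lets one reuse the $\mathrm{GL}_2$ monopole-operator formulas of~\cite{AS24}; step (iv) is then done by enlarging to an algebra $\widetilde{\mathcal{A}}_{q,z}$ containing half-integer powers of the $w_i$, showing $\Sk_{A,\lambda}(S_{1,1})\cong\widetilde{\mathcal{A}}_{q,z}^{\mathbb{C}^*}/(D_1D_2-1)$ in two ways, and identifying $\Sk^{\xi_1}$ and $\Sk^{\xi_3}$ with the fixed points of the sign involution $w_1^{1/2}w_2^{-1/2}\mapsto-w_1^{1/2}w_2^{-1/2}$, whose invariants are exactly $\mathcal{A}_{q,z}^{\mathbb{C}^*}/(D_1D_2-1)$.

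The genuine gap is in your first paragraph. Reducing the theorem to the single statement $K^{(\mathrm{SL}_2\times\mathbb{C}^*)_{\mathcal{O}}\rtimes\mathbb{C}^*}(\mathcal{R}_{\mathrm{SL}_2,N})\cong K^{(\mathrm{PGL}_2\times\mathbb{C}^*)_{\mathcal{O}}\rtimes\mathbb{C}^*}(\mathcal{R}_{\mathrm{PGL}_2,N})$ is not a simplification: that isomorphism \emph{is} the S-duality statement the theorem encodes, it is a consequence of the theorem rather than an input to it, and the justification you offer --- that both algebras ``coincide with the same spherical DAHA of type $A_1$ up to a relabeling of parameters'' --- is not correct as stated. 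Neither Coulomb branch is the full spherical DAHA; each is a proper $\mathbb{Z}_2$-type subalgebra of (the skein-algebra form of) $\mathcal{SH}_{q,t}(A_1)$, and the two abelianized presentations are genuinely different: for $\mathrm{SL}_2$ the shift operators are indexed by the coroot lattice and the multiplication operators by the weight lattice, while for $\mathrm{PGL}_2$ these roles are interchanged. The isomorphism between them is induced by the Fourier-transform element $\sigma$ of the $\mathrm{SL}_2(\mathbb{Z})$-action on DAHA, which exchanges multiplication and shift operators --- a nontrivial automorphism, not a relabeling of $q$, $t$, $z$. Since you defer exactly this identification (your steps (iii)--(iv)) to ``the main obstacle,'' the proposal as written asserts rather than proves the heart of the theorem; what is needed is the explicit computation of the $\mathrm{PGL}_2$ monopole algebra and its matching with the $\xi_1$-invariants, which is what Lemmas~\ref{lem:PGL2spaceoftriples}--\ref{lem:Coulombrep} and Propositions~\ref{prop:skeinmonopole1}--\ref{prop:skeinmonopole2} carry out.
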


In the above theorems, the isomorphisms map the quantization parameter $A$ in the relative skein algebra to the element $q^{-\frac{1}{2}}$ in the quantized Coulomb branches. We can therefore take the classical limit and relate each of the $\mathbb{Z}_2$-quotients $\mathcal{M}_{\text{flat}}^\lambda(S_{1,1},\mathrm{SL}_2)\sslash\langle\xi_i\rangle$ to a $K$-theoretic Coulomb branch. This confirms the predictions of Section~8.4 of~\cite{GMN13b} and Section~4.2 of~\cite{GKNPS23}. Note that our gauge groups $\mathrm{SL}_2$ and~$\mathrm{PGL}_2$ are the complexifications of the groups $\mathrm{SU}(2)$ and $\mathrm{SO}(3)$ appearing in these physics references, consistent with the requirements of~\cite{BFN18}.

Finally, let us consider the mapping class group $\mathrm{SL}_2(\mathbb{Z})\cong\langle\sigma,\tau_+|\sigma^4=1,(\sigma\tau_+)^3=\sigma^2\rangle$ of~$S_{1,1}$. This group acts by automorphisms on the relative skein algebra $\Sk_{A,\lambda}(S_{1,1})$, and the action is compatible with the action on spherical DAHA (see Proposition~\ref{prop:SL2Zequivariance}). If we set $\tau_-=\tau_+\sigma^{-1}\tau_+$, then the elements $\tau_+$,~$\tau_-$, and~$\sigma$ permute the $\xi_i$-invariant subalgebras of $\Sk_{A,\lambda}(S_{1,1})$ in the manner illustrated in Figure~\ref{fig:skeindualities}. Physically, $\mathrm{SL}_2(\mathbb{Z})$ is the S-duality group of the $\mathcal{N}=2^*$ theories, and the symmetries in Figure~\ref{fig:skeindualities} follow from the dualities depicted in the diagram~(4.2) of~\cite{GKNPS23}.

\begin{figure}[ht]
\begin{tikzpicture}[scale=1.25]
\node(N) at (0,1.732) {$\Sk_{A,\lambda}(S_{1,1})^{\xi_2}$};
\node(N1) at (N.north) {};
\node(N2) at (N.south) {};
\node(E) at (2,0) {$\Sk_{A,\lambda}(S_{1,1})^{\xi_3}$};
\node(E1) at (E.east) {};
\node(W) at (-2,0) {$\Sk_{A,\lambda}(S_{1,1})^{\xi_1}$};
\node(W1) at (W.west) {};
\path[->]  (E1.north) edge [loop right,out=30,in=-30,distance=2.5em] node [right] {\tiny $\sigma$} (E1.south);
\path[->]  (W1.south) edge [loop right,out=210,in=150,distance=2.5em] node [left] {\tiny $\tau_-$} (W1.north);
\path[->]  (N1.west) edge [loop right,out=120,in=60,distance=2.5em] node [above] {\tiny $\tau_+$} (N1.east);
\draw[<->]  (W.east) -- node [midway,below] {\tiny $\tau_+$} (E.west);
\draw[<->]  (W.north) -- node [midway,above] {\tiny $\sigma$} (N2.west);
\draw[<->]  (E.north) -- node [midway,above] {\tiny $\tau_-$} (N2.east);
\end{tikzpicture}
\caption{Invariant subalgebras of $\Sk_{A,\lambda}(S_{1,1})$.\label{fig:skeindualities}}
\end{figure}
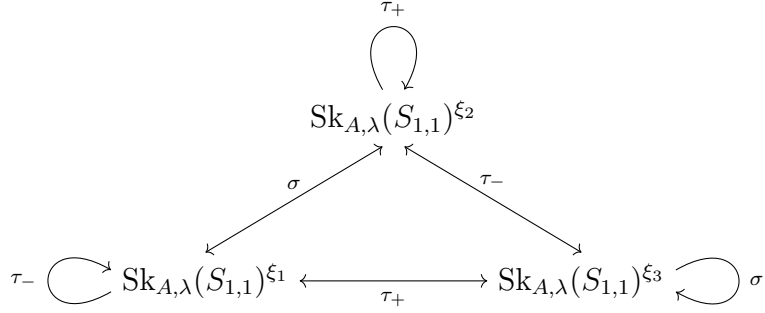

\subsection*{Acknowledgements.}
The authors thank Du~Pei for helpful discussions related to this work. PS is supported by NSFC Grant No.~12225108 and by the New Cornerstone Science Foundation through the Xplorer Prize.

\section{The quantized $\mathrm{SL}_2$-character variety of $S_{1,1}$}

In this section, we give a presentation of the relative skein algebra of a once-punctured torus, construct a faithful representation of this algebra, and describe its $\mathrm{SL}_2(\mathbb{Z})$ symmetries.

\subsection{A presentation of the skein algebra}

Let $S=S_{g,n}$ be the surface obtained from a closed surface~$\bar{S}$ of genus~$g$ by removing $n$ distinct points $\{p_1,\dots,p_n\}\subset\bar{S}$. Following Section~2 of~\cite{AS24}, we write $\mathcal{L}_A(S)$ for the $\mathbb{C}[A^{\pm1}]$-module generated by isotopy classes of framed links in~$S\times[0,1]$. If $[L_1]$,~$[L_2]\in\mathcal{L}_A(S)$ are the isotopy classes of framed links $L_1$,~$L_2\subset S\times[0,1]$, then we can find isotopic links $L_1'\simeq L_1$ and $L_2'\simeq L_2$ that lie in $S\times[0,\frac{1}{2})$ and $S\times(\frac{1}{2},1]$, respectively. We define the product $[L_1][L_2]\in\mathcal{L}_A(S)$ as the isotopy class of the union $L_1'\cup L_2'\subset S\times[0,1]$. Extending $\mathbb{C}[A^{\pm1}]$-bilinearly, we see that $\mathcal{L}_A(S)$ has the natural structure of a $\mathbb{C}[A^{\pm1}]$-algebra.

We say that a framed link $L\subset S\times[0,1]$ has the \emph{blackboard framing} if the framing vector at any point of~$L$ is tangent to the $[0,1]$-factor and points towards~1. Given any framed link $L\subset S\times[0,1]$, we can always find an isotopic framed link $L'\simeq L$ that has the blackboard framing. In the following, we represent the isotopy class of~$L$ by drawing the projection of~$L'$ to~$S$ and, if two points of~$L'$ project to the same point of~$S$, indicating the order of their $[0,1]$-coordinates. We can then define the \emph{(Kauffman bracket) skein algebra}~$\Sk_A(S)$ to be the quotient of~$\mathcal{L}_A(S)$ by the relations illustrated in Figure~\ref{fig:skein}. In each of these relations, the diagrams represent framed links in~$S\times[0,1]$. We show only the part of the link that projects to a small neighborhood in~$S$, shaded in gray, and assume that the links in a given relation are identical outside this neighborhood.

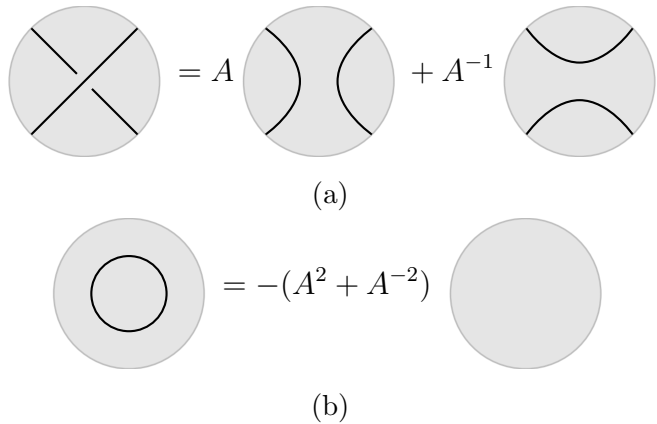
\begin{figure}[ht]
\begin{subfigure}{\textwidth}
\[
\begin{tikzpicture}[baseline=(current bounding box.center)]
\clip(-1.125,-1) rectangle (1.125,1);
\draw[fill=gray,opacity=0.2] (0,0) circle (1);
\draw[black, thick] (-0.707,-0.707) -- (0.707,0.707);
\draw[black, thick] (-0.707,0.707) -- (-0.1,0.1);
\draw[black, thick] (0.1,-0.1) -- (0.707,-0.707);
\draw[gray, opacity=0.2, thick] (0,0) circle (1);
\end{tikzpicture}
= A
\begin{tikzpicture}[baseline=(current bounding box.center)]
\clip(-1.125,-1) rectangle (1.125,1);
\draw[fill=gray,opacity=0.2] (0,0) circle (1);
\draw[black, thick] plot [smooth, tension=1] coordinates { (0.707,0.707) (0.25,0) (0.707,-0.707)};
\draw[black, thick] plot [smooth, tension=1] coordinates { (-0.707,0.707) (-0.25,0) (-0.707,-0.707)};
\draw[gray, opacity=0.2, thick] (0,0) circle (1);
\end{tikzpicture}
+ A^{-1}
\begin{tikzpicture}[baseline=(current bounding box.center)]
\clip(-1.125,-1) rectangle (1.125,1);
\draw[fill=gray,opacity=0.2] (0,0) circle (1);
\draw[black, thick] plot [smooth, tension=1] coordinates { (-0.707,0.707) (0,0.25) (0.707,0.707)};
\draw[black, thick] plot [smooth, tension=1] coordinates { (-0.707,-0.707) (0,-0.25) (0.707,-0.707)};
\draw[gray, opacity=0.2, thick] (0,0) circle (1);
\end{tikzpicture}
\]
\caption{\label{subfig:resolution}}
\end{subfigure}
\begin{subfigure}{\textwidth}
\[
\begin{tikzpicture}[baseline=(current bounding box.center)]
\clip(-1.125,-1) rectangle (1.125,1);
\draw[fill=gray,opacity=0.2] (0,0) circle (1);
\draw[black, thick] (0,0) circle (0.5);
\draw[gray, opacity=0.2, thick] (0,0) circle (1);
\end{tikzpicture}
= -(A^2+A^{-2})
\begin{tikzpicture}[baseline=(current bounding box.center)]
\clip(-1.25,-1) rectangle (1.25,1);
\draw[fill=gray,opacity=0.2] (0,0) circle (1);
\draw[gray, opacity=0.2, thick] (0,0) circle (1);
\end{tikzpicture}
\]
\caption{\label{subfig:unknot}}
\end{subfigure}
\caption{The Kauffman bracket skein relations.\label{fig:skein}}
\end{figure}

In the following, we will be interested in a variant of the skein algebra~$\Sk_A(S)$, defined in~\cite{AS24}. Namely, we define the \emph{relative skein algebra} to be the quotient of $\Sk_A(S)\otimes_{\mathbb{C}[A^{\pm1}]}\mathbb{C}[A^{\pm1},\lambda_1^{\pm1},\dots,\lambda_n^{\pm1}]$ by the additional relation illustrated in Figure~\ref{fig:puncturerelation}. The diagrams in this relation represent framed links in~$S\times[0,1]$. We show only the part of the link that projects to a small neighborhood of the puncture $p_i\in\bar{S}$, shaded in gray, and we assume that the links in this relation are identical outside this neighborhood. As explained in~\cite{AS24}, the algebra defined in this way provides a quantization of the relative $\mathrm{SL}_2$-character variety of~$S$.

\begin{figure}[ht]
\begin{tikzpicture}
\draw[fill=gray,opacity=0.2] (0,0) circle (1);
\draw[black, thick] (0,0) circle (0.5);
\draw[gray, opacity=0.2, thick] (0,0) circle (1);
\node at (0,0) {\tiny $\bullet$};
\end{tikzpicture}
\raisebox{1cm}{$= -(\lambda_i^2+\lambda_i^{-2})$}
\begin{tikzpicture}
\draw[fill=gray,opacity=0.2] (0,0) circle (1);
\draw[gray, opacity=0.2, thick] (0,0) circle (1);
\node at (0,0) {\tiny $\bullet$};
\end{tikzpicture}
\caption{Additional relation associated to a puncture.\label{fig:puncturerelation}}
\end{figure}
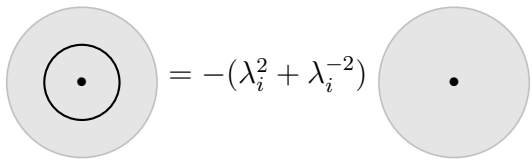

In this paper, we are interested in the relative skein algebra $\Sk_{A,\lambda}(S)$ of a once-punctured torus $S=S_{1,1}$. We will view~$S$ as the surface obtained by identifying opposite sides of a quadrilateral and deleting the image of a vertex of the quadrilateral in the resulting quotient space. Figure~\ref{fig:skeingenerators} shows three simple closed curves on this surface, denoted $\alpha$, $\beta$, and $\gamma$, which will play an important role in our considerations.

\begin{figure}[ht]
\begin{tikzpicture}
\clip(-1.3,-1) rectangle (1.3,1);
\coordinate (a) at (-0.75,-0.75);
\coordinate (b) at (-0.75,0.75);
\coordinate (c) at (0.75,0.75);
\coordinate (d) at (0.75,-0.75);
\draw[gray, thin] (a) -- (b);
\draw[gray, thin] (b) -- (c);
\draw[gray, thin] (c) -- (d);
\draw[gray, thin] (d) -- (a);
\draw[black, thick] (-0.75,0) -- (0.75,0);
\node at (0,0.2) {$\alpha$};
\end{tikzpicture}
\begin{tikzpicture}
\clip(-1.3,-1) rectangle (1.3,1);
\coordinate (a) at (-0.75,-0.75);
\coordinate (b) at (-0.75,0.75);
\coordinate (c) at (0.75,0.75);
\coordinate (d) at (0.75,-0.75);
\draw[gray, thin] (a) -- (b);
\draw[gray, thin] (b) -- (c);
\draw[gray, thin] (c) -- (d);
\draw[gray, thin] (d) -- (a);
\draw[black, thick] (0,-0.75) -- (0,0.75);
\node at (0.2,0) {$\beta$};
\end{tikzpicture}
\begin{tikzpicture}
\clip(-1.3,-1) rectangle (1.3,1);
\coordinate (a) at (-0.75,-0.75);
\coordinate (b) at (-0.75,0.75);
\coordinate (c) at (0.75,0.75);
\coordinate (d) at (0.75,-0.75);
\draw[gray, thin] (a) -- (b);
\draw[gray, thin] (b) -- (c);
\draw[gray, thin] (c) -- (d);
\draw[gray, thin] (d) -- (a);
\draw[black, thick] plot [smooth, tension=1] coordinates { (0,0.75) (0.25,0.25) (0.75,0)};
\draw[black, thick] plot [smooth, tension=1] coordinates { (-0.75,0) (-0.25,-0.25) (0,-0.75)};
\node at (-0.1,-0.1) {$\gamma$};
\end{tikzpicture}
\caption{Skein algebra generators.\label{fig:skeingenerators}}
\end{figure}
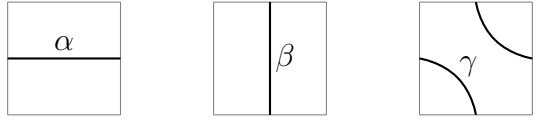

\begin{proposition}
\label{prop:presentation}
The relative skein algebra $\Sk_{A,\lambda}(S_{1,1})$ of the once-punctured torus is the $\mathbb{C}[A^{\pm1},\lambda^{\pm1}]$-algebra generated by $\alpha$, $\beta$, and $\gamma$, subject to the relations 
\begin{align*}
A^{-1}\alpha\beta-A\beta\alpha &= (A^{-2}-A^2)\gamma, \\
A^{-1}\beta\gamma-A\gamma\beta &= (A^{-2}-A^2)\alpha, \\
A^{-1}\gamma\alpha-A\alpha\gamma &= (A^{-2}-A^2)\beta,
\end{align*}
and the relation 
\[
A^{-2}\alpha^2+A^2\beta^2+A^{-2}\gamma^2-A^{-1}\alpha\beta\gamma=A^2+A^{-2}+\lambda+\lambda^{-1}.
\]
\end{proposition}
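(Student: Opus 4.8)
The plan is to introduce the abstract $\mathbb{C}[A^{\pm1},\lambda^{\pm1}]$-algebra $\mathcal{A}$ presented in the statement, produce a surjective homomorphism $\phi\colon\mathcal{A}\twoheadrightarrow\Sk_{A,\lambda}(S_{1,1})$ carrying the generators to the skein classes $\alpha,\beta,\gamma$ of Figure~\ref{fig:skeingenerators}, and then show $\phi$ is injective by a spanning-set count. For surjectivity I would invoke the standard basis theorem for Kauffman bracket skein algebras: $\Sk_A(S_{1,1})$ is free over $\mathbb{C}[A^{\pm1}]$ on the isotopy classes of multicurves, and on the once-punctured torus every essential multicurve is a union of parallel copies of a single essential simple closed curve together with parallel copies of the boundary loop $\delta$. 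Essential simple closed curves are classified by their slope in $\mathbb{Q}\cup\{\infty\}$, and $\alpha,\beta,\gamma$ realize the three vertices of a triangle of the Farey complex. Resolving a single crossing with the relation of Figure~\ref{subfig:resolution} shows that when two essential curves meet once their product is a $\mathbb{C}[A^{\pm1}]$-combination of the two curves adjacent to both of them in the Farey complex; an induction along the Farey complex (on the complexity of the slope) then expresses the curve of every slope, hence every multicurve, as a polynomial in $\alpha,\beta,\gamma,\delta$. Since the puncture relation of Figure~\ref{fig:puncturerelation} turns $\delta$ into a scalar, the classes $\alpha,\beta,\gamma$ already generate $\Sk_{A,\lambda}(S_{1,1})$.

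Next I would check that the four relations hold in $\Sk_{A,\lambda}(S_{1,1})$, so that $\phi$ is well defined. For the quadratic relations: $\alpha$ and $\beta$ meet once, so each of $\alpha\beta$ and $\beta\alpha$ is a $\mathbb{C}[A^{\pm1}]$-combination of the two essential curves adjacent to both, with the two smoothings interchanged since reversing the order of the strands reverses the crossing; hence $A^{-1}\alpha\beta-A\beta\alpha$ is $(A^{-2}-A^2)$ times one of these curves, which one identifies --- modulo the skein relations --- with $\gamma$, and the other two relations follow by the evident threefold symmetry. For the cubic relation one computes the skein class of the boundary loop $\delta$ in terms of $\alpha,\beta,\gamma$ by resolving the crossings in a regular neighbourhood of $\alpha\cup\beta$; this is the quantum counterpart of the classical trace identity $\Tr[X,Y]=(\Tr X)^2+(\Tr Y)^2+(\Tr XY)^2-\Tr X\,\Tr Y\,\Tr XY-2$, and it yields a cubic polynomial in $\alpha,\beta,\gamma$ over $\mathbb{C}[A^{\pm1}]$. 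Equating this polynomial with the scalar value of $\delta$ dictated by Figure~\ref{fig:puncturerelation} gives exactly the displayed cubic relation, after the harmless relabelling of the puncture parameter that replaces $\lambda_1^{2}+\lambda_1^{-2}$ by $\lambda+\lambda^{-1}$.

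Finally I would prove that $\phi$ is injective. Reading the three quadratic relations as straightening rules and the cubic relation --- which expresses $\gamma^2$ through $\alpha^2,\beta^2,\alpha\beta\gamma$ and constants, the coefficient of $\gamma^2$ being a unit since $A$ is invertible --- as a reduction rule for powers of $\gamma$, one reduces every element of $\mathcal{A}$ to a $\mathbb{C}[A^{\pm1},\lambda^{\pm1}]$-combination of the monomials $\alpha^i\beta^j\gamma^k$ with $i,j\ge0$ and $k\in\{0,1\}$. Their images under $\phi$ are the corresponding elements of $\Sk_{A,\lambda}(S_{1,1})$, so it is enough to show these are $\mathbb{C}[A^{\pm1},\lambda^{\pm1}]$-linearly independent; then $\phi$ carries a spanning set to a basis and is an isomorphism. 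This linear independence is the real content of the proof --- everything preceding it is routine diagrammatic bookkeeping --- and it amounts to a PBW theorem for $\mathcal{A}$. I expect it to be the main obstacle, and it is cleanest to obtain it not by chasing all the ambiguities of the diamond lemma by hand but by importing the PBW statement from the well-known relation between $\Sk_{A,\lambda}(S_{1,1})$ and the spherical subalgebra of the type-$A_1$ double affine Hecke algebra (equivalently, from the multicurve basis of the skein module of $S_{1,1}$); one can also argue by degeneration, noting that at $A=-1$ the algebra becomes the coordinate ring of the affine cubic surface, which is free of rank two over $\mathbb{C}[\alpha,\beta]$ on $\{1,\gamma\}$, and lifting this freeness to generic $A$.
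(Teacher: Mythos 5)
Your proposal is mathematically sound, but it takes a very different route from the paper simply because the paper offers no argument at all: the stated proof is a one-line citation of Proposition~2.3 of~\cite{AS24}, where the presentation of the (relative) skein algebra of $S_{1,1}$ is established. What you have written out is essentially the standard Bullock--Przytycki-style proof of that cited result: surjectivity from the multicurve basis together with the Farey-complex recursion, well-definedness by resolving crossings, and injectivity by reducing to the spanning set $\{\alpha^i\beta^j\gamma^k : k\le 1\}$ and matching it against a known basis. All three steps are the right ones, and you correctly identify the linear independence of the reduced monomials as the only non-routine point. The cleanest way to close it is the one you mention second: Przytycki's theorem that $\Sk_A(S_{1,1})$ is free on essential multicurves, combined with the observation that the puncture relation kills only the (central) boundary variable, so the powers of the slope curves remain a basis of the quotient and the monomials $\alpha^i\beta^j\gamma^k$ ($k\le1$) biject with them. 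Your first suggestion --- importing the PBW statement from the skein--DAHA embedding of Proposition~\ref{prop:skeinDAHA} --- risks circularity, since that embedding is itself typically constructed by checking the defining relations of this very presentation; the multicurve route is independent and should be the one you commit to. The degeneration-at-$A=-1$ alternative also needs an extra word about the absence of $(A+1)$-torsion before freeness lifts, so it is not quite a substitute on its own.

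One more point deserves more than the phrase ``harmless relabelling.'' With the puncture relation literally as drawn in Figure~\ref{fig:puncturerelation}, the boundary loop $\delta$ is set equal to $-(\lambda^2+\lambda^{-2})$, and the skein identity expressing $\delta$ through $\alpha,\beta,\gamma$ (obtained by resolving the two crossings of $\gamma_1\cup\gamma_{-1}$) has the form $\delta=A^2+A^{-2}-\bigl(A^{-2}\alpha^2+A^2\beta^2+A^{-2}\gamma^2-A^{-1}\alpha\beta\gamma\bigr)$. Substituting would produce $\lambda^2+\lambda^{-2}$, not $\lambda+\lambda^{-1}$, on the right of the cubic. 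A substitution $\lambda^2\mapsto\lambda$ is not an automorphism of the fixed ground ring $\mathbb{C}[A^{\pm1},\lambda^{\pm1}]$, so you cannot wave this away; you must either verify that the conventions of~\cite{AS24} normalize the puncture relation so that the constant is $\lambda+\lambda^{-1}$ (which is what the specialization to the character variety, $\Tr_{\gamma_i}=\lambda_i+\lambda_i^{-1}$ at $A=-1$, in fact forces), or carry the discrepancy explicitly. This is a normalization issue inherited from the source paper rather than an error in your strategy, but as written your proof would land on a cubic relation with the wrong constant term.
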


\begin{proof}
This is immediate from the definition of the relative skein algebra and Proposition~2.3 of~\cite{AS24}.
\end{proof}

\subsection{DAHA and its polynomial representation}

In the following, we will study the skein algebra of the once-punctured torus by relating it to the spherical DAHA of type~$A_1$. Let us set $\mathbb{C}_{q,t}\coloneqq\mathbb{C}(q^{\frac{1}{2}},t^{\frac{1}{2}})$. Then the \emph{double affine Hecke algebra (DAHA)} of type~$A_1$ is the $\mathbb{C}_{q,t}$-algebra $\mathcal{H}_{q,t}(A_1)$ generated by variables $T$, $X^{\pm1}$, and $Y^{\pm1}$, subject to the relations 
\[
TXT=X^{-1}, \quad TY^{-1}T=Y, \quad Y^{-1}X^{-1}YXT^2=q^{-1}, \quad (T-t^{\frac{1}{2}})(T+t^{-\frac{1}{2}})=0.
\]
It follows from the last of these relations that the element $e=(t^{\frac{1}{2}}+t^{-\frac{1}{2}})^{-1}(T+t^{-\frac{1}{2}})$ is an idempotent in~$\mathcal{H}_{q,t}(A_1)$, and we define the \emph{spherical DAHA} of type~$A_1$ to be the algebra $\mathcal{SH}_{q,t}(A_1)=e\mathcal{H}_{q,t}(A_1)e\subset\mathcal{H}_{q,t}(A_1)$. We define elements 
\[
x=(X+X^{-1})e, \quad y=(Y+Y^{-1})e, \quad z=(q^{\frac{1}{2}}YX+q^{-\frac{1}{2}}X^{-1}Y^{-1})e
\]
of $\mathcal{SH}_{q,t}(A_1)$.

\begin{proposition}[\cite{AS24}, Proposition~2.11]
\label{prop:skeinDAHA}
There is a $\mathbb{C}$-algebra embedding $\Sk_{A,\lambda}(S_{1,1})\rightarrow\mathcal{SH}_{q,t}(A_1)$ mapping 
\[
A\mapsto q^{-\frac{1}{2}}, \quad \lambda\mapsto q^{-1}t, \quad \alpha\mapsto x, \quad \beta\mapsto y, \quad \gamma\mapsto z.
\]
\end{proposition}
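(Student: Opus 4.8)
The plan is to exploit the presentation from Proposition~\ref{prop:presentation}: since $\Sk_{A,\lambda}(S_{1,1})$ is there given by generators $\alpha,\beta,\gamma$ and explicit relations over $\mathbb{C}[A^{\pm1},\lambda^{\pm1}]$, a $\mathbb{C}$-algebra homomorphism into $\mathcal{SH}_{q,t}(A_1)$ with the stated values exists precisely when $x,y,z$ satisfy the images of those relations under $A\mapsto q^{-1/2}$, $\lambda\mapsto q^{-1}t$. Since this substitution sends $A^{-2}\mapsto q$, $A^{2}\mapsto q^{-1}$, hence $A^{-2}-A^{2}\mapsto q-q^{-1}$, the three $q$-commutator relations become $q^{1/2}xy-q^{-1/2}yx=(q-q^{-1})z$ and its two cyclic analogues, while the cubic relation becomes $q x^{2}+q^{-1}y^{2}+q z^{2}-q^{1/2}xyz=q+q^{-1}+q^{-1}t+qt^{-1}$. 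So everything reduces to checking these four identities in the spherical DAHA.

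For the verification I would pass to Cherednik's polynomial representation of $\mathcal{H}_{q,t}(A_1)$ on $\mathbb{C}_{q,t}[X^{\pm1}]$, in which $X$ acts by multiplication, $T$ by the Demazure--Lusztig operator $t^{1/2}s+(t^{1/2}-t^{-1/2})(X^{2}-1)^{-1}(s-1)$ with $s\colon X\mapsto X^{-1}$, and $Y$ by the associated affine intertwiner. The idempotent $e$ is the projector onto $s$-invariant Laurent polynomials, and on that subspace $x$, $y$, $z$ act as the standard Askey--Wilson/Macdonald $q$-difference operators; this is in fact one route to the statement that $\mathcal{SH}_{q,t}(A_1)$ is a form of the Askey--Wilson algebra. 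The two $q$-commutator identities (and their cyclic partner) are then a short computation on symmetric Laurent polynomials, and the cubic identity says that the Askey--Wilson Casimir $q x^{2}+q^{-1}y^{2}+q z^{2}-q^{1/2}xyz$ acts as the indicated scalar, which one pins down by applying it to the constant function $1$. Because the polynomial representation of $\mathcal{H}_{q,t}(A_1)$ is faithful over the field $\mathbb{C}_{q,t}$ and $e$ acts nontrivially, it restricts to a faithful representation of $\mathcal{SH}_{q,t}(A_1)$; hence the four identities, once checked there, genuinely hold in the spherical DAHA, and the homomorphism exists.

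Injectivity is the remaining point. I would take the ordered monomials $\alpha^{a}\beta^{b}\gamma^{c}$ with $c\in\{0,1\}$, which span $\Sk_{A,\lambda}(S_{1,1})$ over $\mathbb{C}[A^{\pm1},\lambda^{\pm1}]$ (the $q$-commutators reorder the generators, and the cubic relation solves for $\gamma^{2}$), and show their images in $\mathcal{SH}_{q,t}(A_1)$ are linearly independent over $\mathbb{C}$. This again follows from the polynomial representation: in the basis $\{X^{k}+X^{-k}\}_{k\ge 0}$ of symmetric Laurent polynomials the operators $x,y,z$ are triangular with controlled leading behaviour, so no nontrivial $\mathbb{C}$-linear combination of the $x^{a}y^{b}z^{c}$ can annihilate every $X^{k}+X^{-k}$. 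Alternatively, one can filter both algebras by total degree in the generators and observe that the map induces an injection of associated graded rings — the coordinate ring of the cubic surface mapping into that of the corresponding Calogero--Moser surface — whence injectivity lifts.

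The main obstacle is the cubic relation: expanding $z=(q^{1/2}YX+q^{-1/2}X^{-1}Y^{-1})e$ makes $z^{2}$ and $xyz$ genuinely long, and a brute-force check inside $\mathcal{H}_{q,t}(A_1)$ is delicate, so the Casimir-on-$1$ computation in the polynomial representation is the clean route. The injectivity step is then routine, modulo the standard faithfulness of the polynomial representation on the spherical subalgebra and the bookkeeping of triangular leading terms.
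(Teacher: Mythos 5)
The paper offers no proof of this proposition: it is imported verbatim from \cite{AS24}, Proposition~2.11, so there is no in-text argument to compare against. Your proposal is the natural reconstruction, and it is consistent with the machinery the paper does record: the operators you assign to $x$, $y$, $z$ on symmetric Laurent polynomials are exactly those of Proposition~\ref{prop:generatingoperators}, your translation of the four relations under $A\mapsto q^{-\frac{1}{2}}$, $\lambda\mapsto q^{-1}t$ is correct, and the faithfulness of the polynomial representation restricted to $e\mathcal{H}_{q,t}(A_1)e$ does follow as you say (if $a=eae$ kills $e\cdot\mathbb{C}_{q,t}[X^{\pm1}]$ then $aV=ae\,V=0$, so $a=0$). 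Two soft spots are worth flagging. First, to ``pin down the Casimir by applying it to $1$'' you must first justify that $qx^{2}+q^{-1}y^{2}+qz^{2}-q^{\frac{1}{2}}xyz$ acts by a scalar, which requires either verifying its centrality from the three $q$-commutators (itself a nontrivial computation) together with cyclicity of $\mathbb{C}_{q,t}[X^{\pm1}]^{\mathbb{Z}_2}$ over the subalgebra generated by $x$, or simply checking the operator identity directly; either is finite and fine, but the step is not free. Second, your ``alternatively'' for injectivity via associated graded rings is not right as stated: the total-degree filtration degenerates the cubic relation to $\alpha\beta\gamma=0$, so the associated graded is not the coordinate ring of the cubic surface, and moreover identifying the associated graded of the skein algebra presupposes the very basis statement you are trying to establish. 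You should therefore rely on your primary injectivity argument — spanning by $\alpha^{a}\beta^{b}\gamma^{c}$ with $c\in\{0,1\}$ (correct, by the reordering and $\gamma^{2}$-reduction you describe) plus linear independence of the images via leading-term triangularity on $\{X^{k}+X^{-k}\}$, together with the injectivity of $\mathbb{C}[A^{\pm1},\lambda^{\pm1}]\to\mathbb{C}_{q,t}$ — which does work, though the bookkeeping of leading coefficients for monomials of equal total degree deserves to be written out.
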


An important fact is that the DAHA has a faithful representation $\mathcal{H}_{q,t}(A_1)\rightarrow\End\mathbb{C}_{q,t}[X^{\pm1}]$ by $\mathbb{C}_{q,t}$-linear endomorphisms of the Laurent polynomial ring $\mathbb{C}_{q,t}[X^{\pm1}]$. We can restrict this representation to the spherical subalgebra to get a faithful representation $\mathcal{SH}_{q,t}(A_1)\rightarrow\End\mathbb{C}_{q,t}[X^{\pm1}]^{\mathbb{Z}_2}$ where the nontrivial generator of $\mathbb{Z}_2$ acts on the ring $\mathbb{C}_{q,t}[X^{\pm1}]$ by mapping $X\mapsto X^{-1}$. We can compose this representation with the embedding of Proposition~\ref{prop:skeinDAHA} and thereby obtain a $\mathbb{C}$-algebra embedding $\Phi:\Sk_{A,\lambda}(S_{1,1})\rightarrow\End\mathbb{C}_{q,t}[X^{\pm1}]^{\mathbb{Z}_2}$.

\begin{proposition}[\cite{AS24}, Proposition~2.12]
\label{prop:generatingoperators}
The embedding $\Phi:\Sk_{A,\lambda}(S_{1,1})\rightarrow\End\mathbb{C}_{q,t}[X^{\pm1}]^{\mathbb{Z}_2}$ maps $A\mapsto q^{-\frac{1}{2}}$, $\lambda\mapsto q^{-1}t$, and 
\begin{align*}
\alpha &\mapsto X+X^{-1}, \\
\beta &\mapsto V(X)\varpi+V(X^{-1})\varpi^{-1}, \\
\gamma &\mapsto q^{-\frac{1}{2}}X^{-1}V(X)\varpi+q^{-\frac{1}{2}}XV(X^{-1})\varpi^{-1},
\end{align*}
where 
\[
V(X)=\frac{t^{\frac{1}{2}}X-t^{-\frac{1}{2}}X^{-1}}{X-X^{-1}}
\]
and $\varpi$ is the operator defined by $(\varpi f)(X)=f(qX)$ for $f\in\mathbb{C}_{q,t}[X^{\pm1}]^{\mathbb{Z}_2}$.
\end{proposition}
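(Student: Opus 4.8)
The plan is to compute everything inside the faithful polynomial representation of $\mathcal{H}_{q,t}(A_1)$. Recall that in this representation $X$ acts by multiplication, $T$ acts by a Demazure--Lusztig operator, and $Y$ by a combination of $T$ with the shift $\varpi$; in an appropriate normalisation one may take
\[
T \;=\; t^{\frac12}\,s \;+\; \frac{t^{\frac12}-t^{-\frac12}}{X^2-1}\,(s-\mathrm{id}), \qquad (sf)(X)=f(X^{-1}),
\]
and $Y = \pi\circ T$ with $(\pi f)(X)=f(qX^{-1})$, so that $\pi^2=\mathrm{id}$ and $\pi = s\circ\varpi$. The Hecke relation makes $T$ diagonalisable with eigenvalue $t^{\frac12}$ on the symmetric Laurent polynomials and $-t^{-\frac12}$ on their complement, so $e=(t^{\frac12}+t^{-\frac12})^{-1}(T+t^{-\frac12})$ is exactly the projector onto $\mathbb{C}_{q,t}[X^{\pm1}]^{\mathbb{Z}_2}$. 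Thus a spherical element $eae$ acts on a symmetric $f$ by $f\mapsto e(a\cdot f)$, and in view of Proposition~\ref{prop:skeinDAHA} it suffices to identify the operators attached to $x$, $y$, $z$.

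For $x=(X+X^{-1})e$ there is nothing to do: multiplication by the $s$-invariant function $X+X^{-1}$ preserves $\mathbb{C}_{q,t}[X^{\pm1}]^{\mathbb{Z}_2}$, where $e$ is the identity, so $\alpha\mapsto X+X^{-1}$. For $y=(Y+Y^{-1})e$ I would apply $\pi T + T^{-1}\pi$ to a symmetric $f$: one has $Tf=t^{\frac12}f$ and $\pi f=\varpi^{-1}f$, hence $\pi T f = t^{\frac12}\varpi^{-1}f$, while $T^{-1}\pi f = T^{-1}\varpi^{-1}f$ is read off from the displayed formula for $T$ using $s\varpi^{-1}f=\varpi f$. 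Assembling the two pieces, a short manipulation of the single Demazure correction, recombined with the $t^{\pm\frac12}$ terms, shows that the coefficients of $\varpi$ and of $\varpi^{-1}$ are exactly $V(X)$ and $V(X^{-1})$; this is the classical identification of $(Y+Y^{-1})e$ with the Macdonald $q$-difference operator for $A_1$. Hence $\beta\mapsto V(X)\varpi + V(X^{-1})\varpi^{-1}$.

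For $\gamma$, rather than expand $z=(q^{\frac12}YX + q^{-\frac12}X^{-1}Y^{-1})e$ directly, I would use that the first relation of Proposition~\ref{prop:presentation}, after the substitution $A\mapsto q^{-\frac12}$, reads $q^{\frac12}\alpha\beta - q^{-\frac12}\beta\alpha = (q-q^{-1})\gamma$. Since $\Phi$ is an algebra homomorphism and $q-q^{-1}\neq 0$ in $\mathbb{C}_{q,t}$, the operator attached to $\gamma$ equals $\tfrac{1}{q-q^{-1}}\bigl(q^{\frac12}\,\Phi(\alpha)\Phi(\beta) - q^{-\frac12}\,\Phi(\beta)\Phi(\alpha)\bigr)$, with $\Phi(\alpha)$ and $\Phi(\beta)$ as just found. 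Substituting and using $\varpi\,X^{\pm1}=q^{\pm1}X^{\pm1}\,\varpi$ (so multiplication by $X^{\pm1}$ and the shifts commute up to a power of $q$), one collects the coefficients of $\varpi$ and of $\varpi^{-1}$ separately; in each the two terms combine into a common factor $q-q^{-1}$, which cancels the denominator and leaves $q^{-\frac12}X^{-1}V(X)\varpi + q^{-\frac12}XV(X^{-1})\varpi^{-1}$. That the remaining relations of Proposition~\ref{prop:presentation}, including the cubic, then hold is automatic, since $\Phi$ is the composite of the embedding of Proposition~\ref{prop:skeinDAHA} with the restriction of the polynomial representation; checking a few of them directly is a worthwhile sanity test.

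I expect the genuinely delicate step to be the computation of $\beta$: one must pin down the polynomial representation of $Y$ with precisely the right conventions — the power of $q$ inside $\pi$, the direction of $\varpi$, and the signs in the Hecke relation — so that the Demazure correction recombines to give exactly the numerator $t^{\frac12}X - t^{-\frac12}X^{-1}$ in $V(X)$ rather than a $q$- or $t$-rescaled variant. Once that is settled, the $x$ and $\gamma$ steps are short and largely self-checking; in particular, the emergence of $q-q^{-1}$ as a common factor in the $\gamma$-computation is a reassuring internal consistency check.
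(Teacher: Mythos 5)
Your proposal is correct: the paper itself gives no proof of this statement, citing \cite{AS24} (Proposition~2.12), and your argument is the standard verification one would expect there. I checked the key points: with your conventions $T=t^{\frac12}s+\frac{t^{\frac12}-t^{-\frac12}}{X^2-1}(s-1)$ and $Y=\pi T$, $(\pi f)(X)=f(qX^{-1})$, the paper's four DAHA relations hold, $t^{\frac12}+\frac{t^{\frac12}-t^{-\frac12}}{X^2-1}=V(X)$ and $t^{-\frac12}-\frac{t^{\frac12}-t^{-\frac12}}{X^2-1}=V(X^{-1})$, so $(Y+Y^{-1})e$ acts on symmetric Laurent polynomials exactly as $V(X)\varpi+V(X^{-1})\varpi^{-1}$; and in the $\gamma$-step the coefficients of $\varpi$ and $\varpi^{-1}$ in $q^{\frac12}\Phi(\alpha)\Phi(\beta)-q^{-\frac12}\Phi(\beta)\Phi(\alpha)$ are $q^{-\frac12}(q-q^{-1})X^{-1}V(X)$ and $q^{-\frac12}(q-q^{-1})XV(X^{-1})$, giving the stated formula after dividing by $q-q^{-1}$.
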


In addition to the curves illustrated in Figure~\ref{fig:skeingenerators}, we will be interested in the family of curves $\gamma_k$~($k\in\mathbb{Z}$) illustrated in Figure~\ref{fig:family}. Note that we have $\gamma_0=\beta$ and $\gamma_1=\gamma$. Under the polynomial representation, these curves map to a family of operators that we can describe explicitly.

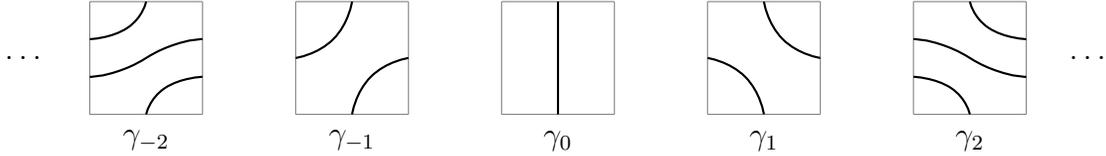
\begin{figure}[ht]
\begin{tikzpicture}
\clip(-1.85,-1.4) rectangle (1.3,1.3);
\coordinate (a) at (-0.75,-0.75);
\coordinate (b) at (-0.75,0.75);
\coordinate (c) at (0.75,0.75);
\coordinate (d) at (0.75,-0.75);
\draw[gray, thin] (a) -- (b);
\draw[gray, thin] (b) -- (c);
\draw[gray, thin] (c) -- (d);
\draw[gray, thin] (d) -- (a);
\draw[black, thick] plot [smooth, tension=1] coordinates { (0,0.75) (-0.25,0.4) (-0.75,0.25)};
\draw[black, thick] plot [smooth, tension=1] coordinates { (0.75,0.25) (0.3,0.15) (-0.3,-0.15) (-0.75,-0.25)};
\draw[black, thick] plot [smooth, tension=1] coordinates { (0.75,-0.25) (0.25,-0.4) (0,-0.75)};
\node at (0,-1.1) {$\gamma_{-2}$};
\node at (-1.6,0) {$\dots$};
\end{tikzpicture}
\begin{tikzpicture}
\clip(-1.3,-1.4) rectangle (1.3,1.3);
\coordinate (a) at (-0.75,-0.75);
\coordinate (b) at (-0.75,0.75);
\coordinate (c) at (0.75,0.75);
\coordinate (d) at (0.75,-0.75);
\draw[gray, thin] (a) -- (b);
\draw[gray, thin] (b) -- (c);
\draw[gray, thin] (c) -- (d);
\draw[gray, thin] (d) -- (a);
\draw[black, thick] plot [smooth, tension=1] coordinates { (0,0.75) (-0.25,0.25) (-0.75,0)};
\draw[black, thick] plot [smooth, tension=1] coordinates { (0.75,0) (0.25,-0.25) (0,-0.75)};
\node at (0,-1.1) {$\gamma_{-1}$};
\end{tikzpicture}
\begin{tikzpicture}
\clip(-1.3,-1.4) rectangle (1.3,1.3);
\coordinate (a) at (-0.75,-0.75);
\coordinate (b) at (-0.75,0.75);
\coordinate (c) at (0.75,0.75);
\coordinate (d) at (0.75,-0.75);
\draw[gray, thin] (a) -- (b);
\draw[gray, thin] (b) -- (c);
\draw[gray, thin] (c) -- (d);
\draw[gray, thin] (d) -- (a);
\draw[black, thick] (0,-0.75) -- (0,0.75);
\node at (0,-1.1) {$\gamma_0$};
\end{tikzpicture}
\begin{tikzpicture}
\clip(-1.3,-1.4) rectangle (1.3,1.3);
\coordinate (a) at (-0.75,-0.75);
\coordinate (b) at (-0.75,0.75);
\coordinate (c) at (0.75,0.75);
\coordinate (d) at (0.75,-0.75);
\draw[gray, thin] (a) -- (b);
\draw[gray, thin] (b) -- (c);
\draw[gray, thin] (c) -- (d);
\draw[gray, thin] (d) -- (a);
\draw[black, thick] plot [smooth, tension=1] coordinates { (0,0.75) (0.25,0.25) (0.75,0)};
\draw[black, thick] plot [smooth, tension=1] coordinates { (-0.75,0) (-0.25,-0.25) (0,-0.75)};
\node at (0,-1.1) {$\gamma_1$};
\end{tikzpicture}
\begin{tikzpicture}
\clip(-1.3,-1.4) rectangle (1.85,1.3);
\coordinate (a) at (-0.75,-0.75);
\coordinate (b) at (-0.75,0.75);
\coordinate (c) at (0.75,0.75);
\coordinate (d) at (0.75,-0.75);
\draw[gray, thin] (a) -- (b);
\draw[gray, thin] (b) -- (c);
\draw[gray, thin] (c) -- (d);
\draw[gray, thin] (d) -- (a);
\draw[black, thick] plot [smooth, tension=1] coordinates { (0,0.75) (0.25,0.4) (0.75,0.25)};
\draw[black, thick] plot [smooth, tension=1] coordinates { (-0.75,0.25) (-0.3,0.15) (0.3,-0.15) (0.75,-0.25)};
\draw[black, thick] plot [smooth, tension=1] coordinates { (-0.75,-0.25) (-0.25,-0.4) (0,-0.75)};
\node at (0,-1.1) {$\gamma_2$};
\node at (1.6,0) {$\dots$};
\end{tikzpicture}
\caption{A family of skein algebra elements.\label{fig:family}}
\end{figure}

\begin{lemma}[\cite{AS24}, Lemma~2.13]
\label{lem:curveoperators}
The embedding $\Phi:\Sk_{A,\lambda}(S_{1,1})\rightarrow\End\mathbb{C}_{q,t}[X^{\pm1}]^{\mathbb{Z}_2}$ maps 
\[
\gamma_k\mapsto q^{-\frac{k}{2}}X^{-k}V(X)\varpi+q^{-\frac{k}{2}}X^kV(X^{-1})\varpi^{-1}
\]
where $V$ and~$\varpi$ are defined as in Proposition~\ref{prop:generatingoperators}.
\end{lemma}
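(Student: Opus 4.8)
\textbf{Proof proposal for Lemma~\ref{lem:curveoperators}.}
The plan is to compute the operators $\Phi(\gamma_k)$ recursively, using the skein-theoretic relation that ties the curve $\gamma_k$ to its neighbours $\gamma_{k-1}$ and $\gamma_{k+1}$, together with the known base cases. First I would record the two base cases already available from Proposition~\ref{prop:generatingoperators}: since $\gamma_0=\beta$ and $\gamma_1=\gamma$, we have
\[
\Phi(\gamma_0)=V(X)\varpi+V(X^{-1})\varpi^{-1},\qquad
\Phi(\gamma_1)=q^{-\frac12}X^{-1}V(X)\varpi+q^{-\frac12}XV(X^{-1})\varpi^{-1},
\]
which match the claimed formula at $k=0$ and $k=1$. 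Then I would establish a three-term recursion among the skein elements $\gamma_k$. Geometrically, the curves $\gamma_{k-1}$, $\alpha$, and $\gamma_k$ form a once-punctured-torus ``triangle'' just like $\beta$, $\alpha$, $\gamma$ do, so resolving the single crossing of $\alpha$ with $\gamma_k$ via the Kauffman relation of Figure~\ref{subfig:resolution} yields a relation of the form
\[
A^{-1}\gamma_k\,\alpha-A\,\alpha\,\gamma_k=(A^{-2}-A^2)\gamma_{k+1},
\]
generalizing the third relation of Proposition~\ref{prop:presentation} (which is the case $k=1$, and whose $\gamma\mapsto\gamma_0=\beta$ analogue is the $k=0$ case read backwards). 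Equivalently one gets the symmetric second-order recursion $\gamma_{k+1}+\gamma_{k-1}=\Phi(\alpha)\cdot(\text{twist})\cdots$; in any case it suffices to have one first-order relation expressing $\gamma_{k+1}$ through $\gamma_k$ and $\alpha$.

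Next I would simply push this relation through the algebra embedding $\Phi$ and verify that the proposed closed form is consistent with it. Write $F_k=q^{-\frac k2}X^{-k}V(X)\varpi+q^{-\frac k2}X^kV(X^{-1})\varpi^{-1}$ for the candidate operator. Using $\Phi(\alpha)=X+X^{-1}$, $\Phi(A)=q^{-\frac12}$, and the commutation rule $\varpi X=qX\varpi$ (equivalently $\varpi X^{-1}=q^{-1}X^{-1}\varpi$), one computes
\[
q^{-\frac12}F_k(X+X^{-1})-q^{-\frac12}(X+X^{-1})F_k.
\]
Collecting the $\varpi$ and $\varpi^{-1}$ parts separately, the $\varpi$-coefficient of this expression works out to $q^{-\frac{k+1}{2}}V(X)\big(q^{-1}X^{-k-1}+q^{-1}X^{-k+1}-X^{-k+1}-X^{-k-1}\big)\varpi = (q^{-1}-1)q^{-\frac{k+1}{2}}X^{-k-1}V(X)(1+X^{2})\cdots$; after factoring one obtains exactly $(q^{-1}-q)\,q^{-\frac{k+1}{2}}X^{-(k+1)}V(X)$, which is $(A^{-2}-A^2)$ times the $\varpi$-coefficient of $F_{k+1}$, and symmetrically for $\varpi^{-1}$. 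Hence $F_k$ satisfies the same first-order recursion as $\Phi(\gamma_k)$; since $\Phi$ is injective (the representation $\Phi$ of Proposition~\ref{prop:generatingoperators} is an embedding) and the recursion together with the two base cases $k=0,1$ determines all $\gamma_k$ uniquely, we conclude $\Phi(\gamma_k)=F_k$ for all $k\in\mathbb{Z}$ by induction in both directions.

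The step I expect to be the main obstacle is establishing the three-term (or first-order) recursion for the $\gamma_k$ at the skein-algebra level with the correct powers of $A$: one must argue carefully from the pictures in Figure~\ref{fig:family} that crossing $\alpha$ with $\gamma_k$ and applying the resolution relation of Figure~\ref{subfig:resolution} produces precisely $\gamma_{k\pm1}$ (with the right framing and no stray contractible loops contributing factors of $-(A^2+A^{-2})$), rather than some other curve in the mapping-class-group orbit. This is really a statement about the action of a Dehn twist on the isotopy classes, and I would pin it down either by an explicit isotopy in the square picture or, more cleanly, by invoking the $\mathrm{SL}_2(\mathbb{Z})$-equivariance of $\Phi$ (cf.\ Proposition~\ref{prop:SL2Zequivariance}) to identify $\gamma_k$ as the image of $\beta$ under the $k$-th power of a fixed twist $\tau_+$, so that the recursion is inherited from the single relation $[\gamma,\alpha]_A=(A^{-2}-A^2)\beta$ of Proposition~\ref{prop:presentation}. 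Once the recursion is in hand, the remaining verification is the routine $q$-shift computation sketched above.
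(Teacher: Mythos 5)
The paper itself does not prove this lemma; it imports it from~\cite{AS24}, so I can only assess your argument on its own terms. Your overall strategy --- reduce to the two base cases $\gamma_0,\gamma_1$ from Proposition~\ref{prop:generatingoperators}, establish a skein-level recursion tying $\gamma_{k\pm1}$ to $\gamma_k$ and $\alpha$ via the resolution of their single crossing, and check that the candidate operators $F_k$ satisfy the image of that recursion --- is the right one and does work. But two of your concrete steps are wrong as written. First, the recursion you state, $A^{-1}\gamma_k\alpha-A\alpha\gamma_k=(A^{-2}-A^2)\gamma_{k+1}$, contradicts the very relation you say it generalizes: at $k=1$ the third relation of Proposition~\ref{prop:presentation} reads $A^{-1}\gamma_1\alpha-A\alpha\gamma_1=(A^{-2}-A^2)\gamma_0$, with the index going \emph{down}. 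The correct statements, fixed by the product-to-sum resolutions $\gamma_k\alpha=A\gamma_{k+1}+A^{-1}\gamma_{k-1}$ and $\alpha\gamma_k=A\gamma_{k-1}+A^{-1}\gamma_{k+1}$ (which are consistent with all the relations of Proposition~\ref{prop:presentation} at $k=0,1$), are $[\gamma_k,\alpha]_A=(A^{-2}-A^2)\gamma_{k-1}$ and $[\alpha,\gamma_k]_A=(A^{-2}-A^2)\gamma_{k+1}$. Better still, use $\gamma_{k+1}=A\,\alpha\gamma_k-A^2\gamma_{k-1}$, which avoids having to divide by $A^{-2}-A^2$ (not a unit in $\mathbb{C}[A^{\pm1},\lambda^{\pm1}]$) when you run the induction at the skein level.

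Second, your operator verification is garbled: you write the $A$-commutator as $q^{-\frac12}F_k(X+X^{-1})-q^{-\frac12}(X+X^{-1})F_k$, but with $A\mapsto q^{-\frac12}$ the bracket $[F_k,\Phi(\alpha)]_A$ is $q^{\frac12}F_k(X+X^{-1})-q^{-\frac12}(X+X^{-1})F_k$, and the intermediate expression you display does not simplify to the single monomial you claim (a sum $X^{-k-1}+X^{-k+1}$ cannot ``factor'' into $X^{-(k+1)}$ alone). The correct computation, using $\varpi X=qX\varpi$, gives for the $\varpi$-part
\[
q^{-\frac{k}{2}}X^{-k}V(X)\bigl(q^{\frac32}X+q^{-\frac12}X^{-1}-q^{-\frac12}X-q^{-\frac12}X^{-1}\bigr)\varpi
=(q-q^{-1})\,q^{-\frac{k-1}{2}}X^{-(k-1)}V(X)\varpi,
\]
which is $(A^{-2}-A^2)$ times the $\varpi$-part of $F_{k-1}$ (not $F_{k+1}$), and symmetrically for $\varpi^{-1}$; equivalently $A\Phi(\alpha)F_k-A^2F_{k-1}=F_{k+1}$. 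With these corrections the induction in both directions from $k=0,1$ closes up (injectivity of $\Phi$ is not actually needed: you are comparing two sequences of operators satisfying the same recursion and base cases). Your suggested alternative of deducing the recursion from $\tau_+(\gamma_k)=\gamma_{k-1}$, $\tau_+(\alpha)=\alpha$ applied to the single relation $[\gamma_1,\alpha]_A=(A^{-2}-A^2)\gamma_0$ is a clean way to settle the sign/framing worry you flag, and is consistent with how the paper uses $\tau_+$ in Section~\ref{sec:SL2Zsymmetries}.
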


\subsection{$\mathrm{SL}_2(\mathbb{Z})$ symmetries}
\label{sec:SL2Zsymmetries}

Finally, we will study the symmetries of the skein algebra. In general, the mapping class group $\Mod(S)$ of a surface~$S$ acts on the skein algebra~$\Sk_A(S)$. In the case of a once-punctured torus, the mapping class group $\Mod(S_{1,1})\cong\mathrm{SL}_2(\mathbb{Z})$ is generated by transformations $\tau_+$, $\tau_-$, and~$\sigma$, where $\tau_+$ is a Dehn~twist around the loop~$\alpha$, $\tau_-$ is a Dehn~twist around~$\gamma_0=\beta$, and $\sigma=\tau_+\tau_-^{-1}\tau_+$. These generators act on $\Sk_{A,\lambda}(S_{1,1})$ by 
\begin{alignat*}{4}
\tau_+&: \quad &&\alpha\mapsto\alpha, \quad &&\beta\mapsto\gamma_{-1}, \quad &&\gamma\mapsto\beta, \\
\tau_-&: \quad &&\alpha\mapsto\gamma_{-1}, \quad &&\beta\mapsto\beta, \quad &&\gamma\mapsto\alpha, \\
\sigma&: \quad &&\alpha\mapsto\beta, \quad &&\beta\mapsto\alpha, \quad &&\gamma\mapsto\gamma_{-1}.
\end{alignat*}

As explained in Section~0.4.3 of~\cite{C05}, there is also an $\mathrm{SL}_2(\mathbb{Z})$-action on~$\mathcal{H}_{q,t}(A_1)$, which is given on generators by 
\begin{alignat*}{4}
\tau_+&: \quad &&X\mapsto X, \quad &&Y\mapsto q^{-\frac{1}{2}}XY, \quad &&T\mapsto T, \\
\tau_-&: \quad &&X\mapsto q^{\frac{1}{2}}YX, \quad &&Y\mapsto Y, \quad &&T\mapsto T, \\
\sigma&: \quad &&X\mapsto Y^{-1}, \quad &&Y\mapsto XT^2, \quad &&T\mapsto T.
\end{alignat*}
This action preserves the spherical subalgebra $\mathcal{SH}_{q,t}(A_1)\subset\mathcal{H}_{q,t}(A_1)$. By equation~2.53 of~\cite{GKNPS23}, the generators of $\mathrm{SL}_2(\mathbb{Z})$ act by 
\begin{alignat*}{4}
\tau_+&: \quad &&x\mapsto x, \quad &&y\mapsto z', \quad &&z\mapsto y, \\
\tau_-&: \quad &&x\mapsto z', \quad &&y\mapsto y, \quad &&z\mapsto x, \\
\sigma&: \quad &&x\mapsto y, \quad &&y\mapsto x, \quad &&z\mapsto z',
\end{alignat*}
where $z'=\frac{xy+yx}{q^{\frac{1}{2}}+q^{-\frac{1}{2}}}-z$.

\begin{proposition}
\label{prop:SL2Zequivariance}
The embedding $\Sk_{A,\lambda}(S_{1,1})\rightarrow\mathcal{SH}_{q,t}(A_1)$ of Proposition~\ref{prop:skeinDAHA} is $\mathrm{SL}_2(\mathbb{Z})$-equivariant.
\end{proposition}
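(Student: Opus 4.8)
The plan is to reduce the statement to a single identity and then verify that identity inside the faithful polynomial representation. Write $\iota\colon\Sk_{A,\lambda}(S_{1,1})\to\mathcal{SH}_{q,t}(A_1)$ for the embedding of Proposition~\ref{prop:skeinDAHA}. Since $\mathrm{SL}_2(\mathbb{Z})$ is generated by $\tau_+$, $\tau_-$, and~$\sigma$, and since these act by algebra automorphisms fixing $A$ and $\lambda$ (respectively $q$ and $t$) while $\iota$ is an algebra homomorphism with $\iota(\alpha)=x$, $\iota(\beta)=y$, $\iota(\gamma)=z$, it suffices to check that $\iota(g\cdot a)=g\cdot\iota(a)$ when $g$ ranges over $\tau_+,\tau_-,\sigma$ and $a$ ranges over the generators $\alpha,\beta,\gamma$. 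Comparing the table for the $\mathrm{SL}_2(\mathbb{Z})$-action on $\{\alpha,\beta,\gamma\}$ in Section~\ref{sec:SL2Zsymmetries} with the table for the action on $\{x,y,z\}$, each of these nine equalities is immediate except in the three cases where the skein side produces $\gamma_{-1}$ exactly where the DAHA side produces $z'$. Hence the proposition reduces to the single claim
\[
\iota(\gamma_{-1})=z'=\frac{xy+yx}{q^{\frac{1}{2}}+q^{-\frac{1}{2}}}-z .
\]

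To prove this, I would apply the faithful polynomial representation $\rho\colon\mathcal{SH}_{q,t}(A_1)\to\End\mathbb{C}_{q,t}[X^{\pm1}]^{\mathbb{Z}_2}$, which satisfies $\rho\circ\iota=\Phi$ and hence $\rho(x)=\Phi(\alpha)$, $\rho(y)=\Phi(\beta)$, $\rho(z)=\Phi(\gamma)$. Since $\rho$ is injective and an algebra map, $\iota(\gamma_{-1})=z'$ is equivalent to the operator identity
\[
\Phi(\gamma_{-1})=\frac{\Phi(\alpha)\Phi(\beta)+\Phi(\beta)\Phi(\alpha)}{q^{\frac{1}{2}}+q^{-\frac{1}{2}}}-\Phi(\gamma)
\]
in $\End\mathbb{C}_{q,t}[X^{\pm1}]^{\mathbb{Z}_2}$. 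Here the left-hand side is read off from Lemma~\ref{lem:curveoperators} with $k=-1$, namely $q^{\frac{1}{2}}XV(X)\varpi+q^{\frac{1}{2}}X^{-1}V(X^{-1})\varpi^{-1}$, and every operator on the right is the explicit combination of multiplication-by-Laurent-monomial operators with the shift $\varpi$ from Proposition~\ref{prop:generatingoperators}. One then expands the right-hand side using $\varpi\, m(X)=m(qX)\,\varpi$, collects the coefficients of $\varpi$ and of $\varpi^{-1}$ (no other powers of $\varpi$ occur, because $\Phi(\alpha)$ is a pure multiplication operator), uses the identities $\frac{1+q}{q^{1/2}+q^{-1/2}}=q^{1/2}$ and $\frac{1+q^{-1}}{q^{1/2}+q^{-1/2}}=q^{-1/2}$, and observes that subtracting $\Phi(\gamma)$ cancels exactly the superfluous monomials and leaves $\Phi(\gamma_{-1})$. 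As a variant one can avoid operators altogether: resolving the single transverse intersection of the curves $\alpha$ and~$\beta$ by the Kauffman bracket relation of Figure~\ref{fig:skein} expresses both $\alpha\beta$ and~$\beta\alpha$ as $\mathbb{C}[A^{\pm1}]$-linear combinations of $\gamma=\gamma_1$ and~$\gamma_{-1}$; combining this with the first relation of Proposition~\ref{prop:presentation} yields $\gamma_{-1}=(A+A^{-1})^{-1}(\alpha\beta+\beta\alpha)-\gamma$ in $\Sk_{A,\lambda}(S_{1,1})$, and applying $\iota$ (note $A+A^{-1}=q^{\frac{1}{2}}+q^{-\frac{1}{2}}$) gives the claim.

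The only real content lies in this identification $\iota(\gamma_{-1})=z'$; the first paragraph is a mechanical comparison of the two tables, and no group relations of $\mathrm{SL}_2(\mathbb{Z})$ ever need to be checked, since both actions are taken as given (the skein action through its definition by Dehn twists, the spherical DAHA action of Cherednik as recorded in equation~2.53 of~\cite{GKNPS23}). The main obstacle is therefore the careful bookkeeping in the operator computation---tracking the powers of $q^{\frac{1}{2}}$ that appear when $\varpi$ is commuted past a monomial, and checking that the terms coming from $\Phi(\gamma)$ cancel precisely---or, in the skein-theoretic variant, correctly matching each of the two smoothings of the product $\alpha\cdot\beta$ with $\gamma$ or $\gamma_{-1}$ and with the right power of $A$, for which the already-proven relations of Proposition~\ref{prop:presentation} provide a convenient consistency check.
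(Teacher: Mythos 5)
Your proposal is correct. The reduction to the single identity $\iota(\gamma_{-1})=z'$ together with the skein-theoretic ``variant'' at the end --- resolving the crossing of $\alpha$ and $\beta$ to obtain $\gamma_{-1}=\frac{\alpha\beta+\beta\alpha}{A+A^{-1}}-\gamma$ and then applying $\iota$ (with $A+A^{-1}\mapsto q^{\frac{1}{2}}+q^{-\frac{1}{2}}$) --- is exactly the paper's proof. The operator computation in $\End\mathbb{C}_{q,t}[X^{\pm1}]^{\mathbb{Z}_2}$ that you present as the primary route also checks out (the coefficients of $\varpi$ and $\varpi^{-1}$ cancel as you describe), but it is more laborious than necessary and the paper dispenses with it entirely.
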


\begin{proof}
The skein relation in Figure~\ref{subfig:resolution} implies that $\gamma_{-1}=\frac{\alpha\beta+\beta\alpha}{A+A^{-1}}-\gamma$. Hence, by Proposition~\ref{prop:skeinDAHA}, the embedding maps $\alpha\mapsto x$, $\beta\mapsto y$, $\gamma\mapsto z$, $\gamma_{-1}\mapsto z'$. The desired statement then follows from the formulas given above for the $\mathrm{SL}_2(\mathbb{Z})$-actions on~$\Sk_{A,\lambda}(S_{1,1})$ and~$\mathcal{SH}_{q,t}(A_1)$.
\end{proof}

Finally, note that we have three involutions $\xi_1$,~$\xi_2$,~$\xi_3:\Sk_{A,\lambda}(S_{1,1})\rightarrow\Sk_{A,\lambda}(S_{1,1})$ defined on the generators $\alpha$, $\beta$, $\gamma$ as in Section~\ref{sec:StatementOfTheMainResult}.

\begin{proposition}
\label{prop:skeindualities}
$\tau_+$, $\tau_-$, and $\sigma$ provide isomorphisms between the $\xi_i$-invariant subalgebras of~$\Sk_{A,\lambda}(S_{1,1})$ as in Figure~\ref{fig:skeindualities}.
\end{proposition}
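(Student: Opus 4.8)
The plan is to reduce the proposition to a short list of identities among automorphisms of $\Sk_{A,\lambda}(S_{1,1})$ and then to check those on the generators $\alpha,\beta,\gamma$. The starting point is the elementary observation that if $g$ is an algebra automorphism of $\Sk_{A,\lambda}(S_{1,1})$ and $\xi,\xi'$ are involutive automorphisms satisfying $g\circ\xi=\xi'\circ g$, then $g$ restricts to an algebra isomorphism $\Sk_{A,\lambda}(S_{1,1})^{\xi}\xrightarrow{\sim}\Sk_{A,\lambda}(S_{1,1})^{\xi'}$: indeed $\xi(a)=a$ if and only if $\xi'(g(a))=g(a)$, and $g^{-1}$, which conjugates $\xi'$ back to $\xi$, supplies the inverse map. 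So it suffices to establish, for the six edges of Figure~\ref{fig:skeindualities}, the corresponding conjugation identities: $\tau_+\xi_2=\xi_2\tau_+$ and $\tau_+\xi_1=\xi_3\tau_+$ for the two $\tau_+$-edges, $\tau_-\xi_1=\xi_1\tau_-$ and $\tau_-\xi_3=\xi_2\tau_-$ for the two $\tau_-$-edges, and $\sigma\xi_3=\xi_3\sigma$ and $\sigma\xi_1=\xi_2\sigma$ for the two $\sigma$-edges.

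Since $\tau_+,\tau_-,\sigma,\xi_1,\xi_2,\xi_3$ are all algebra automorphisms, each of these six identities need only be verified on $\alpha,\beta,\gamma$, using the explicit formulas of Section~\ref{sec:SL2Zsymmetries}. The only ingredient needed beyond those formulas is the action of the $\xi_i$ on the curve $\gamma_{-1}$, and this follows from the identity $\gamma_{-1}=\frac{\alpha\beta+\beta\alpha}{A+A^{-1}}-\gamma$ used in the proof of Proposition~\ref{prop:SL2Zequivariance}: as each $\xi_i$ fixes $A$ and $\lambda$, one reads off $\xi_1(\gamma_{-1})=-\gamma_{-1}$, $\xi_2(\gamma_{-1})=-\gamma_{-1}$, and $\xi_3(\gamma_{-1})=\gamma_{-1}$. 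With these values, each identity is a one-line check; for example, on the $\tau_+$-edge joining $\Sk_{A,\lambda}(S_{1,1})^{\xi_1}$ and $\Sk_{A,\lambda}(S_{1,1})^{\xi_3}$ one has $(\tau_+\xi_1)(\alpha)=-\alpha=(\xi_3\tau_+)(\alpha)$, $(\tau_+\xi_1)(\beta)=\gamma_{-1}=\xi_3(\gamma_{-1})=(\xi_3\tau_+)(\beta)$, and $(\tau_+\xi_1)(\gamma)=-\beta=\xi_3(\beta)=(\xi_3\tau_+)(\gamma)$, and the other five edges are handled the same way.

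The hard part, such as it is, is only the careful bookkeeping of signs and of composition order across the six cases---in particular the signs in $\xi_i(\gamma_{-1})=\pm\gamma_{-1}$; there is no conceptual obstacle. It is nonetheless worth recording why the picture takes the form of Figure~\ref{fig:skeindualities}. One checks that $\xi_1,\xi_2,\xi_3$ commute and that $\{1,\xi_1,\xi_2,\xi_3\}$ is a Klein four-subgroup of $\mathrm{Aut}\big(\Sk_{A,\lambda}(S_{1,1})\big)$; the six identities above then say precisely that the $\mathrm{SL}_2(\mathbb{Z})$-action normalizes this subgroup and acts on its three nontrivial elements through the natural map $\mathrm{SL}_2(\mathbb{Z})\to\mathrm{SL}_2(\mathbb{F}_2)\cong S_3$, with $\tau_+,\tau_-,\sigma$ going to the three transpositions. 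Concretely, if one identifies $\xi_1,\xi_2,\xi_3$ with the nontrivial homomorphisms $H_1(S_{1,1};\mathbb{Z}/2)\to\{\pm1\}$ that are trivial on $[\beta]$, on $[\alpha]$, and on $[\gamma]$ respectively (consistently with $[\gamma]=[\alpha]+[\beta]$), then the Dehn twist $\tau_+$ about $\alpha$ fixes the line spanned by $[\alpha]$ and interchanges those spanned by $[\beta]$ and $[\gamma]$, so it fixes $\Sk_{A,\lambda}(S_{1,1})^{\xi_2}$ and interchanges $\Sk_{A,\lambda}(S_{1,1})^{\xi_1}$ and $\Sk_{A,\lambda}(S_{1,1})^{\xi_3}$; the same mechanism governs $\tau_-$ and $\sigma$, reproducing all the loops and double arrows of Figure~\ref{fig:skeindualities}.
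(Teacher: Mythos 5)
Your proof is correct and follows essentially the same route as the paper: both reduce the claim to checking, on the generators $\alpha,\beta,\gamma$, that each of $\tau_+,\tau_-,\sigma$ conjugates one $\xi_i$ into another, using $\gamma_{-1}=\frac{\alpha\beta+\beta\alpha}{A+A^{-1}}-\gamma$ to determine the signs $\xi_i(\gamma_{-1})=\pm\gamma_{-1}$. Your formulation via the identities $g\circ\xi=\xi'\circ g$ is a slightly cleaner packaging of the paper's argument, and the closing remark identifying the action with $\mathrm{SL}_2(\mathbb{Z})\to\mathrm{SL}_2(\mathbb{F}_2)\cong S_3$ is a pleasant (correct) explanation of why the picture looks as it does, though not needed for the proof.
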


\begin{proof}
If $p\in\Sk_{A,\lambda}(S_{1,1})^{\xi_1}$ then, by Proposition~\ref{prop:presentation}, $p$ can be written as a $\xi_1$-invariant polynomial in $\alpha$, $\beta$, and $\gamma$ with coefficients in~$\mathbb{C}[A^{\pm1},\lambda^{\pm1}]$. We have $\tau_+(\alpha)=\alpha$, $\tau_+(\beta)=\gamma_{-1}$, $\tau_+(\gamma)=\beta$, and we know that $\gamma_{-1}=\frac{\alpha\beta+\beta\alpha}{A+A^{-1}}-\gamma$ by the skein relation in Figure~\ref{subfig:resolution}. Hence $\tau_+$ takes $p$ to a $\xi_3$-invariant polynomial $\tau_+(p)\in\Sk_{A,\lambda}(S_{1,1})^{\xi_3}$. In a similar way, one can check that all the arrows in Figure~\ref{fig:skeindualities} map between invariant subalgebras.
\end{proof}

\section{Quantized $K$-theoretic Coulomb branches}

In this section, we review the notion of a quantized $K$-theoretic Coulomb branch from the work of Braverman, Finkelberg, and Nakajima~\cite{BFN18}.

\subsection{The variety of triples}

In the following, we adopt the standard notation $\mathcal{K}=\mathbb{C}(\!( z)\!)$ and $\mathcal{O}=\mathbb{C}\llbracket z\rrbracket$. Given a complex, connected, reductive algebraic group~$G$, we write $G_{\mathcal{K}}$ and $G_\mathcal{O}$ for the $\mathcal{K}$- and $\mathcal{O}$-valued points of~$G$, respectively. Then the \emph{affine Grassmannian} of~$G$ is defined as 
\[
\Gr_G\coloneqq G_{\mathcal{K}}/G_{\mathcal{O}}.
\]
This object has the structure of an ind-scheme representing an explicit functor of points (see Section~2 of~\cite{BFN18}). In the following, we equip $\Gr_G$ with the reduced ind-scheme structure.

In~\cite{BFN18} Braverman, Finkelberg, and Nakajima consider the following natural generalization of the affine Grassmannian. Given a complex representation~$N$ of~$G$, we write $N_{\mathcal{K}}=N\otimes_{\mathbb{C}}\mathcal{K}$ and $N_{\mathcal{O}}=N\otimes_{\mathbb{C}}\mathcal{O}$. We consider the space 
\[
\mathcal{T}_{G,N}\coloneqq G_{\mathcal{K}}\times_{G_{\mathcal{O}}}N_{\mathcal{O}}
\]
where the right hand side is the quotient of $G_{\mathcal{K}}\times N_{\mathcal{O}}$ by the equivalence relation~$\sim$ defined by $(g,n)\sim(gb^{-1},bn)$ for $b\in G_{\mathcal{O}}$. Let us write $[g,n]$ for the class of a pair $(g,n)\in G_{\mathcal{K}}\times N_{\mathcal{O}}$ in this quotient space. Given such a pair one has in general $gn\in N_{\mathcal{K}}$, and the \emph{variety of triples} is the subspace 
\[
\mathcal{R}_{G,N}\coloneqq\{[g,n]\in\mathcal{T}_{G,N}:gn\in N_{\mathcal{O}}\}
\]
of~$\mathcal{T}_{G,N}$. Like the affine Grassmannian, the spaces $\mathcal{T}_{G,N}$ and $\mathcal{R}_{G,N}$ are ind-schemes, and in the following we take the reduced ind-scheme structure. Using the functor of points description in Section~2 of~\cite{BFN18}, one can view $\mathcal{R}_{G,N}$ as a moduli space of triples of gauge theoretic data on a formal disk, justifying the terminology. When there is no possibility of confusion, we will sometimes omit the subscripts and write $\mathcal{T}=\mathcal{T}_{G,N}$ and $\mathcal{R}=\mathcal{R}_{G,N}$.

Note that there is an action of $G_{\mathcal{K}}$ on the space $\mathcal{T}_{G,N}$ by left multiplication, and that the subgroup~$G_{\mathcal{O}}$ preserves $\mathcal{R}_{G,N}$. Below we will construct the equivariant $K$-theory of $\mathcal{R}_{G,N}$ for an extension of this action. We will then define the quantized Coulomb branch in terms of this $K$-theory, following~\cite{BFN18}.

\subsection{Equivariant $K$-theory}

To define the equivariant $K$-theory of the variety of triples, we use the fact that there is a natural $G_\mathcal{O}$-action on the affine Grassmannian $\Gr_G$ and a decomposition 
\[
\Gr_G=\bigsqcup_{\lambda\in X_+}\Gr_G^\lambda
\]
of this space into $G_{\mathcal{O}}$-orbits $\Gr_G^\lambda$ parametrized by dominant coweights $\lambda$ of~$G$. The closure $\overline{\Gr}_G^\lambda$ of~$\Gr_G^\lambda$ is a projective variety. Moreover, using the standard partial order on the set~$X_+$ of dominant coweights, one has a decomposition $\overline{\Gr}_G^\lambda=\bigsqcup_{\mu\leq\lambda}\Gr_G^\mu$. We consider the spaces $\mathcal{T}_{\leq\lambda}=\pi^{-1}\left(\overline{\Gr}_G^\lambda\right)$, where $\pi:\mathcal{T}\rightarrow\Gr_G$ is the projection, and we define $\mathcal{R}_{\leq\lambda}=\mathcal{R}\cap\mathcal{T}_{\leq\lambda}$.

We can further decompose the spaces $\mathcal{T}_{\leq\lambda}$ and $\mathcal{R}_{\leq\lambda}$ as follows. For any integer $d>0$, we form the quotient $\mathcal{T}^d=G_{\mathcal{K}}\times_{G_{\mathcal{O}}}(N_{\mathcal{O}}/z^dN_{\mathcal{O}})$ of~$\mathcal{T}$ by fiberwise translation by $z^dN_{\mathcal{O}}$. Then for~$e>d$ there is an induced map $p_d^e:\mathcal{T}^e\rightarrow\mathcal{T}^d$, and we recover the space~$\mathcal{T}$ as the inverse limit of this system. We will write $\mathcal{T}_{\leq\lambda}^d$ for the image of~$\mathcal{T}_{\leq\lambda}$ in the quotient~$\mathcal{T}^d$. Then the $p_d^e$ restrict to maps $p_d^e:\mathcal{T}_{\leq\lambda}^e\rightarrow\mathcal{T}_{\leq\lambda}^d$, and we recover $\mathcal{T}_{\leq\lambda}$ as the inverse limit of this system.

One can show that for $d\in\mathbb{Z}$ greater than a constant depending on~$\lambda$, the action of $z^dN_{\mathcal{O}}$ on~$\mathcal{T}$ stabilizes~$\mathcal{R}_{\leq\lambda}$. Hence for $d\gg0$ we can take the quotient $\mathcal{R}_{\leq\lambda}^d$ of $\mathcal{R}_{\leq\lambda}$ by this action. This quotient is a closed subset of $\mathcal{T}_{\leq\lambda}^d$. For $e>d$ the $p_d^e$ restrict to maps $p_d^e:\mathcal{R}_{\leq\lambda}^e\rightarrow\mathcal{R}_{\leq\lambda}^d$, and we recover $\mathcal{R}_{\leq\lambda}$ as the inverse limit of this system.

Now suppose that the action of~$G$ on~$N$ extends to an action of $G\times\mathbb{C}^*$ on~$N$ where the $\mathbb{C}^*$-factor acts by scalar multiplication. In physics, the group $G$ is called a \emph{gauge group} while $\mathbb{C}^*$ is called a \emph{flavor symmetry group}. Then we get a $(G\times\mathbb{C}^*)_{\mathcal{O}}\rtimes\mathbb{C}^*$-action on~$\mathcal{R}$ where the second $\mathbb{C}^*$-factor acts by rescaling the variable~$z$. This gives a $(G\times\mathbb{C}^*)_{\mathcal{O}}\rtimes\mathbb{C}^*$-action on $\mathcal{R}_{\leq\lambda}^d$. It factors through an action of the quotient $(G_i\times\mathbb{C}^*)\rtimes\mathbb{C}^*$ for sufficiently large~$i$ where $G_i=G(\mathcal{O}/z^i\mathcal{O})$. The space $\mathcal{R}_{\leq\lambda}^d$ is an ordinary variety, so for an algebraic group $\Gamma$, we can talk about its $\Gamma$-equivariant $K$-theory as defined for example in~\cite{CG97}.

\begin{definition}[\cite{BFN18,VV10}]
\label{def:KtheoryR}
The $(G\times\mathbb{C}^*)_{\mathcal{O}}\rtimes\mathbb{C}^*$-equivariant $K$-theory of $\mathcal{R}_{G,N}$ is the limit 
\[
K^{(G\times\mathbb{C}^*)_{\mathcal{O}}\rtimes\mathbb{C}^*}\left(\mathcal{R}_{G,N}\right)\coloneqq\lim_{\lambda}K^{(G\times\mathbb{C}^*)_{\mathcal{O}}\rtimes\mathbb{C}^*}\left(\mathcal{R}_{\leq\lambda}\right)
\]
where $K^{(G\times\mathbb{C}^*)_{\mathcal{O}}\rtimes\mathbb{C}^*}\left(\mathcal{R}_{\leq\lambda}\right)\coloneqq K^{(G_i\times\mathbb{C}^*)\rtimes\mathbb{C}^*}(\mathcal{R}_{\leq\lambda}^d)$ for some $d$,~$i\gg0$ and the limit diagram consists of all pushforward maps $K^{(G\times\mathbb{C}^*)_{\mathcal{O}}\rtimes\mathbb{C}^*}\left(\mathcal{R}_{\leq\lambda}\right)\rightarrow K^{(G\times\mathbb{C}^*)_{\mathcal{O}}\rtimes\mathbb{C}^*}\left(\mathcal{R}_{\leq\mu}\right)$ induced by embeddings $\mathcal{R}_{\leq\lambda}\hookrightarrow\mathcal{R}_{\leq\mu}$ for $\lambda\leq\mu$.
\end{definition}

Following the approach of Remark 3.9(3) in~\cite{BFN18}, we can define a convolution product~$*$ on the vector space $K^{(G\times\mathbb{C}^*)_{\mathcal{O}}\rtimes\mathbb{C}^*}\left(\mathcal{R}_{G,N}\right)$ to get a noncommutative algebra over $K^{\mathbb{C}^*}(\mathrm{pt})\cong\mathbb{C}[q^{\pm1}]$. In the following, we will actually replace the second $\mathbb{C}^*$-factor by its standard double cover and thereby extend scalars to $\mathbb{C}[q^{\pm\frac{1}{2}}]$.

\begin{definition}[\cite{BFN18}]
The algebra with underlying vector space $K^{(G\times\mathbb{C}^*)_{\mathcal{O}}\rtimes\mathbb{C}^*}\left(\mathcal{R}_{G,N}\right)$ and multiplication~$*$ is the \emph{quantized $K$-theoretic Coulomb branch} with $\mathbb{C}^*$ flavor symmetry associated to the gauge group~$G$ and representation~$N$.
\end{definition}

\subsection{The abelian case}
\label{sec:TheAbelianCase}

Let us consider the group $\mathrm{GL}_2$ and its diagonal subgroup $T\subset\mathrm{GL}_2$. There are bijections $\mathcal{R}_{T,0}\cong\Gr_T\cong T_{\mathcal{K}}/T_{\mathcal{O}}$. Note that $\mathcal{K}^*=\mathcal{K}\setminus\{0\}$ while $\mathcal{O}^*$ is the  set of formal power series with nonzero constant term. If~$f=\sum_{i\geq m}a_iz^i$ is a formal Laurent series in $\mathcal{K}^*$ with $a_m\neq0$, then we have $f=z^mg$ where $g=\sum_{i\geq0}a_{m+i}z^i$ is an element of~$\mathcal{O}^*$. It follows that any element of~$\mathcal{R}_{T,0}$ can be represented by a matrix of the form 
\begin{equation}
\label{eqn:matrix}
\begin{pmatrix}
z^{m_1} & 0 \\
0 & z^{m_2}
\end{pmatrix}
\in T_{\mathcal{K}}
\end{equation}
for some $m_1$,~$m_2\in\mathbb{Z}$. Thus the space $\mathcal{R}_{T,0}$ is naturally identified with the lattice $X_*(T)$ of cocharacters of~$T$ with the discrete topology.

We would like to compute the $(T\times\mathbb{C}^*)_{\mathcal{O}}\rtimes\mathbb{C}^*$-equivariant $K$-theory of this space. Since $T_{\mathcal{O}}$ acts on $\mathcal{R}_{T,0}=T_{\mathcal{K}}/T_{\mathcal{O}}$ by left multiplication and $T_{\mathcal{K}}$ is abelian, we know that $(T\times\mathbb{C}^*)_{\mathcal{O}}$ acts trivially on~$\mathcal{R}_{T,0}$. The action of~$\mathbb{C}^*$ on~$\mathcal{R}_{T,0}$ by loop rotation is also trivial. Therefore $(T\times\mathbb{C}^*)_i\rtimes\mathbb{C}^*$ acts trivially on $\mathcal{R}_{\leq\lambda}^d$ for any $i\geq1$, and by definition we have $K^{(T\times\mathbb{C}^*)_{\mathcal{O}}\times\mathbb{C}^*}(\mathcal{R}_{\leq\lambda})=K^{T\times\mathbb{C}^*\rtimes\mathbb{C}^*}(\mathcal{R}_{\leq\lambda}^d)$. Then we have 
\[
K^{(T\times\mathbb{C}^*)_{\mathcal{O}}\rtimes\mathbb{C}^*}(\mathcal{R}_{T,0})\cong\bigoplus_{\mu\in X_*(T)}K^{T\times\mathbb{C}^*\times\mathbb{C}^*}(\mathrm{pt})
\]
as vector spaces where $\mathrm{pt}$ is the variety consisting of a single point. Note that because the actions of $T\times\mathbb{C}^*$ and the second~$\mathbb{C}^*$ commute, the semidirect product is actually direct.

Now for any linear algebraic group~$\Gamma$, the $\Gamma$-equivariant $K$-theory of a point is the group algebra $\mathbb{C}[X^*(\Gamma)]$ of the character lattice $X^*(\Gamma)$ (see~\cite{CG97}, Section~5.2.1). We will write 
\[
\mathbb{C}[X^*(T\times\mathbb{C}^*\times\mathbb{C}^*)]\cong\mathbb{C}[q^{\pm1},z^{\pm1},w_1^{\pm1},w_2^{\pm1}].
\]
Here $w_1$ and~$w_2$ correspond to the characters of~$T$ given by $\diag(t_1,t_2)\mapsto t_1$ and $\diag(t_1,t_2)\mapsto t_2$, respectively, and $z$ and~$q$ correspond to the characters projecting onto the first and second $\mathbb{C}^*$-factors, respectively. We will also write $\mathbb{C}[X_*(T)]\cong\mathbb{C}[D_1^{\pm1},D_2^{\pm1}]$ where $D_1$ and~$D_2$ correspond to the cocharacters $t\mapsto\diag(t,1)$ and $t\mapsto\diag(1,t)$, respectively. In this way, we see that $K^{(T\times\mathbb{C}^*)_{\mathcal{O}}\rtimes\mathbb{C}^*}(\mathcal{R}_{T,0})$ is generated as a $\mathbb{C}$-algebra by variables $q^{\pm1}$, $z^{\pm1}$, $w_1^{\pm1}$, $w_2^{\pm1}$, $D_1^{\pm1}$, and~$D_2^{\pm1}$. In~fact, if we write $\mathcal{D}_{q,z}^0$ for the $\mathbb{C}[q^{\pm1},z^{\pm1}]$-algebra generated by variables $w_1^{\pm1}$, $w_2^{\pm1}$, $D_1^{\pm1}$,~$D_2^{\pm1}$, subject to the commutation relations 
\[
[D_1,D_2]=[w_1,w_2]=0, \quad D_rw_s=q^{2\delta_{rs}}w_sD_r,
\]
then there is a $\mathbb{C}$-algebra isomorphism $K^{(T\times\mathbb{C}^*)_{\mathcal{O}}\rtimes\mathbb{C}^*}(\mathcal{R}_{T,0})\cong\mathcal{D}_{q,z}^0$.

Suppose now that $N$ is a representation of~$\mathrm{GL}_2$, and let us use the same symbol for the restriction of this representation to the subgroup~$T$. Then by restricting the embedding $\mathcal{T}_{T,N}\cong T_{\mathcal{K}}\times_{T_{\mathcal{O}}}N_{\mathcal{O}}\hookrightarrow\mathrm{GL}_2(\mathcal{K})\times_{\mathrm{GL}_2(\mathcal{O})}N_{\mathcal{O}}\cong\mathcal{T}_{\mathrm{GL}_2,N}$, we obtain a map 
\[
\iota:\mathcal{R}_{T,N}\rightarrow\mathcal{R}_{\mathrm{GL}_2,N}.
\]
On the other hand, the zero section of the projection $\pi:\mathcal{T}_{T,N}\rightarrow\Gr_T$ provides a map 
\[
\mathbf{z}:\Gr_T\rightarrow\mathcal{R}_{T,N}.
\]
Since we have $\mathcal{R}_{T,0}\cong\Gr_T$, we can think of $\mathbf{z}$ as a map between varieties of triples. Let us write $\mathcal{D}_{q,z}$ for the localization of~$\mathcal{D}_{q,z}^0$ at the multiplicative subset generated by the elements~$w_1-q^kw_2$ for~$k\in\mathbb{Z}$ and $1-q^k$ for~$k\in\mathbb{Z}\setminus\{0\}$. As explained in~\cite{AS24}, Section~3.3, these maps~$\iota$ and~$\mathbf{z}$ provide a $\mathbb{C}$-algebra embedding 
\begin{equation}
\label{eqn:embedding}
\mathbf{z}^*(\iota_*)^{-1}:K^{(\mathrm{GL}_2\times\mathbb{C}^*)_{\mathcal{O}}\rtimes\mathbb{C}^*}(\mathcal{R}_{\mathrm{GL}_2,N})\hookrightarrow\mathcal{D}_{q,z}.
\end{equation}
In the following, this embedding will be our main tool for studying the quantized Coulomb branch $K^{(\mathrm{GL}_2\times\mathbb{C}^*)_{\mathcal{O}}\rtimes\mathbb{C}^*}(\mathcal{R}_{\mathrm{GL}_2,N})$.

\section{Calculation of Coulomb branches}

In this section, we study Coulomb branches associated to the group $\mathrm{PGL}_2$ and explicitly describe the Coulomb branch appearing in our main result.

\subsection{The variety of triples for $\mathrm{PGL}_2$}

We now assume that $N$ is a representation of the group~$\mathrm{PGL}_2$. We use the same symbol to denote the representation of~$\mathrm{GL}_2$ obtained by composing with the projection $p:\mathrm{GL}_2\rightarrow\mathrm{PGL}_2$. Below we wish to relate the varieties $\mathcal{R}_{\mathrm{GL}_2,N}$ and~$\mathcal{R}_{\mathrm{PGL}_2,N}$. We first note that there is a $\mathbb{Z}$-action on $\mathcal{T}_{\mathrm{GL}_2,N}$ where an integer $m\in\mathbb{Z}$ acts by 
\[
m\cdot[g,n]=[z^mg,z^{-m}n]
\]
for $g\in\mathrm{GL}_2(\mathcal{K})$ and $n\in N_{\mathcal{O}}$. This action clearly preserves the subspace $\mathcal{R}_{\mathrm{GL}_2,N}$. We also have a $\mathbb{Z}$-action on $\Gr_{\mathrm{GL}_2}=\mathrm{GL}_2(\mathcal{K})/\mathrm{GL}_2(\mathcal{O})$ where an integer $m\in\mathbb{Z}$ acts by $m\cdot\left(g\,\mathrm{GL}_2(\mathcal{O})\right)=z^mg\,\mathrm{GL}_2(\mathcal{O})$ for $g\in\mathrm{GL}_2(\mathcal{K})$.

\begin{lemma}
\label{lem:Zaction}
The projection $\pi:\mathcal{R}_{\mathrm{GL}_2,N}\rightarrow\Gr_{\mathrm{GL}_2}$ intertwines these $\mathbb{Z}$-actions. The induced map on quotients is the projection $\pi:\mathcal{R}_{\mathrm{PGL}_2,N}\rightarrow\Gr_{\mathrm{PGL}_2}$.
\end{lemma}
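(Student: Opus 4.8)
The plan is to verify both assertions by working directly with the quotient descriptions of the two affine Grassmannians and the two varieties of triples, using the explicit $\mathbb{Z}$-actions given above. The starting observation is that $\mathrm{PGL}_2 = \mathrm{GL}_2/\mathbb{G}_m$, where $\mathbb{G}_m$ sits inside $\mathrm{GL}_2$ as the scalar matrices, and at the level of loop groups this induces $\mathrm{PGL}_2(\mathcal{K}) = \mathrm{GL}_2(\mathcal{K})/\mathcal{K}^*$ and $\mathrm{PGL}_2(\mathcal{O}) = \mathrm{GL}_2(\mathcal{O})/\mathcal{O}^*$ (here one uses $H^1(\mathrm{Spec}\,\mathcal{K},\mathbb{G}_m) = 0$ and $H^1(\mathrm{Spec}\,\mathcal{O},\mathbb{G}_m)=0$, so the sequences of $\mathcal{K}$- and $\mathcal{O}$-points are exact). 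Consequently $\Gr_{\mathrm{PGL}_2} = \mathrm{PGL}_2(\mathcal{K})/\mathrm{PGL}_2(\mathcal{O}) = \mathrm{GL}_2(\mathcal{K})/(\mathrm{GL}_2(\mathcal{O})\cdot \mathcal{K}^*)$. Since $\mathcal{K}^* = \mathcal{O}^* \cdot z^{\mathbb{Z}}$ and the $\mathcal{O}^*$-part is already absorbed into $\mathrm{GL}_2(\mathcal{O})$, the extra quotient by $\mathcal{K}^*$ on the left is exactly quotienting by the subgroup generated by the scalar matrix $z\cdot\mathrm{Id}$, i.e.\ by the $\mathbb{Z}$-action $m\cdot(g\,\mathrm{GL}_2(\mathcal{O})) = z^m g\,\mathrm{GL}_2(\mathcal{O})$ described above. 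This identifies $\Gr_{\mathrm{PGL}_2} = \Gr_{\mathrm{GL}_2}/\mathbb{Z}$.

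Next I would carry out the analogous bookkeeping for the variety of triples. The space $\mathcal{T}_{\mathrm{PGL}_2,N} = \mathrm{PGL}_2(\mathcal{K})\times_{\mathrm{PGL}_2(\mathcal{O})} N_\mathcal{O}$ can be rewritten, via the identifications of the previous paragraph, as the quotient of $\mathrm{GL}_2(\mathcal{K})\times N_\mathcal{O}$ by the enlarged equivalence relation generated by $(g,n)\sim(gb^{-1},bn)$ for $b\in\mathrm{GL}_2(\mathcal{O})$ together with $(g,n)\sim(gc^{-1},cn)$ for $c\in\mathcal{K}^*$; but since $N$ is pulled back from $\mathrm{PGL}_2$, the scalar $c=z^m\cdot\mathrm{Id}$ acts on $N$ trivially, so the second relation becomes $(g,n)\sim(z^{-m}g, n)$. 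Writing $z^{-m}g$ and compensating inside the $\mathrm{GL}_2(\mathcal{O})$-relation is not possible (scalars $z^m$ are not in $\mathrm{GL}_2(\mathcal{O})$ unless $m=0$), so the residual relation is precisely $[g,n]\sim[z^m g,z^{-m}n]$ after the harmless change of variable $n\mapsto z^{-m}n$ inside the trivially-acting scalar — which is exactly the displayed $\mathbb{Z}$-action $m\cdot[g,n]=[z^m g,z^{-m}n]$. (The substitution is legitimate because $z^m\cdot\mathrm{Id}$ acts as the identity on $N$, so $z^{-m}n \in N_\mathcal{O}$ represents the same element after the scalar is discarded; it is worth spelling this small step out carefully.) Therefore $\mathcal{T}_{\mathrm{PGL}_2,N} = \mathcal{T}_{\mathrm{GL}_2,N}/\mathbb{Z}$. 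The condition $gn\in N_\mathcal{O}$ cutting out $\mathcal{R}$ from $\mathcal{T}$ is manifestly invariant under $[g,n]\mapsto[z^m g,z^{-m}n]$ (it reads $z^m g z^{-m} n = gn$), so $\mathcal{R}_{\mathrm{PGL}_2,N} = \mathcal{R}_{\mathrm{GL}_2,N}/\mathbb{Z}$ as well.

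With both quotient identifications in hand, the two claims of the lemma follow formally: the projection $\pi:\mathcal{R}_{\mathrm{GL}_2,N}\to\Gr_{\mathrm{GL}_2}$, $[g,n]\mapsto g\,\mathrm{GL}_2(\mathcal{O})$, visibly sends $[z^m g,z^{-m}n]$ to $z^m g\,\mathrm{GL}_2(\mathcal{O})$, so it intertwines the two $\mathbb{Z}$-actions; passing to quotients gives a well-defined map $\mathcal{R}_{\mathrm{GL}_2,N}/\mathbb{Z}\to\Gr_{\mathrm{GL}_2}/\mathbb{Z}$, which under the identifications above is nothing but $\pi:\mathcal{R}_{\mathrm{PGL}_2,N}\to\Gr_{\mathrm{PGL}_2}$ — one checks this is compatible with the functor-of-points / quotient constructions, so that the induced map really is the structural projection and not merely abstractly isomorphic to it. I expect the main obstacle to be the bookkeeping in the second paragraph: making fully precise, at the level of the ind-scheme (reduced) structures rather than just the sets of $\mathbb{C}$-points, that quotienting by the $\mathbb{Z}$-action on $\mathcal{T}_{\mathrm{GL}_2,N}$ recovers $\mathcal{T}_{\mathrm{PGL}_2,N}$, since $\mathbb{Z}$ acts by permuting the connected components indexed by the cocharacter lattice and one must track how $N_\mathcal{O}$ (a $\mathrm{GL}_2$-representation pulled back from $\mathrm{PGL}_2$) behaves under the scalar twist. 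The subtlety is entirely the role of the central $\mathbb{G}_m$ acting trivially on $N$; once that is handled the rest is routine diagram-chasing.
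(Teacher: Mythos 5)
Your proposal is correct and follows essentially the same route as the paper: both arguments identify $\Gr_{\mathrm{PGL}_2}\cong\Gr_{\mathrm{GL}_2}/\mathbb{Z}$ and $\mathcal{T}_{\mathrm{PGL}_2,N}\cong\mathcal{T}_{\mathrm{GL}_2,N}/\mathbb{Z}$ by lifting elements of $\mathrm{PGL}_2(\mathcal{K})$ and $\mathrm{PGL}_2(\mathcal{O})$ to $\mathrm{GL}_2$, writing the ambiguity $c\in\mathcal{K}^*$ as $z^m c'$ with $c'\in\mathcal{O}^*$, and using that the central scalars act trivially on $N$, before restricting to the locus $gn\in N_{\mathcal{O}}$. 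Your explicit appeal to Hilbert~90 for the surjectivity of $\mathrm{GL}_2(\mathcal{K})\to\mathrm{PGL}_2(\mathcal{K})$ and your remark about tracking the reduced ind-scheme structure make explicit two points the paper leaves implicit, but the substance of the argument is the same.
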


\begin{proof}
The first statement is clear from the definition of the $\mathbb{Z}$-actions. To prove the second statement, we consider the map $\mathcal{T}_{\mathrm{GL}_2,N}\rightarrow\mathcal{T}_{\mathrm{PGL}_2,N}$ given by $[g,n]\mapsto[p(g),n]$. A simple calculation shows that it is well defined. Abusing notation, we will denote it also by~$p$. We then observe that if $p\left([g_1,n_1]\right)=p\left([g_2,n_2]\right)$ then there exists $b\in\mathrm{PGL}_2(\mathcal{O})$ such that $(p(g_1),n_1)=(p(g_2)b^{-1},bn_2)$. Thus there exists $b'\in\mathrm{GL}_2(\mathcal{O})$ and $c\in\mathcal{K}^*$ such that $(g_1,n_1)=(cg_2b'^{-1},c^{-1}b'n_2)$. We can write $c=z^mc'$ for some $c'\in\mathcal{O}^*$ and some integer~$m$. Absorbing $c'$ into~$b'$, we may assume without loss of generality that $(g_1,n_1)=(z^mg_2b'^{-1},z^{-m}b'n_2)$. This means that $[g_1,n_1]$ and~$[g_2,n_2]$ are related by the $\mathbb{Z}$-action, and hence the map $p:\mathcal{T}_{\mathrm{GL}_2,N}\rightarrow\mathcal{T}_{\mathrm{PGL}_2,N}$ descends to an isomorphism 
\[
\mathcal{T}_{\mathrm{GL}_2,N}/\mathbb{Z}\cong\mathcal{T}_{\mathrm{PGL}_2,N}.
\]
One can likewise define a map $\Gr_{\mathrm{GL}_2}\rightarrow\Gr_{\mathrm{PGL}_2}$ given by $g\,\mathrm{GL}_2(\mathcal{O})\mapsto p(g)\,\mathrm{PGL}_2(\mathcal{O})$, and a similar argument shows that it descends to an isomorphism $\Gr_{\mathrm{GL}_2}/\mathbb{Z}\cong\Gr_{\mathrm{PGL}_2}$. Finally, from the previous isomorphism, we obtain 
\begin{align*}
\mathcal{R}_{\mathrm{PGL}_2,N} &= \{[\bar{g},n]\in\mathcal{T}_{\mathrm{PGL}_2,N}:\bar{g}n\in N_{\mathcal{O}}\} \\
&\cong \{[g,n]\in\mathcal{T}_{\mathrm{GL}_2,N}: gn\in N_{\mathcal{O}}\}/\mathbb{Z} \\
&=\mathcal{R}_{\mathrm{GL}_2,N}/\mathbb{Z}.
\end{align*}
This completes the proof.
\end{proof}

It is well known that the space $\Gr_{\mathrm{GL}_2}$ has connected components parametrized by~$\mathbb{Z}$ and that the generator $1\in\mathbb{Z}$ acts by sending the $k$th component isomorphically to the $(k+2)$nd component. Let us write $\mathcal{R}_k\subset\mathcal{R}_{\mathrm{GL}_2,N}$ for the preimage of the $k$th component of $\Gr_{\mathrm{GL}_2}$ under the map $\pi:\mathcal{R}_{\mathrm{GL}_2,N}\rightarrow\Gr_{\mathrm{GL}_2}$. Then we have the decompositions 
\[
\mathcal{R}_{\mathrm{GL}_2,N}=\bigsqcup_{k\in\mathbb{Z}}\mathcal{R}_k, \qquad K^{(\mathrm{GL}_2\times\mathbb{C}^*)_{\mathcal{O}}\rtimes\mathbb{C}^*}(\mathcal{R}_{\mathrm{GL}_2,N})\cong\bigoplus_{k\in\mathbb{Z}}K^{(\mathrm{GL}_2\times\mathbb{C}^*)_{\mathcal{O}}\rtimes\mathbb{C}^*}(\mathcal{R}_k).
\]
By the equivariance in Lemma~\ref{lem:Zaction}, the generator $1\in\mathbb{Z}$ gives an isomorphism $\mathcal{R}_k\cong\mathcal{R}_{k+2}$. This induces an isomorphism $K^{(\mathrm{GL}_2\times\mathbb{C}^*)_{\mathcal{O}}\rtimes\mathbb{C}^*}(\mathcal{R}_k)\cong K^{(\mathrm{GL}_2\times\mathbb{C}^*)_{\mathcal{O}}\rtimes\mathbb{C}^*}(\mathcal{R}_{k+2})$, and so we have a $\mathbb{Z}$-action on $K^{(\mathrm{GL}_2\times\mathbb{C}^*)_{\mathcal{O}}\rtimes\mathbb{C}^*}(\mathcal{R}_{\mathrm{GL}_2,N})$. 

\begin{lemma}
\label{lem:PGL2spaceoftriples}
We have $K^{(\mathrm{GL}_2\times\mathbb{C}^*)_{\mathcal{O}}\rtimes\mathbb{C}^*}(\mathcal{R}_{\mathrm{PGL}_2,N})\cong K^{(\mathrm{GL}_2\times\mathbb{C}^*)_{\mathcal{O}}\rtimes\mathbb{C}^*}(\mathcal{R}_{\mathrm{GL}_2,N})/\mathbb{Z}$, and the embedding~\eqref{eqn:embedding} induces an embedding 
\[
K^{(\mathrm{GL}_2\times\mathbb{C}^*)_{\mathcal{O}}\rtimes\mathbb{C}^*}(\mathcal{R}_{\mathrm{PGL}_2,N})\hookrightarrow\mathcal{D}_{q,z}/(D_1D_2-1)
\]
with image $K^{(\mathrm{GL}_2\times\mathbb{C}^*)_{\mathcal{O}}\rtimes\mathbb{C}^*}(\mathcal{R}_{\mathrm{GL}_2,N})/(D_1D_2-1)$.
\end{lemma}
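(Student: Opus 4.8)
The plan is to deduce both claims from Lemma~\ref{lem:Zaction} together with the abelianized description of the $\mathrm{GL}_2$-Coulomb branch recalled in Section~\ref{sec:TheAbelianCase}. First I would analyze the $\mathbb{Z}$-action $m\cdot[g,n]=[z^mg,z^{-m}n]$ on $\mathcal{R}_{\mathrm{GL}_2,N}$ at the level of $K$-theory. Since $z^mI$ is central in $\mathrm{GL}_2(\mathcal{K})$, this action commutes with the $(\mathrm{GL}_2\times\mathbb{C}^*)_{\mathcal{O}}\rtimes\mathbb{C}^*$-action and hence induces a $\mathbb{Z}$-action on $K^{(\mathrm{GL}_2\times\mathbb{C}^*)_{\mathcal{O}}\rtimes\mathbb{C}^*}(\mathcal{R}_{\mathrm{GL}_2,N})$; as recalled before the statement, the generator $\theta$ sends $\mathcal{R}_k$ isomorphically to $\mathcal{R}_{k+2}$, so $\theta$ has degree $2$ for the grading by connected component. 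The maps $\iota$ and $\mathbf{z}$ of Section~\ref{sec:TheAbelianCase} are evidently $\mathbb{Z}$-equivariant, so the embedding~\eqref{eqn:embedding} is $\mathbb{Z}$-equivariant for the induced action on $\mathcal{D}_{q,z}$, under which $\theta$ sends the class of a cocharacter $(m_1,m_2)$ to that of $(m_1+1,m_2+1)$, i.e.\ acts by multiplication by $D_1D_2$. Therefore $\theta$ acts on $K^{(\mathrm{GL}_2\times\mathbb{C}^*)_{\mathcal{O}}\rtimes\mathbb{C}^*}(\mathcal{R}_{\mathrm{GL}_2,N})$ by multiplication by the monopole class $D_1D_2$ attached to the central cocharacter $t\mapsto\diag(t,t)$ of $\mathrm{GL}_2$, and $/\mathbb{Z}$ below will denote the associated quotient by $(D_1D_2-1)$.

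Second, by Lemma~\ref{lem:Zaction} we have $\mathcal{R}_{\mathrm{PGL}_2,N}\cong\mathcal{R}_{\mathrm{GL}_2,N}/\mathbb{Z}$ for a $\mathbb{Z}$-action that is free on the set of connected components; hence $\mathcal{R}_0\sqcup\mathcal{R}_1$ is a fundamental domain, and the quotient map restricts to a $(\mathrm{GL}_2\times\mathbb{C}^*)_{\mathcal{O}}\rtimes\mathbb{C}^*$-equivariant isomorphism $\mathcal{R}_0\sqcup\mathcal{R}_1\xrightarrow{\sim}\mathcal{R}_{\mathrm{PGL}_2,N}$. I would conclude from this that the pushforward $p_*$ along the quotient map $p\colon\mathcal{R}_{\mathrm{GL}_2,N}\to\mathcal{R}_{\mathrm{PGL}_2,N}$ is surjective with kernel spanned by the elements $x-\theta(x)$, so that it identifies $K^{(\mathrm{GL}_2\times\mathbb{C}^*)_{\mathcal{O}}\rtimes\mathbb{C}^*}(\mathcal{R}_{\mathrm{PGL}_2,N})$ with the $\mathbb{Z}$-coinvariants $K^{(\mathrm{GL}_2\times\mathbb{C}^*)_{\mathcal{O}}\rtimes\mathbb{C}^*}(\mathcal{R}_{\mathrm{GL}_2,N})/(D_1D_2-1)$; and since the convolution diagram for $\mathcal{R}_{\mathrm{PGL}_2,N}$ is obtained from that for $\mathcal{R}_{\mathrm{GL}_2,N}$ by passing to $\mathbb{Z}$-quotients, $p_*$ respects the convolution products, so this is an isomorphism of algebras. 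In particular $(D_1D_2-1)$ generates a two-sided ideal, i.e.\ $D_1D_2$ is central in the $\mathrm{GL}_2$-Coulomb branch -- as it should be, being the monopole operator of a central cocharacter for $N=\End_{\mathbb{C}}(\mathbb{C}^2)$, a representation with trivial central character. This gives the first isomorphism.

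Finally, transporting the $\mathbb{Z}$-equivariance of~\eqref{eqn:embedding} to the quotients, the embedding descends to a map $K^{(\mathrm{GL}_2\times\mathbb{C}^*)_{\mathcal{O}}\rtimes\mathbb{C}^*}(\mathcal{R}_{\mathrm{GL}_2,N})/(D_1D_2-1)\to\mathcal{D}_{q,z}/(D_1D_2-1)$ whose image is, by construction, $K^{(\mathrm{GL}_2\times\mathbb{C}^*)_{\mathcal{O}}\rtimes\mathbb{C}^*}(\mathcal{R}_{\mathrm{GL}_2,N})/(D_1D_2-1)$, and the only real point is to check injectivity, equivalently the equality $(D_1D_2-1)\mathcal{D}_{q,z}\cap K^{(\mathrm{GL}_2\times\mathbb{C}^*)_{\mathcal{O}}\rtimes\mathbb{C}^*}(\mathcal{R}_{\mathrm{GL}_2,N})=(D_1D_2-1)K^{(\mathrm{GL}_2\times\mathbb{C}^*)_{\mathcal{O}}\rtimes\mathbb{C}^*}(\mathcal{R}_{\mathrm{GL}_2,N})$. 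This is where I expect the main obstacle to lie, and I would handle it with the grading by $\det$-degree (equivalently, by connected component): the embedding~\eqref{eqn:embedding} is graded, $D_1D_2$ is invertible in $\mathcal{D}_{q,z}$ and homogeneous of degree $2$, so given $\omega=(D_1D_2-1)\eta$ lying in the Coulomb branch one recovers $\eta$ from $\omega$ one graded piece at a time, beginning from the bottom degree (where $\omega$ and $-\eta$ agree), and finds inductively that $\eta$ itself lies in the Coulomb branch. After identifying each quotient with the sum of its degree-$0$ and degree-$1$ parts, the induced map becomes simply the pair of restrictions of~\eqref{eqn:embedding} to $K^{(\mathrm{GL}_2\times\mathbb{C}^*)_{\mathcal{O}}\rtimes\mathbb{C}^*}(\mathcal{R}_0)$ and $K^{(\mathrm{GL}_2\times\mathbb{C}^*)_{\mathcal{O}}\rtimes\mathbb{C}^*}(\mathcal{R}_1)$, which are injective because~\eqref{eqn:embedding} is.
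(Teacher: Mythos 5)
Your proof is correct and follows essentially the same route as the paper's: identify $\mathcal{R}_{\mathrm{PGL}_2,N}$ with $\mathcal{R}_0\sqcup\mathcal{R}_1$ using Lemma~\ref{lem:Zaction}, and use the $\mathbb{Z}$-equivariance of $\iota$ and $\mathbf{z}$ to see that the generator of $\mathbb{Z}$ acts on $\mathcal{D}_{q,z}$ by multiplication by $D_1D_2$, so that passing to quotients yields the embedding into $\mathcal{D}_{q,z}/(D_1D_2-1)$. The one place you go beyond the paper is in explicitly verifying injectivity of the descended map via the grading by connected component (recovering $\eta$ from $(D_1D_2-1)\eta$ degree by degree), a point the paper's proof asserts without detail; this is a legitimate and worthwhile addition.
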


\begin{proof}
Since the $\mathbb{Z}$-action identifies $\mathcal{R}_k\cong\mathcal{R}_{k+2}$, Lemma~\ref{lem:Zaction} implies that $\mathcal{R}_{\mathrm{PGL}_2,N}\cong\mathcal{R}_0\sqcup\mathcal{R}_1$. Therefore 
\begin{align*}
K^{(\mathrm{GL}_2\times\mathbb{C}^*)_{\mathcal{O}}\rtimes\mathbb{C}^*}(\mathcal{R}_{\mathrm{PGL}_2,N}) &\cong K^{(\mathrm{GL}_2\times\mathbb{C}^*)_{\mathcal{O}}\rtimes\mathbb{C}^*}(\mathcal{R}_0\sqcup\mathcal{R}_1) \\
&\cong K^{(\mathrm{GL}_2\times\mathbb{C}^*)_{\mathcal{O}}\rtimes\mathbb{C}^*}(\mathcal{R}_0)\oplus K^{(\mathrm{GL}_2\times\mathbb{C}^*)_{\mathcal{O}}\rtimes\mathbb{C}^*}(\mathcal{R}_1) \\
&\cong K^{(\mathrm{GL}_2\times\mathbb{C}^*)_{\mathcal{O}}\rtimes\mathbb{C}^*}(\mathcal{R}_{\mathrm{GL}_2,N})/\mathbb{Z}.
\end{align*}
In the same way that we defined the $\mathbb{Z}$-action on $\mathcal{R}_{\mathrm{GL}_2,N}$, we can define $\mathbb{Z}$-actions on the spaces $\mathcal{R}_{T,N}$ and $\mathcal{R}_{T,0}$. The generator $1\in\mathbb{Z}$ acts on the latter space by rescaling the matrix~\eqref{eqn:matrix} by~$z$. The induced map on $K^{(T\times\mathbb{C}^*)_{\mathcal{O}}\rtimes\mathbb{C}^*}(\mathcal{R}_{T,0})\cong\mathcal{D}_{q,z}^0$ is given by multiplication by $D_1D_2$. The maps $\iota$ and~$\mathbf{z}$ are $\mathbb{Z}$-equivariant, so 
\begin{align*}
K^{(\mathrm{GL}_2\times\mathbb{C}^*)_{\mathcal{O}}\rtimes\mathbb{C}^*}(\mathcal{R}_{\mathrm{PGL}_2,N}) &\cong K^{(\mathrm{GL}_2\times\mathbb{C}^*)_{\mathcal{O}}\rtimes\mathbb{C}^*}(\mathcal{R}_{\mathrm{GL}_2,N})/\mathbb{Z} \\
&\hookrightarrow\mathcal{D}_{q,z}/(D_1D_2-1),
\end{align*}
and the image of this embedding is $K^{(\mathrm{GL}_2\times\mathbb{C}^*)_{\mathcal{O}}\rtimes\mathbb{C}^*}(\mathcal{R}_{\mathrm{GL}_2,N})/(D_1D_2-1)$.
\end{proof}

\subsection{Calculation of equivariant $K$-theory}

We now want to compute the $(\mathrm{PGL}_2\times\mathbb{C}^*)_{\mathcal{O}}\rtimes\mathbb{C}^*$-equivariant $K$-theory of~$\mathcal{R}_{\mathrm{PGL}_2,N}$. We begin with some general considerations. Let $X$ be a variety and $G$ an algebraic group acting on~$X$. Recall that a $G$-equivariant sheaf on~$X$ is defined to be a sheaf~$\mathcal{F}$ of $\mathcal{O}_X$-modules on~$X$ together with an isomorphism 
\[
I:a^*\mathcal{F}\stackrel{\sim}{\longrightarrow}p^*\mathcal{F}
\]
of sheaves on~$G\times X$ where $a:G\times X\rightarrow X$ is the group action and $p:G\times X\rightarrow X$ is the projection. This map~$I$ is required to satisfy a cocycle condition (see~\cite{CG97}, Section~5.1). At the level of stalks, $I$~induces an isomorphism $\mathcal{F}_{g\cdot x}\cong(a^*\mathcal{F})_{(g,x)}\stackrel{\sim}{\rightarrow}(p^*\mathcal{F})_{(g,x)}\cong\mathcal{F}_x$ for~$g\in G$ and~$x\in X$.

Now suppose that the $G$-action on~$X$ descends to a $G/H$-action for some subgroup $H\subset G$. Let~$\mathcal{F}$ be a sheaf of $\mathcal{O}_X$-modules with equivariant structure defined by an isomorphism~$I$ as above. Since the group $H$ acts trivially on~$X$, we have $\mathcal{F}_{h\cdot x}=\mathcal{F}_x$ for any~$h\in H$ and~$x\in X$. Then $(\mathcal{F},I)$ descends to a $G/H$-equivariant sheaf on~$X$ if and only if the isomorphism $\mathcal{F}_x\stackrel{\sim}{\rightarrow}\mathcal{F}_x$ induced by~$I$ on stalks at $(h,x)$ is the identity map for any~$h\in H$ and~$x\in X$. Recalling that $K^G(X)$ is the Grothendieck group of the category of $G$-equivariant coherent sheaves on~$X$, we see that $K^{G/H}(X)$ is naturally identified with a subspace of~$K^G(X)$.

\begin{lemma}
\label{lem:GL2PGL2}
$K^{(\mathrm{PGL}_2\times\mathbb{C}^*)_{\mathcal{O}}\rtimes\mathbb{C}^*}(\mathcal{R}_{\mathrm{PGL}_2,N})$ is identified with a subspace of $K^{(\mathrm{GL}_2\times\mathbb{C}^*)_{\mathcal{O}}\rtimes\mathbb{C}^*}(\mathcal{R}_{\mathrm{PGL}_2,N})$, and the embedding of Lemma~\ref{lem:PGL2spaceoftriples} restricts to an embedding 
\[
K^{(\mathrm{PGL}_2\times\mathbb{C}^*)_{\mathcal{O}}\rtimes\mathbb{C}^*}(\mathcal{R}_{\mathrm{PGL}_2,N})\hookrightarrow\mathcal{D}_{q,z}^{\mathbb{C}^*}/(D_1D_2-1)
\]
where $\mathbb{C}^*$ acts on~$\mathcal{D}_{q,z}$ by simultaneously rescaling $w_1$ and~$w_2$. Moreover, the image of this embedding is $K^{(\mathrm{GL}_2\times\mathbb{C}^*)_{\mathcal{O}}\rtimes\mathbb{C}^*}(\mathcal{R}_{\mathrm{GL}_2,N})^{\mathbb{C}^*}/(D_1D_2-1)$.
\end{lemma}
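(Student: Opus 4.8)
The plan is to combine Lemma~\ref{lem:PGL2spaceoftriples} with the general remark preceding this lemma about $K^{G/H}$ sitting inside $K^G$, applied to the quotient map $(\mathrm{GL}_2\times\mathbb{C}^*)_{\mathcal{O}}\rtimes\mathbb{C}^*\twoheadrightarrow(\mathrm{PGL}_2\times\mathbb{C}^*)_{\mathcal{O}}\rtimes\mathbb{C}^*$ with kernel $H$ equal to the image of $\mathbb{C}^*_{\mathcal{O}}$, i.e.\ the $\mathcal{O}$-points of the central $\mathbb{C}^*\subset\mathrm{GL}_2$. The point is that this central subgroup acts trivially on $\mathcal{R}_{\mathrm{PGL}_2,N}$ — indeed the $\mathrm{GL}_2$-action on $\mathcal{R}_{\mathrm{PGL}_2,N}$ visibly factors through $\mathrm{PGL}_2$ since $N$ is a $\mathrm{PGL}_2$-representation. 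So the general discussion identifies $K^{(\mathrm{PGL}_2\times\mathbb{C}^*)_{\mathcal{O}}\rtimes\mathbb{C}^*}(\mathcal{R}_{\mathrm{PGL}_2,N})$ with the subspace of $K^{(\mathrm{GL}_2\times\mathbb{C}^*)_{\mathcal{O}}\rtimes\mathbb{C}^*}(\mathcal{R}_{\mathrm{PGL}_2,N})$ consisting of classes whose equivariant structure is trivial along $H$. This gives the first sentence of the statement.

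Next I would transport this invariance condition through the embedding of Lemma~\ref{lem:PGL2spaceoftriples}. That embedding was built from the maps $\iota$ and $\mathbf{z}$ between varieties of triples for the torus $T$ and for $\mathrm{GL}_2$, and from the identification $K^{(T\times\mathbb{C}^*)_{\mathcal{O}}\rtimes\mathbb{C}^*}(\mathcal{R}_{T,0})\cong\mathcal{D}_{q,z}^0$; all of these are equivariant for the residual torus action, and in particular for the central $\mathbb{C}^*_{\mathcal{O}}\subset T_{\mathcal{O}}\subset\mathrm{GL}_2(\mathcal{O})$. On the $K$-theory of a point the (constant loop in the) central $\mathbb{C}^*$ acts through its character lattice; unwinding the identifications in Section~\ref{sec:TheAbelianCase}, the subgroup $H$ scales the coordinates $w_1$ and $w_2$ simultaneously (these being the two characters of $T$), while leaving $q$, $z$, $D_1$, $D_2$ fixed, since $D_1, D_2$ record cocharacters of $T$ and are untouched by conjugation by the center. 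Therefore the $H$-invariant part of $\mathcal{D}_{q,z}$ is exactly $\mathcal{D}_{q,z}^{\mathbb{C}^*}$ with $\mathbb{C}^*$ acting by simultaneous rescaling of $w_1,w_2$, and the condition "$H$ acts trivially" is preserved in both directions under the embedding. Intersecting with the ideal $(D_1D_2-1)$ from Lemma~\ref{lem:PGL2spaceoftriples} is compatible with taking $\mathbb{C}^*$-invariants because $D_1D_2-1$ is itself $\mathbb{C}^*$-invariant, which yields both the target $\mathcal{D}_{q,z}^{\mathbb{C}^*}/(D_1D_2-1)$ and the description of the image as $K^{(\mathrm{GL}_2\times\mathbb{C}^*)_{\mathcal{O}}\rtimes\mathbb{C}^*}(\mathcal{R}_{\mathrm{GL}_2,N})^{\mathbb{C}^*}/(D_1D_2-1)$.

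The main obstacle I anticipate is pinning down \emph{precisely which} $\mathbb{C}^*$-action on $\mathcal{D}_{q,z}$ corresponds to the kernel $H$ — that is, checking that the central $\mathbb{C}^*\subset\mathrm{GL}_2$ really does rescale $w_1$ and $w_2$ together and nothing else, and that its \emph{higher} $\mathcal{O}$-jets act trivially on $\mathcal{R}_{\mathrm{PGL}_2,N}$ (so that only the constant-loop $\mathbb{C}^*$ contributes a nontrivial condition, giving a single copy of $\mathbb{C}^*$ rather than $\mathbb{C}^*_{\mathcal{O}}$). This requires carefully tracking the definitions through the quotient $(G_i\times\mathbb{C}^*)\rtimes\mathbb{C}^*$ used in Definition~\ref{def:KtheoryR} and confirming that the $K$-theory class of a point is insensitive to the $z^i$-truncated directions of the center. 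Once this bookkeeping is done, the rest is a formal application of the "$K^{G/H}\subset K^G$" principle together with exactness of taking invariants under the (linearly reductive) group $\mathbb{C}^*$, so that invariants commute with the quotient by $(D_1D_2-1)$.
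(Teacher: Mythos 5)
Your proposal is correct and follows essentially the same route as the paper: apply the $K^{G/H}\subset K^{G}$ principle to the surjections $(\mathrm{GL}_2\times\mathbb{C}^*)_i\rtimes\mathbb{C}^*\twoheadrightarrow(\mathrm{PGL}_2\times\mathbb{C}^*)_i\rtimes\mathbb{C}^*$ and then detect the descent condition through the toral embedding, where it becomes invariance under simultaneous rescaling of $w_1,w_2$. The bookkeeping worry you flag about higher $\mathcal{O}$-jets of the center is resolved in the paper exactly as you anticipate, by comparing $K^{\bar{T}}(\mathrm{pt})\cong\mathbb{C}[w_1^{\pm1},w_2^{\pm1}]^{\mathbb{C}^*}$ with $K^{T}(\mathrm{pt})$ on each summand of the fixed-point decomposition, so only the character lattice of the reductive quotient contributes.
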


\begin{proof}
There is a natural surjective group homomorphism $p:\mathrm{GL}_2(\mathcal{O})\rightarrow\mathrm{PGL}_2(\mathcal{O})$, and for any integer~$i\geq1$, surjective homomorphisms $\mathrm{GL}_2(\mathcal{O})\rightarrow\mathrm{GL}_2(\mathcal{O}/z^i\mathcal{O})$ and $\mathrm{PGL}_2(\mathcal{O})\rightarrow\mathrm{PGL}_2(\mathcal{O}/z^i\mathcal{O})$. Let us write~$H_i$ and~$\bar{H}_i$, respectively, for the kernels of these last two maps. If $h\in H_i$ then we have $p(h)\in\bar{H}_i$, and hence there is a well defined surjective group homomorphism $\mathrm{GL}_2(\mathcal{O}/z^i\mathcal{O})\cong\mathrm{GL}_2(\mathcal{O})/H_i\rightarrow\mathrm{PGL}_2(\mathcal{O})/\bar{H}_i\cong\mathrm{PGL}_2(\mathcal{O}/z^i\mathcal{O})$. This proves that the group $\mathrm{PGL}_2(\mathcal{O}/z^i\mathcal{O})$ is a quotient of $\mathrm{GL}_2(\mathcal{O}/z^i\mathcal{O})$. It then follows from our general considerations that $K^{(\mathrm{PGL}_2\times{\mathbb{C}^*})_i\rtimes\mathbb{C}^*}(\mathcal{R}_{\leq\lambda}^d)$ is identified with a subspace of $K^{(\mathrm{GL}_2\times{\mathbb{C}^*})_i\rtimes\mathbb{C}^*}(\mathcal{R}_{\leq\lambda}^d)$. Taking a limit, we see that $K^{(\mathrm{PGL}_2\times\mathbb{C}^*)_{\mathcal{O}}\rtimes\mathbb{C}^*}(\mathcal{R}_{\mathrm{PGL}_2,N})$ is identified with a subspace of $K^{(\mathrm{GL}_2\times\mathbb{C}^*)_\mathcal{O}\rtimes\mathbb{C}^*}(\mathcal{R}_{\mathrm{PGL}_2,N})$ as desired.

Now an element of $K^{(\mathrm{GL}_2\times\mathbb{C}^*)_\mathcal{O}\rtimes\mathbb{C}^*}(\mathcal{R}_{\mathrm{PGL}_2,N})$ lies in the subspace $K^{(\mathrm{PGL}_2\times\mathbb{C})_\mathcal{O}\rtimes\mathbb{C}^*}(\mathcal{R}_{\mathrm{PGL}_2,N})$ if and only if its image under the embedding of Lemma~\ref{lem:PGL2spaceoftriples} lies in the quotient of 
\[
\bigoplus_{\mu\in X_*(T)}K^{\bar{T}\times\mathbb{C}^*\times\mathbb{C}^*}(\mathrm{pt})\subset\mathcal{D}_{q,z}
\]
by $(D_1D_2-1)$. Here we write $\bar{T}$ for the diagonal subgroup of $\mathrm{PGL}_2$. Recall that $K^{T}(\mathrm{pt})\cong\mathbb{C}[w_1^{\pm1},w_2^{\pm1}]$ where $w_1$ and $w_2$ correspond, respectively, to the characters $\diag(t_1,t_2)\mapsto t_1$ and $\diag(t_1,t_2)\mapsto t_2$ of~$T$. Similarly, we have $K^{\bar{T}}(\mathrm{pt})\cong\mathbb{C}[X^*(\bar{T})]\cong\mathbb{C}[w_1^{\pm1},w_2^{\pm1}]^{\mathbb{C}^*}$ where $\mathbb{C}^*$ acts by simultaneously rescaling~$w_1$ and~$w_2$. This implies the lemma.
\end{proof}

By Lemma~\ref{lem:GL2PGL2}, we have an isomorphism 
\begin{equation}
\label{eqn:Hamiltonianreduction}
K^{(\mathrm{PGL}_2\times\mathbb{C}^*)_{\mathcal{O}}\rtimes\mathbb{C}^*}(\mathcal{R}_{\mathrm{PGL}_2,N})\cong K^{(\mathrm{GL}_2\times\mathbb{C}^*)_{\mathcal{O}}\rtimes\mathbb{C}^*}(\mathcal{R}_{\mathrm{GL}_2,N})^{\mathbb{C}^*}/(D_1D_2-1)
\end{equation}
where we regard $K^{(\mathrm{GL}_2\times\mathbb{C}^*)_{\mathcal{O}}\rtimes\mathbb{C}^*}(\mathcal{R}_{\mathrm{GL}_2,N})$ as a subalgebra of~$\mathcal{D}_{q,z}$ via the embedding~\eqref{eqn:embedding}. This result can be seen as a dual $K$-theoretic version of Proposition~3.18 of~\cite{BFN18}, which concerns Coulomb~branches defined in terms of equivariant Borel--Moore homology.

\subsection{Monopole operators}
\label{sec:MonopoleOperators}

Finally, let us describe the Coulomb branch appearing in our main result. In the following we will take $N=\Hom_{\mathbb{C}}(\mathbb{C}^2,\mathbb{C}^2)$ to be the space of $2\times2$ matrices with complex coefficients and let $\mathrm{GL}_2$ act on this space by conjugation. We consider the algebra~$\mathcal{D}_{q,z}$ defined above and the embedding~\eqref{eqn:embedding} of the quantized Coulomb branch into~$\mathcal{D}_{q,z}$.

\begin{definition}[\cite{BFN19,FT19}]
\label{def:monopole}
For each integer $n=1,2$ and each symmetric Laurent~polynomial~$f$ in $n$~variables, we define the elements $E_n[f]$,~$F_n[f]\in\mathcal{D}_{q,t}$, called \emph{dressed minuscule monopole operators}, by the formulas 
\[
E_n[f] = \sum_{\substack{I\subset\{1,2\} \\ |I|=n}}f(w_I)\prod_{r\in I,s\not\in I}\frac{1-qzw_rw_s^{-1}}{1-w_sw_r^{-1}}\prod_{r\in I} D_r
\]
and 
\[
F_n[f] = \sum_{\substack{I\subset\{1,2\} \\ |I|=n}}f(q^{-2}w_I)\prod_{r\in I,s\not\in I}\frac{1-qzw_sw_r^{-1}}{1-w_rw_s^{-1}}\prod_{r\in I} D_r^{-1}.
\]
Here we write $f(w_I)$ (respectively, $f(q^{-2}w_I)$) for the elements of $\mathcal{D}_{q,z}$ obtained by substituting $(w_i)_{i\in I}$ (respectively, $(q^{-2}w_i)_{i\in I}$) into the argument of~$f$.
\end{definition}

Let us write $\mathcal{A}_{q,z}$ for the $\mathbb{C}[q^{\pm1},z^{\pm1}]$-subalgebra of~$\mathcal{D}_{q,z}$ generated by all dressed minuscule monopole operators $E_n[f]$ and $F_n[f]$, together with all symmetric Laurent~polynomials in the variables~$w_i$. The reason for introducing this algebra~$\mathcal{A}_{q,z}$ is that it provides a concrete description of a quantized Coulomb branch.

\begin{proposition}[\cite{AS24}, Proposition~3.5]
\label{prop:monopolegenerate}
The embedding~\eqref{eqn:embedding} provides a $\mathbb{C}$-algebra isomorphism 
\[
K^{(\mathrm{GL}_2\times\mathbb{C}^*)_{\mathcal{O}}\rtimes\mathbb{C}^*}(\mathcal{R}_{\mathrm{GL}_2,N})\cong\mathcal{A}_{q,z}.
\]
\end{proposition}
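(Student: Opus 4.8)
The map $\mathbf{z}^*(\iota_*)^{-1}$ of~\eqref{eqn:embedding} is already known to be an injective $\mathbb{C}$-algebra homomorphism, so the content of the statement is the identification of its image with $\mathcal{A}_{q,z}$. The plan is to prove the two inclusions ``$\mathcal{A}_{q,z}\subseteq\operatorname{image}$'' and ``$\operatorname{image}\subseteq\mathcal{A}_{q,z}$'' separately, the first by exhibiting each generator of $\mathcal{A}_{q,z}$ as the image of an explicit $K$-theory class, and the second by a generation argument along the filtration of $\mathcal{R}_{\mathrm{GL}_2,N}$ by the subvarieties $\mathcal{R}_{\leq\lambda}$.

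For ``$\mathcal{A}_{q,z}\subseteq\operatorname{image}$'', first note that the symmetric Laurent polynomials in $w_1,w_2$ over $\mathbb{C}[q^{\pm1},z^{\pm1}]$ are exactly $K^{(\mathrm{GL}_2\times\mathbb{C}^*)_{\mathcal{O}}\rtimes\mathbb{C}^*}(\mathrm{pt})$, carried into $K^{(\mathrm{GL}_2\times\mathbb{C}^*)_{\mathcal{O}}\rtimes\mathbb{C}^*}(\mathcal{R}_{\mathrm{GL}_2,N})$ as the classes supported on the base-point orbit $\mathcal{R}_{\leq 0}$; one checks directly from the definitions of $\iota$ and $\mathbf{z}$ that $\mathbf{z}^*(\iota_*)^{-1}$ sends these to the corresponding symmetric Laurent polynomials in $\mathcal{D}_{q,z}$. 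The dressed minuscule monopole operators $E_n[f]$, $F_n[f]$ are, by the computations of~\cite{BFN19,FT19} recorded in Definition~\ref{def:monopole}, precisely the images under $\mathbf{z}^*(\iota_*)^{-1}$ of classes supported on the closures $\mathcal{R}_{\leq\varpi}$ for the ``minuscule'' cocharacters $\varpi\in\{(1,0),(1,1),(-1,0),(-1,-1)\}$: the rational-function shape of the formulas is precisely what one obtains by pushing such classes forward to a point using the fixed-point (abelianization) localization theorem. Since the image is a subalgebra, it therefore contains $\mathcal{A}_{q,z}$.

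For ``$\operatorname{image}\subseteq\mathcal{A}_{q,z}$'', one must show that $K^{(\mathrm{GL}_2\times\mathbb{C}^*)_{\mathcal{O}}\rtimes\mathbb{C}^*}(\mathcal{R}_{\mathrm{GL}_2,N})$ is generated as an algebra by the minuscule monopole operators and the symmetric Laurent polynomials in $w_1,w_2$. Using $K^{(\mathrm{GL}_2\times\mathbb{C}^*)_{\mathcal{O}}\rtimes\mathbb{C}^*}(\mathcal{R}_{\mathrm{GL}_2,N})=\varinjlim_\lambda K^{(\mathrm{GL}_2\times\mathbb{C}^*)_{\mathcal{O}}\rtimes\mathbb{C}^*}(\mathcal{R}_{\leq\lambda})$ and, for each dominant coweight $\lambda$, the excision exact sequence
\[
K^{(\mathrm{GL}_2\times\mathbb{C}^*)_{\mathcal{O}}\rtimes\mathbb{C}^*}(\mathcal{R}_{<\lambda})\longrightarrow K^{(\mathrm{GL}_2\times\mathbb{C}^*)_{\mathcal{O}}\rtimes\mathbb{C}^*}(\mathcal{R}_{\leq\lambda})\longrightarrow K^{(\mathrm{GL}_2\times\mathbb{C}^*)_{\mathcal{O}}\rtimes\mathbb{C}^*}(\mathcal{R}^\lambda)\longrightarrow 0,
\]
with $\mathcal{R}_{<\lambda}=\bigcup_{\mu<\lambda}\mathcal{R}_{\leq\mu}$ (all of this as in~\cite{BFN18}, after the usual passage to the quotients $\mathcal{R}^d_{\leq\lambda}$), I would induct on the dominance order. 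The graded piece $K^{(\mathrm{GL}_2\times\mathbb{C}^*)_{\mathcal{O}}\rtimes\mathbb{C}^*}(\mathcal{R}^\lambda)$ is the equivariant $K$-theory of a vector bundle over $\Gr_{\mathrm{GL}_2}^\lambda$, hence of a point for the parahoric stabilizer of $\lambda$, and so it is generated over the symmetric Laurent polynomials by at most two classes. The inductive step reduces to a triangularity assertion: writing $\lambda\in\mathbb{Z}^2$ as a sum of the lattice generators $(1,0)$, $(1,1)$ and their negatives, the corresponding convolution product of minuscule monopole operators, times a symmetric Laurent polynomial, equals modulo the image of $K(\mathcal{R}_{<\lambda})$ a generator of $K(\mathcal{R}^\lambda)$ over the symmetric part. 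This I would verify by extracting the leading term of the product directly from the abelianized formulas of Definition~\ref{def:monopole}.

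The main obstacle is precisely this triangularity/leading-term analysis: one has to check that convolving minuscule monopole operators produces no unexpected cancellation that would push the whole product into $K(\mathcal{R}_{<\lambda})$, and conversely that the surviving leading terms do exhaust a generating set of each $K(\mathcal{R}^\lambda)$ over the symmetric part. For $\mathrm{GL}_2$ with $N=\Hom_{\mathbb{C}}(\mathbb{C}^2,\mathbb{C}^2)$ this is tractable, since there are only two fundamental cocharacters and the $w$-dependence is through degree-$\leq 2$ symmetric functions, so the product formulas can be organised by the ``sign pattern'' of the index sets $I$ occurring in successive factors. The remaining points --- compatibility with the loop-rotation and flavour $\mathbb{C}^*$-actions, and the passage to the double cover producing $q^{\pm\frac12}$ --- are routine. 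Combining the two inclusions gives the isomorphism $K^{(\mathrm{GL}_2\times\mathbb{C}^*)_{\mathcal{O}}\rtimes\mathbb{C}^*}(\mathcal{R}_{\mathrm{GL}_2,N})\cong\mathcal{A}_{q,z}$.
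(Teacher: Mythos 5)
The paper does not actually prove this proposition here: it is imported verbatim as \cite{AS24}, Proposition~3.5, so there is no in-paper argument to measure your proposal against. Your outline is the standard strategy for results of this type (going back to \cite{BFN18,BFN19,FT19}): injectivity of $\mathbf{z}^*(\iota_*)^{-1}$ is known, the symmetric Laurent polynomials are the image of the classes on the fibre $\mathcal{R}_{\leq 0}\cong N_{\mathcal{O}}$ over the base point, the operators $E_n[f]$, $F_n[f]$ are the localization formulas for classes on the minuscule orbit closures (note the dominant representative for $F_1$ is $(0,-1)$, not $(-1,0)$), and surjectivity is an induction along the dominance order using the excision sequence and the Thom isomorphism identifying $K(\mathcal{R}^\lambda)$ with $R(\mathrm{Stab}_\lambda)$, which is free of rank at most two over the symmetric part. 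All of this is correct in outline and is presumably close to the route taken in the cited reference.

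The genuine gap is that the one step carrying all the mathematical content of the surjectivity direction --- the triangularity claim --- is described but never performed. You yourself flag it (``This I would verify by extracting the leading term\dots''), but without it the argument is a plan rather than a proof. Concretely, you must compute, from the abelianized formulas of Definition~\ref{def:monopole}, the restriction to $\mathcal{R}^{\lambda}$ of a product of minuscule monopole operators whose cocharacters sum to $\lambda$: e.g.\ for $\lambda=(2,0)$ one needs the coefficients of $D_1^2$ and $D_2^2$ in $E_1[f]*E_1[g]$, after the apparent poles at $w_1=q^k w_2$ cancel, to produce a set of classes generating $R(T_\lambda)$ over $R(\mathrm{GL}_2\times\mathbb{C}^*\times\mathbb{C}^*)$, and similarly along every chain in the dominance order on each connected component of $\Gr_{\mathrm{GL}_2}$. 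Two things could in principle go wrong and must be excluded by explicit computation: unexpected cancellation pushing the product entirely into $K(\mathcal{R}_{<\lambda})$, and the leading terms failing to span the top graded piece (for instance only hitting a proper ideal of $R(\mathrm{Stab}_\lambda)$). For $\mathrm{GL}_2$ this is a finite, manageable computation, but until it is written down the two inclusions have not been established.
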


The conjugation action of $\mathrm{GL}_2$ on~$N=\Hom_{\mathbb{C}}(\mathbb{C}^2,\mathbb{C}^2)$ descends to an action of~$\mathrm{PGL}_2$, so we can consider the quantized Coulomb branch associated to the gauge group~$\mathrm{PGL}_2$ and representation~$N$. Let $\mathbb{C}^*$ act on~$\mathcal{D}_{q,z}$ by simultaneously rescaling the variables~$w_1$ and~$w_2$. The following is immediate from Proposition~\ref{prop:monopolegenerate} the isomorphism~\eqref{eqn:Hamiltonianreduction}.

\begin{proposition}
\label{prop:PGL2monopole}
There is a $\mathbb{C}$-algebra isomorphism 
\[
K^{(\mathrm{PGL}_2\times\mathbb{C}^*)_{\mathcal{O}}\rtimes\mathbb{C}^*}(\mathcal{R}_{\mathrm{PGL}_2,N})\cong\mathcal{A}_{q,z}^{\mathbb{C}^*}/(D_1D_2-1).
\]
\end{proposition}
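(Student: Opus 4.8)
The plan is to deduce the proposition directly by combining the two results cited immediately before it, so the proof should be short. First I would invoke Proposition~\ref{prop:monopolegenerate}, which gives a $\mathbb{C}$-algebra isomorphism $K^{(\mathrm{GL}_2\times\mathbb{C}^*)_{\mathcal{O}}\rtimes\mathbb{C}^*}(\mathcal{R}_{\mathrm{GL}_2,N})\cong\mathcal{A}_{q,z}$, where $\mathcal{A}_{q,z}\subset\mathcal{D}_{q,z}$ is the subalgebra generated by the dressed minuscule monopole operators and the symmetric Laurent polynomials in the $w_i$. Under this identification the embedding~\eqref{eqn:embedding} is precisely the inclusion $\mathcal{A}_{q,z}\hookrightarrow\mathcal{D}_{q,z}$. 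Then I would feed this into the isomorphism~\eqref{eqn:Hamiltonianreduction} from Lemma~\ref{lem:GL2PGL2}, namely
\[
K^{(\mathrm{PGL}_2\times\mathbb{C}^*)_{\mathcal{O}}\rtimes\mathbb{C}^*}(\mathcal{R}_{\mathrm{PGL}_2,N})\cong K^{(\mathrm{GL}_2\times\mathbb{C}^*)_{\mathcal{O}}\rtimes\mathbb{C}^*}(\mathcal{R}_{\mathrm{GL}_2,N})^{\mathbb{C}^*}/(D_1D_2-1),
\]
where $\mathbb{C}^*$ acts on $\mathcal{D}_{q,z}$ by simultaneously rescaling $w_1$ and $w_2$. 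Substituting $\mathcal{A}_{q,z}$ for $K^{(\mathrm{GL}_2\times\mathbb{C}^*)_{\mathcal{O}}\rtimes\mathbb{C}^*}(\mathcal{R}_{\mathrm{GL}_2,N})$ yields $K^{(\mathrm{PGL}_2\times\mathbb{C}^*)_{\mathcal{O}}\rtimes\mathbb{C}^*}(\mathcal{R}_{\mathrm{PGL}_2,N})\cong\mathcal{A}_{q,z}^{\mathbb{C}^*}/(D_1D_2-1)$, which is the claim.

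The one point that needs a word of care is compatibility of the two statements: Lemma~\ref{lem:GL2PGL2} describes $K^{(\mathrm{PGL}_2\times\mathbb{C}^*)_{\mathcal{O}}\rtimes\mathbb{C}^*}(\mathcal{R}_{\mathrm{PGL}_2,N})$ as a subquotient of $\mathcal{D}_{q,z}$ via the \emph{same} embedding~\eqref{eqn:embedding} that Proposition~\ref{prop:monopolegenerate} uses to identify the $\mathrm{GL}_2$ Coulomb branch with $\mathcal{A}_{q,z}$. Hence the $\mathbb{C}^*$-invariants and the quotient by $(D_1D_2-1)$ appearing in~\eqref{eqn:Hamiltonianreduction} are literally the $\mathbb{C}^*$-invariants and quotient computed inside $\mathcal{A}_{q,z}\subset\mathcal{D}_{q,z}$; no further identification is required. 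I should also note that $D_1D_2-1$ is a central element (it commutes with all the $w_i$ and with $D_1$, $D_2$, and one checks it is $\mathbb{C}^*$-invariant since the $\mathbb{C}^*$-action fixes the $D_r$), so the quotient $\mathcal{A}_{q,z}^{\mathbb{C}^*}/(D_1D_2-1)$ is a well-defined $\mathbb{C}$-algebra and the isomorphism is one of algebras, not merely of vector spaces.

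I do not anticipate a serious obstacle here, since all the substantive work has been done in the preceding lemmas and propositions; the proof is essentially the sentence ``this is immediate from Proposition~\ref{prop:monopolegenerate} and the isomorphism~\eqref{eqn:Hamiltonianreduction}'' together with the bookkeeping remark above about the two embeddings being the same. If anything deserves a line of justification it is the observation that passing to $\mathbb{C}^*$-invariants and then quotienting by $(D_1D_2-1)$ commutes in the needed sense, i.e.\ that $\mathcal{A}_{q,z}^{\mathbb{C}^*}/(D_1D_2-1)$ really is the image of $K^{(\mathrm{GL}_2\times\mathbb{C}^*)_{\mathcal{O}}\rtimes\mathbb{C}^*}(\mathcal{R}_{\mathrm{GL}_2,N})^{\mathbb{C}^*}/(D_1D_2-1)$ under the monopole-operator identification; but this follows because the isomorphism of Proposition~\ref{prop:monopolegenerate} is $\mathbb{C}^*$-equivariant (the $\mathbb{C}^*$-action on both sides being rescaling of the $w_i$) and sends $D_1D_2-1$ to $D_1D_2-1$.
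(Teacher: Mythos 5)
Your proposal is correct and matches the paper's argument exactly: the paper also derives this proposition as an immediate consequence of Proposition~\ref{prop:monopolegenerate} and the isomorphism~\eqref{eqn:Hamiltonianreduction}, with both identifications made through the same embedding~\eqref{eqn:embedding}. One small inaccuracy in your side remark: $D_1D_2$ does \emph{not} commute with $w_1$ or $w_2$ individually (since $D_rw_s=q^{2\delta_{rs}}w_sD_r$, one has $D_1D_2\,w_1=q^2w_1\,D_1D_2$); rather, $D_1D_2-1$ is central only on the $\mathbb{C}^*$-invariant subalgebra, where every coefficient is a function of $w_1w_2^{-1}$ and hence commutes with $D_1D_2$ -- which is all that is needed for the quotient $\mathcal{A}_{q,z}^{\mathbb{C}^*}/(D_1D_2-1)$ to be a well-defined algebra.
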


\section{Proof of the main result}

In this section, we prove that the quantized Coulomb branch described above is isomorphic to a $\mathbb{Z}_2$-invariant subalgebra of the relative skein algebra of a once-punctured torus.

\subsection{Algebraic preliminaries}

Let us write $\widetilde{\mathcal{D}}_{q,z}^0$ for the $\mathbb{C}[q^{\pm1},z^{\pm1}]$-algebra generated by variables $w_1^{\frac{1}{2}}$, $w_2^{\frac{1}{2}}$, $D_1$, $D_2$, and their inverses, subject to the commutation relations 
\[
[D_1,D_2]=[w_1^{\frac{1}{2}},w_2^{\frac{1}{2}}]=0, \quad D_rw_s^{\frac{1}{2}}=q^{\delta_{rs}}w_s^{\frac{1}{2}}D_r.
\]
Let us write $\widetilde{\mathcal{D}}_{q,z}$ for the localization of $\widetilde{\mathcal{D}}_{q,z}^0$ at the multiplicative subset generated by $w_1-q^kw_2$ for $k\in\mathbb{Z}$ and $1-q^k$ for $k\in\mathbb{Z}\setminus\{0\}$. Note that the algebra $\mathcal{D}_{q,z}$ considered above is a subalgebra of~$\widetilde{\mathcal{D}}_{q,z}$. There is a $\mathbb{C}^*$-action on~$\widetilde{\mathcal{D}}_{q,z}$ given by simultaneously rescaling the variables~$w_i^{\frac{1}{2}}$.

\begin{lemma}
\label{lem:relateambientrings}
Define the operator $\varpi\in\End\mathbb{C}_{q,t}(X)$ by the rule $(\varpi f)(X)=f(qX)$ for $f\in\mathbb{C}_{q,t}(X)$. Then the assignments 
\[
q \mapsto q, \quad z\mapsto q^{-1}t, \quad w_1^{\frac{1}{2}}w_2^{-\frac{1}{2}}\mapsto X, \quad D_1\mapsto -q^{-1}X^{-2}\varpi, \quad D_2\mapsto -q^{-1}X^2\varpi^{-1}
\]
determine a well defined injective $\mathbb{C}$-algebra homomorphism $\widetilde{\mathcal{D}}_{q,z}^{\mathbb{C}^*}/(D_1D_2-1)\hookrightarrow\End\mathbb{C}_{q,t}(X)$.
\end{lemma}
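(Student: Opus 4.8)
The plan is to verify three things: that the assignments are compatible with the defining relations of $\widetilde{\mathcal{D}}_{q,z}^0$, that they descend through the $\mathbb{C}^*$-invariance and the ideal $(D_1D_2-1)$, and that the resulting homomorphism is injective. First I would check the commutation relations. The ring $\widetilde{\mathcal{D}}_{q,z}^0$ is generated by $w_1^{1/2}, w_2^{1/2}, D_1, D_2$ with $[D_1,D_2]=[w_1^{1/2},w_2^{1/2}]=0$ and $D_r w_s^{1/2} = q^{\delta_{rs}} w_s^{1/2} D_r$. On the $\mathbb{C}^*$-invariant part all that survives is the single variable $X = w_1^{1/2} w_2^{-1/2}$ together with $D_1, D_2$; one computes directly that $D_1 X = D_1 w_1^{1/2} w_2^{-1/2} = q\, w_1^{1/2} D_1 w_2^{-1/2} = q\, w_1^{1/2} w_2^{-1/2} D_1 = qX D_1$, and symmetrically $D_2 X = q^{-1} X D_2$. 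On the operator side, $(-q^{-1}X^{-2}\varpi)\cdot X = -q^{-1}X^{-2}(qX)\varpi = -X^{-1}\varpi$, while $qX\cdot(-q^{-1}X^{-2}\varpi) = -X^{-1}\varpi$, so the relation $D_1 X = qXD_1$ holds; the computation for $D_2$ is analogous, and $[D_1,D_2]$ maps to zero because $(-q^{-1}X^{-2}\varpi)(-q^{-1}X^2\varpi^{-1}) = q^{-2}X^{-2}\varpi X^2 \varpi^{-1} = q^{-2}X^{-2}(q^2X^2) = 1 = (-q^{-1}X^2\varpi^{-1})(-q^{-1}X^{-2}\varpi)$. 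This last identity simultaneously shows $D_1 D_2 \mapsto 1$, so the homomorphism indeed factors through the quotient by $(D_1D_2 - 1)$; and one checks $z = q^{-1}t$ is central and that the localized elements $w_1 - q^k w_2$, i.e. $X^2 - q^k$ after passing to the invariant subring up to an invertible factor, and $1 - q^k$ map to invertible operators in $\End\mathbb{C}_{q,t}(X)$ (here one uses that $q^{1/2}$ is a free variable, so $q^k \ne 1$ for $k \ne 0$, and that $X^2 - q^k$ is invertible in the fraction field $\mathbb{C}_{q,t}(X)$).

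Next I would pin down the source ring more carefully: $\widetilde{\mathcal{D}}_{q,z}^{\mathbb{C}^*}$ is the subalgebra of $\mathbb{C}^*$-invariants, which consists of those elements in which $w_1$ and $w_2$ appear only through the ratio $X = w_1^{1/2}w_2^{-1/2}$ and through balanced powers; modulo $(D_1D_2-1)$ one can use $D_2 = D_1^{-1}$ to eliminate $D_2$, so a general element is a finite sum $\sum_n c_n(X) D_1^n$ with $c_n(X) \in \mathbb{C}_{q,t}(X)$ (the localization having introduced denominators that are products of $X^2 - q^k$ and $1 - q^k$). I would make this normal-form statement precise, since it is what both the descent and the injectivity arguments rest on.

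For injectivity, suppose an element $\sum_n c_n(X) D_1^n$ maps to the zero operator. Its image is $\sum_n c_n(X)(-q^{-1}X^{-2}\varpi)^n$. Now $(-q^{-1}X^{-2}\varpi)^n = (-q^{-1})^n\, X^{-2}(\varpi X^{-2})(\varpi^2 X^{-2})\cdots(\varpi^{n-1}X^{-2})\,\varpi^n$, and since $\varpi^j X^{-2} = q^{-2j} X^{-2}$, the scalar product telescopes to $(-q^{-1})^n q^{-n(n-1)}X^{-2n}\varpi^n$ (with the analogous formula, using $\varpi^{-1}$, for $n < 0$). Thus the image is $\sum_n d_n(X)\,\varpi^n$ with $d_n(X)$ a nonzero scalar multiple of $c_n(X)X^{-2n}$. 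Because the operators $\varpi^n$ are $\mathbb{C}_{q,t}(X)$-linearly independent in $\End\mathbb{C}_{q,t}(X)$ — indeed $\varpi^n$ acts on $X^m$ as multiplication by $q^{nm}$, and a nontrivial relation $\sum_n d_n(X)\varpi^n = 0$ evaluated on enough distinct monomials $X^m$ forces all $d_n = 0$ by a Vandermonde argument — we conclude all $c_n(X) = 0$, i.e. the original element was zero.

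\textbf{Main obstacle.} The genuinely delicate point is not the relation-checking but \emph{correctly identifying the invariant subring modulo the ideal} and confirming that the normal form $\sum_n c_n(X)D_1^n$ with $c_n$ in the appropriate localization of $\mathbb{C}_{q,t}[X^{\pm 1}]$ is faithful — i.e. that passing to $\mathbb{C}^*$-invariants, then quotienting by $(D_1D_2-1)$, does not collapse the algebra further than expected, and that the localization on the source side matches exactly the denominators $X^2 - q^k$ that appear on the target side (not more, not fewer). Once the normal form is established, the linear independence of the $\varpi^n$ makes injectivity essentially automatic.
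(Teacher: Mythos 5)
Your proposal is correct and follows essentially the same route as the paper's proof: reduce $\mathbb{C}^*$-invariant elements to a normal form $\sum_n c_n(X)D_1^n$ with coefficients rational in $X=w_1^{\frac{1}{2}}w_2^{-\frac{1}{2}}$ (using $D_1D_2=1$ to eliminate $D_2$), verify the defining relations, and deduce injectivity from the independence of the images of the $D_1^n$. The ``delicate point'' you flag is handled in the paper by exactly the degree computation you sketch (rewriting the denominators $w_1-q^kw_2$ as units times $1-q^kw_2w_1^{-1}$ and noting that invariance forces total degree zero), while your explicit telescoping of $(-q^{-1}X^{-2}\varpi)^n$ and the Vandermonde argument for the independence of the $\varpi^n$ supply details the paper states without proof.
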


\begin{proof}
Any element $f\in\widetilde{\mathcal{D}}_{q,z}$ can be written 
\[
f=\sum_{l_1,l_2\in\mathbb{Z}}f_{l_1,l_2}D_1^{l_1}D_2^{l_2}
\]
where coefficient $f_{l_1,l_2}$ has the form 
\[
\frac{\sum a_{i_1,i_2}w_1^{\frac{i_1}{2}}w_2^{\frac{i_2}{2}}}{(w_1-q^{k_1}w_2)^{j_1}\dots(w_1-q^{k_n}w_2)^{j_n}}=\frac{\sum a_{i_1,i_2}w_1^{\frac{i_1}{2}-j_1-\dots-j_n}w_2^{\frac{i_2}{2}}}{(1-q^{k_1}\frac{w_2}{w_1})^{j_1}\dots(1-q^{k_n}\frac{w_2}{w_1})^{j_n}}
\]
for some $a_{i_1,i_2}\in\mathbb{C}(q,z)$ with the sums running over all $i_1$,~$i_2\in\mathbb{Z}$. If this coefficient is $\mathbb{C}^*$-invariant then numerator of the last expression must be $\mathbb{C}^*$-invariant, that is, $\frac{i_1}{2}+\frac{i_2}{2}-j_1-\dots-j_n=0$. It follows that if $f$ is $\mathbb{C}^*$-invariant then each coefficient $f_{l_1,l_2}$ can be written as a rational expression in $q$,~$z$, and~$w_1^{\frac{1}{2}}w_2^{-\frac{1}{2}}$. By applying the above rules, we can thus map $f$ to an operator in $\End\mathbb{C}_{q,t}(X)$. These assignments map the commutation relations in~$\widetilde{\mathcal{D}}_{q,z}^{\mathbb{C}^*}$ to relations in $\End\mathbb{C}_{q,t}(X)$ and therefore provide a well defined $\mathbb{C}$-algebra homomorphism $\widetilde{\mathcal{D}}_{q,z}^{\mathbb{C}^*}\rightarrow\End\mathbb{C}_{q,t}(X)$. Moreover, the element $D_1D_2-1$ maps to zero, so we get a $\mathbb{C}$-algebra homomorphism $\widetilde{\mathcal{D}}_{q,z}^{\mathbb{C}^*}/(D_1D_2-1)\rightarrow\End\mathbb{C}_{q,t}(X)$.

It remains to show that this map is injective. If $\bar{f}$ lies in the kernel of this map, then we can represent $\bar{f}$ by an expression $f$ which is a Laurent polynomial in~$D_1$ whose coefficients are rational functions in $q$,~$z$, and~$w_1^{\frac{1}{2}}w_2^{-\frac{1}{2}}$. The images of these elements in $\End\mathbb{C}_{q,t}(X)$ satisfy no relations, so we must have $f=0$. This completes the proof.
\end{proof}

We now write $\widetilde{\mathcal{A}}_{q,z}$ for the $\mathbb{C}[q^{\pm1},z^{\pm1}]$-subalgebra of~$\widetilde{\mathcal{D}}_{q,z}$ generated by all dressed minuscule monopole operators $E_n[f]$ and $F_n[f]$, together with all symmetric Laurent polynomials in the variables~$w_i^{\frac{1}{2}}$. Note that the algebra $\mathcal{A}_{q,z}$ considered above is a subalgebra of~$\widetilde{\mathcal{A}}_{q,z}$. The action of~$\mathbb{C}^*$ on~$\widetilde{\mathcal{D}}_{q,z}$ restricts to an action on~$\widetilde{\mathcal{A}}_{q,z}$, and we have the following.

\begin{lemma}
\label{lem:invariantgenerators}
$\widetilde{\mathcal{A}}_{q,z}^{\mathbb{C}^*}$ is generated as a $\mathbb{C}[q^{\pm1},z^{\pm1}]$-algebra by the element $w_1^{\frac{1}{2}}w_2^{-\frac{1}{2}}+w_1^{-\frac{1}{2}}w_2^{\frac{1}{2}}$ together with all elements of the form 
\begin{equation}
\label{eqn:invariantgenerators}
w_1^{-\frac{k}{2}}w_2^{-\frac{k}{2}}E_n[f], \quad w_1^{-\frac{k}{2}}w_2^{-\frac{k}{2}}F_n[f]
\end{equation}
where $n=1,2$ is an integer, $f$ is a homogeneous symmetric polynomial in $n$ variables, and $k$ is the total degree of~$f$.
\end{lemma}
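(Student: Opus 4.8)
The plan is to reduce the claim to a statement about the ambient algebra $\widetilde{\mathcal{D}}_{q,z}$ and then exploit the explicit $\mathbb{Z}\times\mathbb{Z}$-grading coming from the monomials $D_1^{l_1}D_2^{l_2}$ together with the weight decomposition of the $\mathbb{C}^*$-action. First I would set up coordinates: decompose $\widetilde{\mathcal{A}}_{q,z}\subset\widetilde{\mathcal{D}}_{q,z}$ according to the $D$-degree $(l_1,l_2)$, noting that the generators behave well with respect to this grading — a symmetric Laurent polynomial in the $w_i^{1/2}$ sits in degree $(0,0)$, while each $E_n[f]$ (resp.\ $F_n[f]$) is a sum over $|I|=n$ of a coefficient times $\prod_{r\in I}D_r^{\pm1}$, hence lies in the span of degrees $(1,0),(0,1)$ for $n=1$ and degree $(\pm1,\pm1)$ for $n=2$. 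Since the $\mathbb{C}^*$-action rescales the $w_i^{1/2}$ simultaneously and fixes the $D_r$, it acts by a grading on each coefficient ring, and one checks — exactly as in the proof of Lemma~\ref{lem:relateambientrings} — that a product of the listed monopole operators and symmetric polynomials is $\mathbb{C}^*$-invariant if and only if the total $w$-degree of its coefficient matches the total $D$-degree in the appropriate sense. Concretely, $E_n[f]$ carries $w$-weight $\deg f$ and $D$-degree $n$ (in the sense that the extra factor $\prod\frac{1-qzw_rw_s^{-1}}{1-w_sw_r^{-1}}$ contributes $w$-weight $0$), so the normalized element $w_1^{-k/2}w_2^{-k/2}E_n[f]$ with $k=\deg f$ has $w$-weight $\deg f-k\cdot\frac{n}{n}$... — one must track the bookkeeping, but the upshot is that $w_1^{-k/2}w_2^{-k/2}E_n[f]$ and $w_1^{-k/2}w_2^{-k/2}F_n[f]$ are precisely the $\mathbb{C}^*$-invariant building blocks, along with $w_1^{1/2}w_2^{-1/2}+w_1^{-1/2}w_2^{1/2}$, which generates the invariant part of the symmetric-polynomial subalgebra.

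The argument then has two halves. For the inclusion $\supseteq$, each displayed element is manifestly $\mathbb{C}^*$-invariant and lies in $\widetilde{\mathcal{A}}_{q,z}$ (a product of a symmetric Laurent polynomial in the $w_i^{1/2}$ — namely $(w_1w_2)^{-k/2}$, which is symmetric — with a monopole operator), and $w_1^{1/2}w_2^{-1/2}+w_1^{-1/2}w_2^{1/2}$ is a symmetric Laurent polynomial in the $w_i^{1/2}$, so it too lies in $\widetilde{\mathcal{A}}_{q,z}^{\mathbb{C}^*}$. For $\subseteq$, I would take an arbitrary $\mathbb{C}^*$-invariant element of $\widetilde{\mathcal{A}}_{q,z}$, write it as a polynomial in the generators $E_n[f]$, $F_n[f]$, and symmetric Laurent polynomials $p$ in the $w_i^{1/2}$, and project onto each $\mathbb{C}^*$-weight space; invariance forces us to work with weight-zero combinations. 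Here the key structural input is that the subalgebra of symmetric Laurent polynomials in the $w_i^{1/2}$ that are $\mathbb{C}^*$-invariant is exactly $\mathbb{C}[q^{\pm1},z^{\pm1}]$-generated by $w_1^{1/2}w_2^{-1/2}+w_1^{-1/2}w_2^{1/2}$ and $(w_1w_2)^{\pm1/2}$; and more generally, a symmetric Laurent polynomial of $w$-weight $m$ is $(w_1w_2)^{m/2}$ times a symmetric Laurent polynomial in $w_1^{1/2}w_2^{-1/2}$. Using this, each monomial in the generators can be rewritten, modulo reordering (which only changes things by elements of lower $D$-degree, controlled by the commutation relations $D_rw_s^{1/2}=q^{\delta_{rs}}w_s^{1/2}D_r$), as a $\mathbb{C}[q^{\pm1},z^{\pm1}]$-combination of products of normalized monopole operators and powers of $w_1^{1/2}w_2^{-1/2}+w_1^{-1/2}w_2^{1/2}$. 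An induction on total $D$-degree then closes the argument.

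The main obstacle I anticipate is precisely the bookkeeping in moving $w$-powers past the $D_r$: when I factor a $\mathbb{C}^*$-invariant monomial as (symmetric polynomial in $w_i^{1/2}$) $\cdot$ (product of monopole operators), the symmetric polynomial need not itself be $\mathbb{C}^*$-invariant — only the whole product is — so I must peel off a compensating factor $(w_1w_2)^{\pm1/2}$, commute it through to combine with the monopole operators into the normalized form $w_1^{-k/2}w_2^{-k/2}E_n[f]$, and verify that the leftover genuinely lies in the invariant symmetric subalgebra generated by $w_1^{1/2}w_2^{-1/2}+w_1^{-1/2}w_2^{1/2}$. Getting the homogeneity conventions consistent — in particular confirming that the coefficient factors $\prod_{r\in I,s\notin I}\frac{1-qzw_rw_s^{-1}}{1-w_sw_r^{-1}}$ are $w$-weight-zero, so that the $w$-weight of $E_n[f]$ is exactly $\deg f$ — is the one place where a sign or exponent slip would break the statement, and it is worth stating as a separate sublemma before the main induction.
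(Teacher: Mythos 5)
Your proposal follows essentially the same route as the paper: show the normalized elements are $\mathbb{C}^*$-invariant and generate $\widetilde{\mathcal{A}}_{q,z}$ over the ring $\mathcal{S}$ of symmetric Laurent polynomials in the $w_i^{\frac{1}{2}}$, then use the weight decomposition to force the $\mathcal{S}$-coefficients of an invariant element to have weight zero, and finally observe that the weight-zero part of $\mathcal{S}$ is generated by $w_1^{\frac{1}{2}}w_2^{-\frac{1}{2}}+w_1^{-\frac{1}{2}}w_2^{\frac{1}{2}}$. The paper dispenses with your induction on $D$-degree and the commutation bookkeeping by simply choosing the monomials $M_i$ in the normalized generators to be linearly independent over $\mathcal{S}$, so that invariance of $P=\sum_i c_iM_i$ immediately forces each $c_i$ to be invariant.

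One correction: your ``key structural input'' asserts that the $\mathbb{C}^*$-invariant part of $\mathcal{S}$ is generated by $w_1^{\frac{1}{2}}w_2^{-\frac{1}{2}}+w_1^{-\frac{1}{2}}w_2^{\frac{1}{2}}$ \emph{and} $(w_1w_2)^{\pm\frac{1}{2}}$. The element $(w_1w_2)^{\frac{1}{2}}=w_1^{\frac{1}{2}}w_2^{\frac{1}{2}}$ is not $\mathbb{C}^*$-invariant (it has weight $2$ under simultaneous rescaling of the $w_i^{\frac{1}{2}}$), and including it as a generator would prove generation by a strictly larger set than the lemma claims. The correct statement, which is what your argument actually needs, is that the weight-zero symmetric Laurent polynomials are spanned by $w_1^{\frac{a}{2}}w_2^{-\frac{a}{2}}+w_1^{-\frac{a}{2}}w_2^{\frac{a}{2}}$, $a\geq 0$, hence are polynomials in $w_1^{\frac{1}{2}}w_2^{-\frac{1}{2}}+w_1^{-\frac{1}{2}}w_2^{\frac{1}{2}}$ alone; the factors $(w_1w_2)^{\pm\frac{1}{2}}$ appear only as the normalizing prefactors already absorbed into the elements \eqref{eqn:invariantgenerators}. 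With that fixed, your argument goes through.
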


\begin{proof}
It follows from the definitions of $E_n[f]$ and $F_n[f]$ that the above elements are all contained in~$\widetilde{\mathcal{A}}_{q,z}^{\mathbb{C}^*}$. Let $\mathcal{S}$ be the $\mathbb{C}[q^{\pm1},z^{\pm1}]$-algebra consisting of symmetric Laurent polynomials in the~$w_i^{\frac{1}{2}}$. Then the elements~\eqref{eqn:invariantgenerators} generate~$\widetilde{\mathcal{A}}_{q,z}$ as an $\mathcal{S}$-algebra. Therefore, we can write any element $P\in\widetilde{\mathcal{A}}_{q,z}^{\mathbb{C}^*}$ as a finite sum of the form  $P=\sum_ic_iM_i$ where $c_i\in\mathcal{S}$ and $M_i$ is a monomial in the expressions~\eqref{eqn:invariantgenerators} for every index~$i$. We may assume that the monomials $M_i$ appearing in this expression are linearly independent over~$\mathcal{S}$. Since each $M_i$ is $\mathbb{C}^*$-invariant, it then follows that the coefficients $c_i$ must be $\mathbb{C}^*$-invariant. Hence each $c_i$ is a symmetric Laurent polynomial of degree zero in the~$w_i^{\frac{1}{2}}$. The $\mathbb{C}[q^{\pm1},z^{\pm1}]$-algebra of such polynomials is generated by $w_1^{\frac{1}{2}}w_2^{-\frac{1}{2}}+w_1^{-\frac{1}{2}}w_2^{\frac{1}{2}}$, so we are done.
\end{proof}

\begin{lemma}
\label{lem:Coulombrep}
There is an injective $\mathbb{C}$-algebra homomorphism $\widetilde{\mathcal{A}}_{q,z}^{\mathbb{C}^*}/(D_1D_2-1)\hookrightarrow\End\mathbb{C}_{q,t}(X)$ mapping $q\mapsto q$, $z\mapsto q^{-1}t$, and 
\begin{align*}
w_1^{\frac{1}{2}}w_2^{-\frac{1}{2}}+w_1^{-\frac{1}{2}}w_2^{\frac{1}{2}} &\mapsto X+X^{-1}, \\
w_1^{-\frac{k}{2}}w_2^{-\frac{k}{2}}E_1[x^k] &\mapsto q^{-1}t^{\frac{1}{2}}\left(X^kV(X)\varpi+X^{-k}V(X^{-1})\varpi^{-1}\right), \\
w_1^{-\frac{k}{2}}w_2^{-\frac{k}{2}}F_1[x^k] &\mapsto q^{-2k-1}t^{\frac{1}{2}}\left(X^{-k}V(X)\varpi+X^kV(X^{-1})\varpi^{-1}\right),
\end{align*}
where $V$ is defined by the formula in Proposition~\ref{prop:generatingoperators}. This homomorphism maps generators of the form $w_1^{-\frac{k}{2}}w_2^{-\frac{k}{2}}E_2[f]$ and $w_1^{-\frac{k}{2}}w_2^{-\frac{k}{2}}F_2[f]$ to polynomials in $X+X^{-1}$ with coefficients in~$\mathbb{C}_{q,t}$.
\end{lemma}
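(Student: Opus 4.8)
The plan is to construct the homomorphism by combining Lemma~\ref{lem:relateambientrings} with Lemma~\ref{lem:invariantgenerators}. By Lemma~\ref{lem:relateambientrings} we have an injective homomorphism $\widetilde{\mathcal{D}}_{q,z}^{\mathbb{C}^*}/(D_1D_2-1)\hookrightarrow\End\mathbb{C}_{q,t}(X)$, and since $\widetilde{\mathcal{A}}_{q,z}^{\mathbb{C}^*}/(D_1D_2-1)$ is a subquotient of $\widetilde{\mathcal{D}}_{q,z}^{\mathbb{C}^*}/(D_1D_2-1)$ (one must check that $D_1D_2-1$ lies in $\widetilde{\mathcal{A}}_{q,z}^{\mathbb{C}^*}$, which follows since $D_1D_2$ is symmetric and is itself a product of monopole operators, or more simply since $w_1^{-1/2}w_2^{-1/2}E_1[x]$ times its $F$-counterpart produces it up to invertible scalars and symmetric polynomials), restricting this map gives the desired injective homomorphism. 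The restriction automatically sends $q\mapsto q$ and $z\mapsto q^{-1}t$. It remains to compute where the algebra generators listed in Lemma~\ref{lem:invariantgenerators} go.

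First I would record the images of the basic substitutions: under Lemma~\ref{lem:relateambientrings} we have $w_1^{1/2}w_2^{-1/2}\mapsto X$ (hence $w_1^{-1/2}w_2^{1/2}\mapsto X^{-1}$), so the first generator maps to $X+X^{-1}$ as claimed. For the monopole operators I would specialize Definition~\ref{def:monopole} with $n=1$ and $f=x^k$. Writing out $E_1[x^k]=\sum_{r=1,2}w_r^{k}\prod_{s\neq r}\frac{1-qzw_rw_s^{-1}}{1-w_sw_r^{-1}}D_r$, I would plug in $D_1\mapsto-q^{-1}X^{-2}\varpi$, $D_2\mapsto-q^{-1}X^2\varpi^{-1}$, use $w_1w_2^{-1}\mapsto X^2$ and the prefactor $w_1^{-k/2}w_2^{-k/2}$ to convert $w_1^k$ and $w_2^k$ into $X^k$ and $X^{-k}$ respectively, and recognize the resulting rational factor $\frac{1-qzX^{\pm2}}{1-X^{\mp2}}$ as (a scalar multiple of) $V(X^{\pm1})$ after substituting $z=q^{-1}t$. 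A short simplification — this is the routine calculation I would not grind through in the paper — should yield the stated formula with the overall constant $q^{-1}t^{1/2}$. The computation for $F_1[x^k]$ is entirely parallel: here the argument is shifted by $q^{-2}$, the rational factors have $w_s w_r^{-1}$ in the numerator instead, and the $D_r^{-1}$ contribute $-qX^{2}\varpi^{-1}$ and $-qX^{-2}\varpi$; bookkeeping the $q$-powers produces the factor $q^{-2k-1}t^{1/2}$.

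For the final assertion about the $E_2$ and $F_2$ operators, I would observe that for $n=2$ the index set $I$ can only be $\{1,2\}$, so the sums in Definition~\ref{def:monopole} collapse to a single term with no rational prefactor (the product over $r\in I,\,s\notin I$ is empty). Thus $w_1^{-k/2}w_2^{-k/2}E_2[f]=f(w_1,w_2)w_1^{-k/2}w_2^{-k/2}D_1D_2$, and modulo $(D_1D_2-1)$ this is just $f(w_1,w_2)w_1^{-k/2}w_2^{-k/2}$, a degree-zero symmetric Laurent polynomial in the $w_i^{1/2}$, hence a polynomial in $w_1^{1/2}w_2^{-1/2}+w_1^{-1/2}w_2^{1/2}$, which maps to a polynomial in $X+X^{-1}$ with coefficients in $\mathbb{C}_{q,t}$. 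The $F_2$ case is identical with $f$ evaluated at $q^{-2}w_i$ and $D_1^{-1}D_2^{-1}\equiv1$, so again one lands in $\mathbb{C}_{q,t}[X+X^{-1}]$.

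The main obstacle is the bookkeeping in the $E_1$/$F_1$ computations — keeping track of the powers of $q$ coming from the noncommutativity $D_r w_s^{1/2}=q^{\delta_{rs}}w_s^{1/2}D_r$ (which forces care about whether the $w$-monomials sit to the left or right of the $D_r$), from the substitution $z=q^{-1}t$, and from the signs and inverse powers of $q$ in the images of $D_1,D_2$, so that the overall constants come out to be exactly $q^{-1}t^{1/2}$ and $q^{-2k-1}t^{1/2}$. Everything else is either a direct appeal to the preceding lemmas or the degeneration of the $n=2$ sums to a single term.
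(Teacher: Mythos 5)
Your proposal is correct and follows essentially the same route as the paper: restrict the injective homomorphism of Lemma~\ref{lem:relateambientrings} to $\widetilde{\mathcal{A}}_{q,z}^{\mathbb{C}^*}/(D_1D_2-1)$, compute the images of the generators from Lemma~\ref{lem:invariantgenerators} by direct substitution, and observe that for $n=2$ the sum in Definition~\ref{def:monopole} collapses to the single term $f(w_I)\,D_1D_2$, so that modulo $D_1D_2-1$ one gets a degree-zero symmetric Laurent polynomial in the $w_i^{\frac{1}{2}}$, i.e.\ a polynomial in $X+X^{-1}$ (the paper records this explicitly as $T_{a-b}(X+X^{-1})$ for $f=x_1^ax_2^b+x_1^bx_2^a$). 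The only differences are in emphasis — the paper writes out the $E_2/F_2$ case and calls the $E_1/F_1$ case ``straightforward,'' while you do the reverse — plus a harmless slip in your sketch of the image of $D_r^{-1}$ (it is $-q^{-1}X^{\pm2}\varpi^{\mp1}$ once $\varpi^{\mp1}$ is commuted past $X^{\pm2}$), which does not affect your stated constants.
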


\begin{proof}
The homomorphism defined in Lemma~\ref{lem:relateambientrings} restricts to an injective $\mathbb{C}$-algebra homomorphism $\widetilde{\mathcal{A}}_{q,z}^{\mathbb{C}^*}/(D_1D_2-1)\hookrightarrow\End\mathbb{C}_{q,t}(X)$. Consider a homogeneous polynomial $f(x_1,x_2)=x_1^ax_2^b+x_1^bx_2^a$ with total degree $k=a+b$. Using the definitions of $E_2[f]$ and $F_2[f]$, one can check that the homomorphism maps 
\begin{align*}
w_1^{-\frac{k}{2}}w_2^{-\frac{k}{2}}E_2[f] &\mapsto X^{a-b}+X^{b-a}=T_{a-b}(X+X^{-1}), \\
w_1^{-\frac{k}{2}}w_2^{-\frac{k}{2}}F_2[f] &\mapsto q^{-2k}(X^{a-b}+X^{b-a})=q^{-2k}T_{a-b}(X+X^{-1}),
\end{align*}
where $T_n$ denotes the Chebyshev polynomial of the first kind of degree~$n$. A general homogeneous symmetric Laurent polynomial of degree~$k$ in the variables $x_1$ and~$x_2$ is a linear combination of polynomials of the form $x_1^ax_2^b+x_1^bx_2^a$. This proves the last statement of the lemma. The remaining statements follow by straightforward calculation using the definitions of the dressed minuscule monopole operators and the rules in Lemma~\ref{lem:relateambientrings}.
\end{proof}

\subsection{The skein algebra from monopole operators}

Using the embedding of Lemma~\ref{lem:Coulombrep}, we can now give two realizations of the relative skein algebra of a once-punctured torus in terms of the dressed minuscule monopole operators.

\begin{proposition}
\label{prop:skeinmonopole1}
There is a $\mathbb{C}$-algebra isomorphism 
\[
\Sk_{A,\lambda}(S_{1,1})\stackrel{\sim}{\longrightarrow}\widetilde{\mathcal{A}}_{q,z}^{\mathbb{C}^*}/(D_1D_2-1)
\]
mapping $A\mapsto q^{-\frac{1}{2}}$, $\lambda\mapsto z$, and 
\[
\alpha \mapsto w_1^{\frac{1}{2}}w_2^{-\frac{1}{2}}+w_1^{-\frac{1}{2}}w_2^{\frac{1}{2}}, \quad
\beta \mapsto q^{\frac{1}{2}}z^{-\frac{1}{2}}F_1[1], \quad
\gamma \mapsto q^2z^{-\frac{1}{2}}w_1^{-\frac{1}{2}}w_2^{-\frac{1}{2}}F_1[x].
\]
\end{proposition}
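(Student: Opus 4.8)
The plan is to construct the isomorphism by composing known faithful representations and matching generators. I would first use the faithful polynomial representation $\Phi:\Sk_{A,\lambda}(S_{1,1})\rightarrow\End\mathbb{C}_{q,t}[X^{\pm1}]^{\mathbb{Z}_2}$ from Proposition~\ref{prop:generatingoperators}, extended to $\End\mathbb{C}_{q,t}(X)$, and on the other side the injective homomorphism $\Psi:\widetilde{\mathcal{A}}_{q,z}^{\mathbb{C}^*}/(D_1D_2-1)\hookrightarrow\End\mathbb{C}_{q,t}(X)$ of Lemma~\ref{lem:Coulombrep}. The key computation is to check that the proposed assignment on generators is compatible with these two embeddings, i.e. that $\Psi$ applied to the images of $\alpha,\beta,\gamma$ equals $\Phi$ applied to $\alpha,\beta,\gamma$ after the substitution $A\mapsto q^{-\frac12}$, $\lambda\mapsto z=q^{-1}t$.

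Concretely, for $\alpha$ this is immediate: $\Psi$ sends $w_1^{\frac12}w_2^{-\frac12}+w_1^{-\frac12}w_2^{\frac12}$ to $X+X^{-1}$, matching $\Phi(\alpha)$. For $\beta$, by Lemma~\ref{lem:Coulombrep}, $\Psi\!\left(q^{\frac12}z^{-\frac12}F_1[1]\right)=q^{\frac12}z^{-\frac12}\cdot q^{-1}t^{\frac12}\bigl(V(X)\varpi+V(X^{-1})\varpi^{-1}\bigr)$; using $z=q^{-1}t$, the scalar is $q^{\frac12}\cdot q^{\frac12}t^{-\frac12}\cdot q^{-1}t^{\frac12}=1$, so this equals $\Phi(\beta)=V(X)\varpi+V(X^{-1})\varpi^{-1}$. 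For $\gamma$, $\Psi\!\left(q^2z^{-\frac12}w_1^{-\frac12}w_2^{-\frac12}F_1[x]\right)=q^2z^{-\frac12}\cdot q^{-3}t^{\frac12}\bigl(X^{-1}V(X)\varpi+XV(X^{-1})\varpi^{-1}\bigr)$ (taking $k=1$ in the $F_1[x^k]$ formula); the scalar is $q^2\cdot q^{\frac12}t^{-\frac12}\cdot q^{-3}t^{\frac12}=q^{-\frac12}$, matching $\Phi(\gamma)=q^{-\frac12}X^{-1}V(X)\varpi+q^{-\frac12}XV(X^{-1})\varpi^{-1}$. Thus $\Psi\circ(\text{proposed map})=\Phi$ on generators, hence on all of $\Sk_{A,\lambda}(S_{1,1})$; since $\Phi$ is injective (Proposition~\ref{prop:generatingoperators}) and $\Psi$ is injective (Lemma~\ref{lem:Coulombrep}), the proposed map is a well-defined injective $\mathbb{C}$-algebra homomorphism. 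Here ``well-defined'' needs a brief check that the images of $\alpha,\beta,\gamma$ satisfy the defining relations of Proposition~\ref{prop:presentation}; but this follows for free once we know the images lie in $\End\mathbb{C}_{q,t}(X)$ and agree with $\Phi(\alpha),\Phi(\beta),\Phi(\gamma)$, which satisfy those relations by Proposition~\ref{prop:presentation}, combined with injectivity of $\Psi$.

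It remains to prove surjectivity onto $\widetilde{\mathcal{A}}_{q,z}^{\mathbb{C}^*}/(D_1D_2-1)$. By Lemma~\ref{lem:invariantgenerators}, the target is generated by $w_1^{\frac12}w_2^{-\frac12}+w_1^{-\frac12}w_2^{\frac12}$ and the elements $w_1^{-\frac k2}w_2^{-\frac k2}E_n[f]$, $w_1^{-\frac k2}w_2^{-\frac k2}F_n[f]$ for homogeneous symmetric $f$ of degree $k$ and $n=1,2$. The $n=2$ generators map, under $\Psi$, to polynomials in $X+X^{-1}$ with $\mathbb{C}_{q,t}$-coefficients (last sentence of Lemma~\ref{lem:Coulombrep}), hence lie in the subalgebra generated by $\alpha$ modulo $(D_1D_2-1)$ — more care: they are polynomials in the image of $\alpha$, so they lie in the image of our map. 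For $n=1$, it suffices to hit $w_1^{-\frac k2}w_2^{-\frac k2}E_1[x^k]$ and $w_1^{-\frac k2}w_2^{-\frac k2}F_1[x^k]$ for all $k\in\mathbb{Z}$ (here $x^k$ spans homogeneous symmetric polynomials in one variable of degree $k$). Under $\Phi$, these correspond up to explicit scalars to the curve operators $\gamma_k$ of Lemma~\ref{lem:curveoperators}: indeed $\Phi(\gamma_k)=q^{-\frac k2}X^{-k}V(X)\varpi+q^{-\frac k2}X^kV(X^{-1})\varpi^{-1}$, which matches a scalar multiple of $\Psi(w_1^{-\frac k2}w_2^{-\frac k2}F_1[x^k])$ for the appropriate sign of $k$ (and $E_1[x^k]$ covers the other sign). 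Since every $\gamma_k$ lies in $\Sk_{A,\lambda}(S_{1,1})$ — it is an isotopy class of a curve, expressible via the skein relations as a polynomial in $\alpha,\beta,\gamma$ — its image under our map is in $\widetilde{\mathcal{A}}_{q,z}^{\mathbb{C}^*}/(D_1D_2-1)$ and equals the desired generator up to an invertible scalar in $\mathbb{C}[q^{\pm1},z^{\pm1}]$. Running over all $k$ and including $\alpha$ recovers all generators of Lemma~\ref{lem:invariantgenerators}, giving surjectivity.

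The main obstacle I anticipate is the bookkeeping of powers of $q$, $t$, and $z$ in matching $\Psi(F_1[x^k])$ and $\Psi(E_1[x^k])$ against $\Phi(\gamma_k)$ for all integers $k$ — in particular getting the scalar prefactors (the $q^{-2k-1}t^{\frac12}$ in $F_1[x^k]$ versus $q^{-1}t^{\frac12}$ in $E_1[x^k]$) to line up with the $q^{-k/2}$ appearing in Lemma~\ref{lem:curveoperators}, and making sure that the sign conventions $D_1\mapsto -q^{-1}X^{-2}\varpi$ do not introduce spurious signs in the odd-degree monopole operators. This is pure computation via Lemma~\ref{lem:Coulombrep}, but it is the step where an error would be easy to make. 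The structural parts — using faithfulness of $\Phi$ and injectivity of $\Psi$ to get well-definedness and injectivity of the candidate map, and using Lemma~\ref{lem:invariantgenerators} together with the curve operators $\gamma_k$ to get surjectivity — are straightforward.
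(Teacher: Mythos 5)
Your proposal is correct and follows essentially the same route as the paper: both compose the faithful polynomial representation $\Phi$ of Proposition~\ref{prop:generatingoperators} with the embedding $\Psi$ of Lemma~\ref{lem:Coulombrep} and show the two images coincide, using Lemma~\ref{lem:curveoperators} to match the curve operators $\gamma_k$ against the monopole generators of Lemma~\ref{lem:invariantgenerators} (with the $n=2$ operators absorbed as polynomials in the image of $\alpha$). The paper's proof is just a terser version of your argument, and your scalar bookkeeping checks out.
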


\begin{proof}
Let us write $\mathcal{M}$ for the $\mathbb{C}_{q,t}$-subalgebra of $\End\mathbb{C}_{q,t}(X)$ generated by $X+X^{-1}$ and the family of operators defined by the formula in Lemma~\ref{lem:curveoperators}. Then Lemma~\ref{lem:Coulombrep} gives an embedding $\widetilde{\mathcal{A}}_{q,z}^{\mathbb{C}^*}/(D_1D_2-1)\hookrightarrow\mathcal{M}$. Since the operators in $\mathcal{M}$ preserve $\mathbb{C}[X^{\pm1}]^{\mathbb{Z}_2}$, we can in fact regard this as an embedding $\Psi:\widetilde{\mathcal{A}}_{q,z}^{\mathbb{C}^*}/(D_1D_2-1)\hookrightarrow\End\mathbb{C}_{q,t}[X^{\pm1}]^{\mathbb{Z}_2}$. We also have the embedding $\Phi_+=\Phi:\Sk_{A,\lambda}(S_{1,1})\hookrightarrow\End\mathbb{C}_{q,t}[X^{\pm1}]^{\mathbb{Z}_2}$ of Proposition~\ref{prop:generatingoperators}. It follows from Lemma~\ref{lem:curveoperators} and Lemma~\ref{lem:Coulombrep} that the images of these embeddings coincide. In particular, we have 
\[
\Phi_+(\alpha)=\Psi(w_1^{\frac{1}{2}}w_2^{-\frac{1}{2}}+w_1^{-\frac{1}{2}}w_2^{\frac{1}{2}}), \quad
\Phi_+(\beta)=\Psi(q^{\frac{1}{2}}z^{-\frac{1}{2}}F_1[1]), \quad
\Phi_+(\gamma)=\Psi(q^2z^{-\frac{1}{2}}w_1^{-\frac{1}{2}}w_2^{-\frac{1}{2}}F_1[x]).
\]
This completes the proof.
\end{proof}

\begin{proposition}
\label{prop:skeinmonopole2}
There is a $\mathbb{C}$-algebra isomorphism 
\[
\Sk_{A,\lambda}(S_{1,1})\stackrel{\sim}{\longrightarrow}\widetilde{\mathcal{A}}_{q,z}^{\mathbb{C}^*}/(D_1D_2-1)
\]
mapping $A\mapsto q^{-\frac{1}{2}}$, $\lambda\mapsto z$, and 
\[
\alpha \mapsto w_1^{\frac{1}{2}}w_2^{-\frac{1}{2}}+w_1^{-\frac{1}{2}}w_2^{\frac{1}{2}}, \quad
\beta \mapsto q^{-1}z^{-\frac{1}{2}}w_1^{\frac{1}{2}}w_2^{\frac{1}{2}}F_1[x^{-1}], \quad
\gamma \mapsto q^{\frac{1}{2}}z^{-\frac{1}{2}}F_1[1].
\]
\end{proposition}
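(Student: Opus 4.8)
The plan is to deduce Proposition~\ref{prop:skeinmonopole2} from Proposition~\ref{prop:skeinmonopole1} by precomposing the isomorphism of the latter with the Dehn-twist automorphism $\tau_+$ of $\Sk_{A,\lambda}(S_{1,1})$. Recall from Section~\ref{sec:SL2Zsymmetries} that $\tau_+$ is a $\mathbb{C}[A^{\pm1},\lambda^{\pm1}]$-algebra automorphism with $\tau_+(\alpha)=\alpha$, $\tau_+(\beta)=\gamma_{-1}$, and $\tau_+(\gamma)=\beta$. Writing $\Theta$ for the isomorphism of Proposition~\ref{prop:skeinmonopole1}, the composite $\Theta\circ\tau_+$ is automatically a $\mathbb{C}$-algebra isomorphism $\Sk_{A,\lambda}(S_{1,1})\stackrel{\sim}{\longrightarrow}\widetilde{\mathcal{A}}_{q,z}^{\mathbb{C}^*}/(D_1D_2-1)$ sending $A\mapsto q^{-\frac12}$ and $\lambda\mapsto z$, so the only content of the proposition is the computation of its values on $\alpha$, $\beta$, $\gamma$. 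Equivalently, one can run the proof of Proposition~\ref{prop:skeinmonopole1} verbatim with the embedding $\Phi_+=\Phi$ of Proposition~\ref{prop:generatingoperators} replaced by $\Phi_-\coloneqq\Phi\circ\tau_+$; since $\tau_+$ is an automorphism, $\Phi_-$ is still an embedding whose image equals that of $\Phi_+$, hence equals the image of the embedding $\Psi:\widetilde{\mathcal{A}}_{q,z}^{\mathbb{C}^*}/(D_1D_2-1)\hookrightarrow\End\mathbb{C}_{q,t}[X^{\pm1}]^{\mathbb{Z}_2}$ from the proof of Proposition~\ref{prop:skeinmonopole1}.

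Next I would read off the easy cases: from $\tau_+(\alpha)=\alpha$ and $\tau_+(\gamma)=\beta$ it is immediate that $\Theta\circ\tau_+$ sends $\alpha\mapsto w_1^{\frac12}w_2^{-\frac12}+w_1^{-\frac12}w_2^{\frac12}$ and $\gamma\mapsto q^{\frac12}z^{-\frac12}F_1[1]$, matching the claimed formulas. For $\beta$ one must identify $\Theta(\gamma_{-1})$. Since the proof of Proposition~\ref{prop:skeinmonopole1} shows $\Psi\circ\Theta=\Phi$, we have $\Psi(\Theta(\gamma_{-1}))=\Phi(\gamma_{-1})$, and by Lemma~\ref{lem:curveoperators} this operator is $q^{\frac12}\bigl(XV(X)\varpi+X^{-1}V(X^{-1})\varpi^{-1}\bigr)$. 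On the other hand, substituting $k=-1$ into the formula of Lemma~\ref{lem:Coulombrep} for $w_1^{-\frac{k}{2}}w_2^{-\frac{k}{2}}F_1[x^k]$ and using $z=q^{-1}t$, one checks that $\Psi\bigl(q^{-1}z^{-\frac12}w_1^{\frac12}w_2^{\frac12}F_1[x^{-1}]\bigr)$ equals the same operator. As $\Psi$ is injective, $\Theta(\gamma_{-1})=q^{-1}z^{-\frac12}w_1^{\frac12}w_2^{\frac12}F_1[x^{-1}]$, which is precisely the claimed image of~$\beta$.

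The hard part will be nothing more than the scalar bookkeeping in this last comparison: one has to keep track of the powers of $q$ and~$t^{\frac12}$ and of the $w_1^{\frac12}w_2^{\frac12}$ prefactor through Lemma~\ref{lem:Coulombrep} at the negative value $k=-1$, and then eliminate~$t$ via $t=qz$. There is no conceptual obstacle, since faithfulness of the polynomial representation and the coincidence of images were already established in the proof of Proposition~\ref{prop:skeinmonopole1}. If one prefers to avoid the polynomial representation altogether, the same identity for $\Theta(\gamma_{-1})$ can instead be derived by expanding $\gamma_{-1}=\frac{\alpha\beta+\beta\alpha}{A+A^{-1}}-\gamma$ via the skein relation in Figure~\ref{subfig:resolution} and substituting the images of $\alpha$, $\beta$, $\gamma$ from Proposition~\ref{prop:skeinmonopole1}, at the price of a longer computation with the commutation relations of~$\widetilde{\mathcal{D}}_{q,z}$.
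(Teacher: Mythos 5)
Your proposal is correct and follows essentially the same route as the paper: the paper also defines $\Phi_-\coloneqq\Phi_+\circ\tau_+$ and verifies the three formulas by comparing Lemma~\ref{lem:curveoperators} with Lemma~\ref{lem:Coulombrep}; your scalar check that $\Psi\bigl(q^{-1}z^{-\frac12}w_1^{\frac12}w_2^{\frac12}F_1[x^{-1}]\bigr)=q^{\frac12}\bigl(XV(X)\varpi+X^{-1}V(X^{-1})\varpi^{-1}\bigr)=\Phi(\gamma_{-1})$ is exactly the computation the paper leaves to the reader.
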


\begin{proof}
Let $\Psi:\widetilde{\mathcal{A}}_{q,z}^{\mathbb{C}^*}/(D_1D_2-1)\hookrightarrow\End\mathbb{C}_{q,t}[X^{\pm1}]^{\mathbb{Z}_2}$ and $\Phi_+:\Sk_{A,\lambda}(S_{1,1})\hookrightarrow\End\mathbb{C}_{q,t}[X^{\pm1}]^{\mathbb{Z}_2}$ be the embeddings appearing in the proof of Proposition~\ref{prop:skeinmonopole1}, and define $\Phi_-\coloneqq\Phi_+\circ\tau_+$.  It follows from Lemma~\ref{lem:curveoperators} and Lemma~\ref{lem:Coulombrep} that the images of these embeddings coincide. In particular, one can check that 
\[
\Phi_-(\alpha)=\Psi(w_1^{\frac{1}{2}}w_2^{-\frac{1}{2}}+w_1^{-\frac{1}{2}}w_2^{\frac{1}{2}}), \quad
\Phi_-(\beta)=\Psi(q^{-1}z^{-\frac{1}{2}}w_1^{\frac{1}{2}}w_2^{\frac{1}{2}}F_1[x^{-1}]), \quad
\Phi_-(\gamma)=\Psi(q^{\frac{1}{2}}z^{-\frac{1}{2}}F_1[1]).
\]
This completes the proof.
\end{proof}

An interesting fact which we do not need here is that the algebra $\widetilde{\mathcal{A}}_{q,z}^{\mathbb{C}^*}/(D_1D_2-1)$ is isomorphic to $K^{(\mathrm{SL}_2\times\mathbb{C}^*)_{\mathcal{O}}\rtimes\mathbb{C}^*}(\mathcal{R}_{\mathrm{PGL}_2,N})$. Thus Propositions~\ref{prop:skeinmonopole1} and~\ref{prop:skeinmonopole2} give $K$-theoretic realizations of the full relative skein algebra of~$S_{1,1}$, and not just its $\mathbb{Z}_2$-invariant subalgebras.

\subsection{Invariant subalgebras}

Finally, we come to our main result. In the following, we write~$\xi_1$ and~$\xi_3$ for the involutions of $\Sk_{A,\lambda}(S_{1,1})$ defined in Section~\ref{sec:StatementOfTheMainResult} and write $N$ for the $\mathrm{PGL_2}$-representation considered in Section~\ref{sec:MonopoleOperators}.

\begin{theorem}
\label{thm:main}
There are $\mathbb{C}$-algebra isomorphisms 
\[
\Sk_{A,\lambda}(S_{1,1})^{\xi_1}\cong K^{(\mathrm{PGL}_2\times\mathbb{C}^*)_{\mathcal{O}}\rtimes\mathbb{C}^*}(\mathcal{R}_{\mathrm{PGL}_2,N})\cong\Sk_{A,\lambda}(S_{1,1})^{\xi_3}.
\]
\end{theorem}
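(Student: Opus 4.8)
The plan is to deduce Theorem~\ref{thm:main} by combining the two monopole realizations of the \emph{full} relative skein algebra (Propositions~\ref{prop:skeinmonopole1} and~\ref{prop:skeinmonopole2}) with the description of the $\mathrm{PGL}_2$ Coulomb branch from Proposition~\ref{prop:PGL2monopole}. The key observation is that $\widetilde{\mathcal{A}}_{q,z}^{\mathbb{C}^*}/(D_1D_2-1)$ is \emph{bigger} than $\mathcal{A}_{q,z}^{\mathbb{C}^*}/(D_1D_2-1)$: the former is generated over $\mathbb{C}[q^{\pm1},z^{\pm1}]$ by $w_1^{\frac12}w_2^{-\frac12}+w_1^{-\frac12}w_2^{\frac12}$ and the monopole operators $w_1^{-k/2}w_2^{-k/2}E_n[f]$, $w_1^{-k/2}w_2^{-k/2}F_n[f]$ of Lemma~\ref{lem:invariantgenerators}, involving half-integer powers of the $w_i$, whereas $\mathcal{A}_{q,z}$ uses only integer powers. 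So the first step is to identify $\mathcal{A}_{q,z}^{\mathbb{C}^*}/(D_1D_2-1)$ as a distinguished subalgebra of $\widetilde{\mathcal{A}}_{q,z}^{\mathbb{C}^*}/(D_1D_2-1)$ and then to transport this inclusion across the isomorphisms of Propositions~\ref{prop:skeinmonopole1} and~\ref{prop:skeinmonopole2} to obtain two subalgebras of $\Sk_{A,\lambda}(S_{1,1})$, which I claim are exactly $\Sk_{A,\lambda}(S_{1,1})^{\xi_1}$ and $\Sk_{A,\lambda}(S_{1,1})^{\xi_3}$.

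More concretely, I would introduce the $\mathbb{Z}_2$-grading on $\widetilde{\mathcal{D}}_{q,z}$ (hence on $\widetilde{\mathcal{A}}_{q,z}^{\mathbb{C}^*}/(D_1D_2-1)$) by the parity of the total $w$-exponent modulo~$1$, i.e.\ the eigenvalue of the automorphism $w_i^{1/2}\mapsto -w_i^{1/2}$; the even part is precisely $\mathcal{A}_{q,z}^{\mathbb{C}^*}/(D_1D_2-1)\cong K^{(\mathrm{PGL}_2\times\mathbb{C}^*)_{\mathcal{O}}\rtimes\mathbb{C}^*}(\mathcal{R}_{\mathrm{PGL}_2,N})$ by Proposition~\ref{prop:PGL2monopole}. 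Under the isomorphism of Proposition~\ref{prop:skeinmonopole1}, $\alpha\mapsto w_1^{\frac12}w_2^{-\frac12}+w_1^{-\frac12}w_2^{\frac12}$ is odd, $\beta\mapsto q^{\frac12}z^{-\frac12}F_1[1]$ is odd, and $\gamma\mapsto q^2 z^{-\frac12}w_1^{-\frac12}w_2^{-\frac12}F_1[x]$ is even; hence this automorphism pulls back to the algebra automorphism $\alpha\mapsto-\alpha$, $\beta\mapsto-\beta$, $\gamma\mapsto\gamma$, which is exactly $\xi_3$. Therefore the even subalgebra corresponds to $\Sk_{A,\lambda}(S_{1,1})^{\xi_3}$, giving the isomorphism $\Sk_{A,\lambda}(S_{1,1})^{\xi_3}\cong K^{(\mathrm{PGL}_2\times\mathbb{C}^*)_{\mathcal{O}}\rtimes\mathbb{C}^*}(\mathcal{R}_{\mathrm{PGL}_2,N})$. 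Running the identical argument with Proposition~\ref{prop:skeinmonopole2} in place of Proposition~\ref{prop:skeinmonopole1}: there $\alpha$ and $\gamma\mapsto q^{\frac12}z^{-\frac12}F_1[1]$ are odd while $\beta\mapsto q^{-1}z^{-\frac12}w_1^{\frac12}w_2^{\frac12}F_1[x^{-1}]$ is even, so the pulled-back parity automorphism is $\alpha\mapsto-\alpha$, $\beta\mapsto\beta$, $\gamma\mapsto-\gamma$, which is $\xi_1$; hence $\Sk_{A,\lambda}(S_{1,1})^{\xi_1}\cong K^{(\mathrm{PGL}_2\times\mathbb{C}^*)_{\mathcal{O}}\rtimes\mathbb{C}^*}(\mathcal{R}_{\mathrm{PGL}_2,N})$ as well. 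Chaining the two gives the full statement.

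The step requiring the most care is verifying the parity assignments of the generators: I must confirm, using Definition~\ref{def:monopole} and Lemma~\ref{lem:invariantgenerators}, that $F_1[1]$ (a degree-zero $E$/$F$ operator, so accompanied by no $w^{-k/2}$ prefactor) genuinely sits in the \emph{odd} component — this is because $F_1[1]=\sum_r \prod_{s\neq r}\frac{1-qzw_sw_r^{-1}}{1-w_rw_s^{-1}}\,D_r^{-1}$ and after clearing denominators one finds an overall odd half-integer $w$-exponent — and symmetrically that $w_1^{-1/2}w_2^{-1/2}F_1[x]$ and $w_1^{1/2}w_2^{1/2}F_1[x^{-1}]$ are even. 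I would also need to check that this $\mathbb{Z}_2$-grading is actually a grading by a group of \emph{algebra} automorphisms that descends to the quotient by $(D_1D_2-1)$ and commutes with the $\mathbb{C}^*$-invariants — routine, since $D_1D_2-1$ is even and $\mathbb{C}^*$ acts by the $w$-degree, which is compatible. A final bookkeeping point: one should confirm that the even subalgebra of $\widetilde{\mathcal{A}}_{q,z}^{\mathbb{C}^*}/(D_1D_2-1)$ is \emph{exactly} $\mathcal{A}_{q,z}^{\mathbb{C}^*}/(D_1D_2-1)$ and not something larger, which follows from Lemma~\ref{lem:invariantgenerators} together with the observation that a product of generators has even $w$-parity precisely when it lies in the integer-power subalgebra $\mathcal{D}_{q,z}$. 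Alternatively — and perhaps more cleanly for the write-up — I would invoke Lemma~\ref{lem:Coulombrep} to realize everything inside $\End\mathbb{C}_{q,t}[X^{\pm1}]^{\mathbb{Z}_2}$, where the parity automorphism becomes $X\mapsto -X$ and the even/odd decomposition is transparent from the explicit operator formulas in Lemma~\ref{lem:curveoperators} and Proposition~\ref{prop:generatingoperators}; this avoids all manipulation of denominators in $\widetilde{\mathcal{D}}_{q,z}$.
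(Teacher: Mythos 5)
Your overall strategy is the paper's: define a parity involution on $\widetilde{\mathcal{A}}_{q,z}^{\mathbb{C}^*}/(D_1D_2-1)$ whose fixed subalgebra is $\mathcal{A}_{q,z}^{\mathbb{C}^*}/(D_1D_2-1)\cong K^{(\mathrm{PGL}_2\times\mathbb{C}^*)_{\mathcal{O}}\rtimes\mathbb{C}^*}(\mathcal{R}_{\mathrm{PGL}_2,N})$ (Proposition~\ref{prop:PGL2monopole}), transport it through Propositions~\ref{prop:skeinmonopole1} and~\ref{prop:skeinmonopole2}, and recognize the pullbacks as $\xi_1$ and $\xi_3$. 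But the execution has two concrete errors. First, the involution you actually write down, $w_i^{1/2}\mapsto -w_i^{1/2}$ for both $i$ simultaneously, is the element $-1$ of the very $\mathbb{C}^*$ whose invariants are being taken: it fixes $w_1^{1/2}w_2^{-1/2}$ and hence acts as the identity on all of $\widetilde{\mathcal{A}}_{q,z}^{\mathbb{C}^*}$, so its ``even part'' is everything, not $\mathcal{A}_{q,z}^{\mathbb{C}^*}/(D_1D_2-1)$. The involution that does the job negates the \emph{ratio} $w_1^{1/2}w_2^{-1/2}$ while fixing $q$, $z$ and the $D_r$ --- equivalently, it negates exactly one of the two square roots; by Lemma~\ref{lem:relateambientrings} every element of $\widetilde{\mathcal{D}}_{q,z}^{\mathbb{C}^*}$ is a Laurent polynomial in the $D_r$ with coefficients rational in this ratio, and the fixed part is exactly where only integer powers of $w_1w_2^{-1}$ occur.

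Second, and in consequence, your parity assignments are backwards. From Definition~\ref{def:monopole}, $F_1[1]=\sum_r\prod_{s\neq r}\frac{1-qzw_sw_r^{-1}}{1-w_rw_s^{-1}}D_r^{-1}$ involves only the integer powers $w_sw_r^{-1}=(w_1^{1/2}w_2^{-1/2})^{\mp2}$; there is no half-integer exponent to be found after clearing denominators, so $F_1[1]$ is \emph{even}, not odd. Conversely $w_1^{-1/2}w_2^{-1/2}F_1[x]$ and $w_1^{1/2}w_2^{1/2}F_1[x^{-1}]$ carry a single factor $(w_1^{1/2}w_2^{-1/2})^{\pm1}$ in each summand and are \emph{odd}. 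With the correct parities, Proposition~\ref{prop:skeinmonopole1} ($\alpha$ odd, $\beta$ even, $\gamma$ odd) intertwines the involution with $\xi_1$, and Proposition~\ref{prop:skeinmonopole2} with $\xi_3$ --- the opposite of your matching. Your own proposed sanity check inside $\End\mathbb{C}_{q,t}[X^{\pm1}]^{\mathbb{Z}_2}$ would have caught this: under $X\mapsto -X$ one has $V(-X)=V(X)$, so the operator $V(X)\varpi+V(X^{-1})\varpi^{-1}$ representing $\beta$ in Proposition~\ref{prop:generatingoperators} is even while those for $\alpha$ and $\gamma$ are odd, which is $\xi_1$, not $\xi_3$. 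Because the theorem is symmetric under exchanging $\xi_1$ and $\xi_3$, the mislabelling does not make the final statement false, but as written the argument rests on an involution that acts trivially and on incorrect parity computations, so both points need to be repaired before the proof stands.
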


\begin{proof}
As in the proof of Lemma~\ref{lem:relateambientrings}, we can write any element of $\widetilde{\mathcal{D}}_{q,z}^{\mathbb{C}^*}$ as a Laurent polynomial in the variables~$D_r$ whose coefficients are rational expressions in~$q$,~$z$, and~$w_1^{\frac{1}{2}}w_2^{-\frac{1}{2}}$. Consider the involution of this algebra that fixes $q,$~$z$, and the~$D_r$ and maps $w_1^{\frac{1}{2}}w_2^{-\frac{1}{2}}\mapsto-w_1^{\frac{1}{2}}w_2^{-\frac{1}{2}}$. It induces an involution of the quotient $\widetilde{\mathcal{D}}_{q,t}^{\mathbb{C}^*}/(D_1D_2-1)$ and its subalgebra $\widetilde{\mathcal{A}}_{q,z}^{\mathbb{C}^*}/(D_1D_2-1)$. Let us write~$\epsilon$ for the involution of the latter. The isomorphism in Proposition~\ref{prop:skeinmonopole1} intertwines $\xi_1$ and $\epsilon$, so we have 
\[
\Sk_{A,\lambda}(S_{1,1})^{\xi_1}\cong\left(\widetilde{\mathcal{A}}_{q,z}^{\mathbb{C}^*}/(D_1D_2-1)\right)^{\epsilon}\cong\mathcal{A}_{q,z}^{\mathbb{C}^*}/(D_1D_2-1).
\]
Similarly, the isomorphism in Proposition~\ref{prop:skeinmonopole2} intertwines $\xi_3$ and~$\epsilon$, so we have 
\[
\Sk_{A,\lambda}(S_{1,1})^{\xi_3}\cong\mathcal{A}_{q,z}^{\mathbb{C}^*}/(D_1D_2-1).
\]
The theorem now follows from Proposition~\ref{prop:PGL2monopole}. 
\end{proof}

In~\cite{AS24} we formulated a general conjecture relating the algebra $\Sk_{A,\lambda}(S)$ for a surface $S=S_{g,n}$ of genus $g\leq1$ with $n>0$ punctures to a quantized $K$-theoretic Coulomb branch. The gauge group used to define this quantized Coulomb branch is isomorphic to $\mathrm{SL}_2^{3g-3+n}$. In view of Theorem~\ref{thm:main}, we expect that $\Sk_{A,\lambda}(S)$ has other dual realizations in terms of quantized Coulomb branches with possibly nonisomorphic gauge groups. Understanding this precisely is an interesting problem, which we leave for future work.

\bibliographystyle{amsplain}

\end{document}